\documentclass[12pt,reqno,oneside]{amsart}
 \usepackage{verbatim,amsmath,amssymb,cite,xspace}
\usepackage{color,cite,graphicx}


\hoffset        0.0in
\voffset        0.0pt
\evensidemargin 0.0in
\oddsidemargin  0.0in
\topmargin      0.0in
\headheight     12pt
\headsep        24pt
\textheight     8.5in
\textwidth      6.5in
\marginparsep   0.0in
\marginparwidth 0.0in
\footskip       0.5in


\theoremstyle{plain}
\newtheorem{theorem}{Theorem}[section]
\newtheorem{lemma}{Lemma}[section]

\theoremstyle{definition}

\theoremstyle{remark}
\newtheorem{remark}{Remark}[section]

\newtheorem{claim}{{\bf Claim}}[section]

\usepackage{color}

\usepackage{amssymb}
\usepackage{amssymb, latexsym}
\usepackage{amsmath}
\usepackage{amscd}
\usepackage{enumerate}
\usepackage{amsfonts}
\usepackage{graphicx}
\usepackage{epsfig}


\newcommand{\st}[2]{L^{#1}_tL^{#2}_x}

\newcommand{\la}{\langle}
\newcommand{\ra}{\rangle}
\newcommand{\hl}{\dot{H}^1\times L^2}
\newcommand{\vp}{\phi}
\newcommand{\tl}{\tilde{\lambda}}
\newcommand{\tg}{\tilde{\gamma}}

\newcommand{\OR}{\overrightarrow}
\newcommand{\M}{\mathcal{M}}
\newcommand{\HL}{\dot{H}^1\times L^2}
\newcommand{\RHL}{\dot{H}_{{\rm rad}}^1\times L_{{\rm rad}}^2}
\def\R{\mathbb{R}}

\usepackage{color}

\renewcommand{\R}{\mathbb{R}}

\numberwithin{equation}{section}

\begin{document}
\title[]{Generic and non-generic behavior of solutions to defocusing energy critical wave equation with potential in the radial case}
\author{Hao Jia, Baoping Liu, Wilhelm Schlag, Guixiang Xu}

\begin{abstract}
In this paper, we continue our study  \cite{JiaLiuXu} on the long time dynamics of radial solutions to defocusing energy critical wave equation with a trapping radial potential in $3+1$ dimensions. For generic radial potentials (in the topological sense) there are only finitely many steady states which might be either stable or unstable.  We first observe that there can be {\it stable} excited states (i.e., a steady state which is not the ground state)  if the potential is large and attractive, although all small excited states are unstable. 
We prove that the set of initial data for which solutions 
scatter to  
any one unstable excited state forms a finite co-dimensional connected $C^1$ manifold in energy space. This amounts to the construction  of the global path-connected, and unique, center-stable manifold
associated  with, but not necessarily close to, any unstable steady state. In particular, the set of data for which solutions scatter to unstable states has empty interior in the energy space, 
and generic radial solutions scatter to one of the stable steady states.  Our main tools are (i) near any given finite energy radial initial data $(u_{0},u_{1})$ for which the solution $u(t)$ scatters to some unstable steady state $\phi$ we construct a $C^1$ manifold containing $(u_{0},u_{1})$ with the
property that any solution starting on the manifold scatters to $\phi$; moreover, any solution remaining near the manifold for all positive times lies on the manifold (ii) an exterior energy inequality from \cite{DKM,DKM1,JiaLiuXu}. The latter is used to obtain a result in the spirit  of the one-pass theorem~\cite{NS}, albeit with completely different techniques. 
\end{abstract}

\address{Hao Jia, Baoping Liu, Wilhelm Schlag: University of Chicago, Department of Mathematics, 5734 South University Avenue, Chicago, IL 60636, U.S.A.}
\email{jiahao@math.uchicago.edu, baoping@math.uchicago.edu, schlag@math.uchicago.edu}
\address{Guixiang Xu: Institute of Applied Physics and Computational Mathematics, Beijing, China}
\email{xu\_guixiang@iapcm.ac.cn }
 
\thanks{2010 \textit{ Mathematics Subject Classification.}   35L05, 35B40}  
\thanks{The third author was partially supported by the NSF through DMS-1160817.  The fourth author was partially supported by the NSF of China (No. 11171033, No. 11231006).}

\maketitle

\section{Introduction}

Fix $\beta>2$. Define
\begin{equation*}
Y:=\left\{V\in C(\R^3):\, V\,\,{\rm radial\,\,and}\,\,\sup_{x\in \R^3}(1+|x|)^{\beta}|V(x)|<\infty\right\},
\end{equation*}
with the natural norm
\begin{equation*}
\|V\|_Y:=\sup_{x\in \R^3}(1+|x|)^{\beta}|V(x)|.
\end{equation*}
We study solutions to 
\begin{equation}\label{eq:mainequation}
\partial_{tt}u-\Delta u-Vu+u^5=0,
\end{equation} 
with initial data $\OR{u}(0)=(u_0,u_1)\in \HL(\R^3)$. Since for a short time the term $Vu$ can be considered as a small perturbation, by adaptations of results in \cite{BaGe,Gri1,Gri2,Struwe} we know for any initial data $(u_0,u_1)\in \dot{H}^1\times L^2(\R^3)$, there exists a unique solution $${u}(t)\in C([0,\infty),\dot{H}^1)\cap L^5_tL^{10}_x([0,T)\times \R^3)$$ for any $T<\infty$ to equation (\ref{eq:mainequation}). Moreover, the energy
\begin{equation*}
\mathcal{E}(\overrightarrow{u}(t)):=\int_{\R^3}\Big[\frac{|\nabla u|^2}{2}+\frac{(\partial_tu)^2}{2}-\frac{Vu^2}{2}+\frac{u^6}{6}\Big](x,t)\,dx
\end{equation*}
is constant for all time. Our main interest in this work is the long time behavior of $u(t)$ under radial symmetry. If the operator $-\Delta -V$ has negative eigenvalues, then equation (\ref{eq:mainequation}) admits a nontrivial ground state $Q>0$, which is the global minimizer of 
\begin{equation*}
J(\phi):=\int_{\R^3}\Big[\frac{|\nabla \phi|^2}{2}-\frac{V\phi^2}{2}+\frac{\phi^6}{6}\Big]\, dx. 
\end{equation*}
It has negative energy. The linearized operator around $Q$ 
\begin{equation*}
\mathcal{L}_Q:=-\Delta-V+5Q^4
\end{equation*}
has no negative or zero eigenvalues, and no zero resonance. Consequently by well-known dispersive estimates for $\mathcal{L}_Q$ we know $Q$ is asymptotically stable: solutions with initial data close to $(Q,0)$ will scatter to $(Q,0)$. We remark that in our work by generic choice of potentials, all steady states are hyperbolic~\footnote{This means that the linearized operator around the steady state has neither zero eigenvalues nor zero resonance. } and consequently spectral stability implies asymptotic stability by well-known dispersive estimates for the associated linearized operator. Hence we will not distinguish the two notions below. In addition to the ground states $Q$ and $-Q$, there can be a number of  ``excited states"  with higher energies (see Appendix~A of \cite{JiaLiuXu}), which are changing sign steady states to equation (\ref{eq:mainequation}). Surprisingly, some of the excited states can be stable as well, although all small excited states can be shown to be unstable (see Section~\ref{sec:2} below). More precisely we have the following result. 

\begin{theorem}
There exists an open set $\mathcal{O}\subset Y$, such that for $V\in \mathcal{O}$, there exists an excited state $\phi$ to equation (\ref{eq:mainequation}) which is stable.
\end{theorem}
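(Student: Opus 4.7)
The plan is to exhibit one explicit radial potential $V\in Y$ for which \eqref{eq:mainequation} admits a sign-changing steady state whose linearization is strictly positive, and then to open up $\mathcal{O}$ by a persistence argument based on the implicit function theorem.

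\textbf{Step 1 (two-well model).} Choose a smooth, non-negative, radial bump $W$ supported in the annulus $\{1/2<|x|<1\}$. For a large parameter $\mu>0$ and radii $0<r_1\ll r_2$, set
\[
V_1(x):=\mu\,W(x/r_1),\qquad V_2(x):=\mu\,W(x/r_2),\qquad V:=V_1+V_2,
\]
so that $V\in Y$ is a radial potential with two deep attractive wells supported in disjoint annuli. For each $i$, the equation $-\Delta Q_i-V_i Q_i+Q_i^5=0$ has a unique positive radial ground state $Q_i$, and (after a generic choice of $\mu$) the linearization $\mathcal{L}_i:=-\Delta-V_i+5Q_i^4$ has a positive spectral gap on $\dot{H}^1_{\mathrm{rad}}$.

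\textbf{Step 2 (gluing).} Set $\Phi:=Q_1-Q_2$. The residual $R(\Phi):=-\Delta\Phi-V\Phi+\Phi^5$ consists only of cross terms: nonlinear mixings $Q_1^a Q_2^b$ with $a+b=5$, $a,b\geq 1$, together with the cross-potential errors $-V_2 Q_1-V_1 Q_2$. Since each $Q_i$ is concentrated near the support of $V_i$ and decays outside, these terms become small once $r_2/r_1$ is large enough. Writing an exact solution as $\phi=\Phi+\eta$ yields the fixed-point equation $\mathcal{L}_\Phi\,\eta=-R(\Phi)+N(\eta)$, where $\mathcal{L}_\Phi:=-\Delta-V+5\Phi^4$ and $N$ collects the nonlinear remainder. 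Because the supports of $Q_1$ and $Q_2$ are essentially disjoint, $5\Phi^4$ is close to $5Q_1^4+5Q_2^4$ on the region where $V$ is large; one then checks that the quadratic form of $\mathcal{L}_\Phi$ is uniformly coercive on $\dot{H}^1_{\mathrm{rad}}$, and Banach's contraction principle in a small ball yields a unique $\eta$. The resulting $\phi=\Phi+\eta$ is an exact, sign-changing, radial steady state of \eqref{eq:mainequation} and is distinct from the positive ground state (which is constructed in the same way from the ansatz $Q_1+Q_2$).

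\textbf{Step 3 (stability and openness).} The operator $\mathcal{L}_\phi=-\Delta-V+5\phi^4$ differs from $\mathcal{L}_\Phi$ by a small perturbation, hence retains a positive spectral gap; a generic choice of $\mu$ rules out a zero eigenvalue or zero resonance. By the dispersive estimates for $\mathcal{L}_\phi$ recalled in the introduction, $\phi$ is then asymptotically stable among radial perturbations. Finally, the construction persists under small perturbations of $V$ in $Y$: at $(V,\phi)$ the linearization $\mathcal{L}_\phi$ is an isomorphism of $\dot{H}^1_{\mathrm{rad}}$, so the implicit function theorem produces a nearby sign-changing steady state $\phi(\widetilde V)$ for every $\widetilde V$ close to $V$, and the positivity of $\mathcal{L}_{\phi(\widetilde V)}$ is preserved. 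Hence the set $\mathcal{O}$ of admissible potentials is a nonempty open subset of $Y$.

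\textbf{Main obstacle.} The principal difficulty is controlling the interaction between the two wells quantitatively. Because the defocusing critical exponent $p=5$ forces only algebraic decay of $Q_i$ away from its well (unlike the exponential localization available in subcritical or linear bound-state gluings), the ``weak coupling'' must be quantified through weighted tail estimates on $Q_1$ and $Q_2$. One must establish uniform coercivity of $\mathcal{L}_\Phi$ and uniform bounds on $\mathcal{L}_\Phi^{-1}$ with constants independent of $r_1,r_2$, strong enough to absorb $R(\Phi)$ in the contraction and robust enough to feed into the spectral perturbation argument of Step~3.
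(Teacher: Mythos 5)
Your high-level strategy --- superposing two widely separated bumps of opposite sign, correcting by a contraction mapping, and then opening up $\mathcal{O}$ via a perturbation/implicit-function argument --- is the same as the paper's. However, your execution leaves the crux of the matter open. In Step~2 you assert that the quadratic form of $\mathcal{L}_\Phi=-\Delta-V+5\Phi^4$ is \emph{uniformly} coercive on $\dot{H}^1_{\rm rad}$, and this is exactly the point you flag at the end as the ``main obstacle''; it is stated but not established. Decomposing
\[
-V+5\Phi^4=\bigl(-V_1+5Q_1^4\bigr)+\bigl(-V_2+5Q_2^4\bigr)+(\text{cross terms}),
\]
the diagonal pieces $-V_i+5Q_i^4$ are the single-well linearization potentials, which are \emph{not} sign-definite: $-V_i$ is attractive and $5Q_i^4$ is repulsive, and one must argue that the net effect is coercive, with a coercivity constant that does not degenerate as the scales $r_1,r_2$ are sent to their limiting regime. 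In addition, because the critical exponent yields only algebraic ($\sim 1/|x|$) tails for $Q_i$, the cross-term smallness requires quantitative weighted estimates you do not supply. These are genuine gaps, not details.

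The paper avoids both difficulties by a special choice of potential. It takes $V_1:=2W^4$ with $W=(1+|x|^2/3)^{-1/2}$ the Aubin--Talenti bubble, so that $W$ is \emph{exactly} a steady state of the single-bubble problem and, crucially, $-V_1+5W^4=3W^4\ge 0$. Setting $V_{1\lambda}(x)=\lambda^2V_1(\lambda x)$, $W_\lambda(x)=\lambda^{1/2}W(\lambda x)$ and $\phi\approx W-W_\lambda$, the two-bubble linearization becomes
\[
L_\lambda=-\Delta+3W^4+3W_\lambda^4-20WW_\lambda^3-20W_\lambda W^3+30W^2W_\lambda^2,
\]
whose diagonal part $-\Delta+3W^4+3W_\lambda^4$ is manifestly $\ge -\Delta$, and whose cross terms are explicit functions of $W$ and $W_\lambda$ tending to $0$ in $L^{3/2}$ as $\lambda\to\infty$. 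This yields $(L_\lambda\varphi,\varphi)\ge\tfrac12\|\varphi\|_{\dot H^1}^2$ for large $\lambda$ essentially for free, and then the contraction, sign-change, spectral stability, and $Y$-openness follow. The special choice $V_1=2W^4$ is the concrete idea that closes the gap in your proposal: it trivializes the diagonal coercivity and makes the cross terms explicitly computable, so that no abstract uniform-in-scale spectral hypothesis needs to be invoked.
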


Roughly speaking, this is due to the stabilizing effect of the nonlinearity as a result of its defocusing nature, and the instability is mainly due to the potential. Hence if the excited state is large, the nonlinear stabilizing effect may dominate and the resulting dynamics around that excited state could become stable.

\smallskip 

Due to the presence of many steady states, in general the global dynamics can be quite complicated, even in the radially symmetric setting. 
\cite{JiaLiuXu} establishes  the following result characterizing the long time dynamics of  radial finite energy solutions.

\begin{theorem}\label{th:JLX}
Let $(u_0,u_1)\in \dot{H}^1\times L^2$ be radial. 
Denote 
\begin{equation}
\Sigma=\{(\phi,0)|\, (\phi,0) \,\,{\rm is\,\,a\,\,radial\,\,steady\,\,state\,\,solution\,\,to\,\,equation\,\,(\ref{eq:mainequation})}\}.
\end{equation}
Let $u\in C([0,\infty),\dot{H}^1)\cap L_t^5L_x^{10}([0,T)\times \R^3)$ for any $T<\infty$ be the unique solution to equation (\ref{eq:mainequation}) with initial data $(u(0),\partial_tu(0))=(u_0,u_1)$. Then for some radial finite energy solution $(u^L,\partial_tu^L)$ to the linear wave equation without potential \footnote{We often call such linear solutions {\it free radiation}.}
\begin{equation*}
({\rm LW})\,\,\,\,\partial_{tt}u-\Delta u=0,
\end{equation*}
we have
\begin{equation}
\lim_{t\to \infty}\inf_{(\phi,0)\in \Sigma}\|(u(t),\partial_tu(t))-(\phi,0)-(u^L(t),\partial_tu^L(t))\|_{\dot{H}^1\times L^2}=0.
\end{equation}
Moreover, for $V$ in a dense open set $\Omega\subset Y$, there are only finitely many radial steady states to equation (\ref{eq:mainequation}). In this case, there exist a steady state solution $(\phi,0)$ and some solution $(u^L,\partial_tu^L)$ to the linear wave equation without potential, such that
\begin{equation}
\lim_{t\to \infty}\|(u(t),\partial_tu(t))-(\phi,0)-(u^L(t),\partial_tu^L(t))\|_{\dot{H}^1\times L^2}=0.
\end{equation}
\end{theorem}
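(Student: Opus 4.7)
\textbf{Proof plan for Theorem \ref{th:JLX}.} My approach would be a concentration--compactness / rigidity argument in the spirit of Duyckaerts--Kenig--Merle, adapted to the presence of the trapping potential $V$. First, since the nonlinearity is defocusing and $V$ is a short-range trapping perturbation, the conserved energy $\mathcal{E}$ is coercive modulo the finite-dimensional subspace spanned by the negative eigenfunctions of $-\Delta-V$. Combining this with energy conservation yields a uniform a priori bound $\sup_{t\geq 0}\|\vec u(t)\|_{\HL}\leq C(\mathcal{E}(u_0,u_1))$, so the solution is forward global and bounded in $\HL$, and has uniformly bounded Strichartz norms on every compact time interval.

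Next, let $\vec u^L$ denote the outgoing radiation field of $u$: the unique free radial finite-energy wave such that $\vec u(t)-\vec u^L(t)$ is asymptotically concentrated on compact sets (it exists by dispersion in the exterior of the light cone). For any sequence $t_n\to\infty$, I would run a Bahouri--Gérard profile decomposition on $\vec u(t_n)-\vec u^L(t_n)$, centered at the origin by radial symmetry. By construction the residual has no outgoing free-wave component, so only two kinds of profiles survive: a non-rescaling profile $(\phi,0)$ at scale one, which ``sees'' the potential, and possibly rescaled free-wave profiles at scales $\lambda_n^j\to 0$ or $\infty$ (which asymptotically do not feel $V$ because $\lambda^2 V(\lambda\,\cdot)\to 0$ in the relevant norms). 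The rescaled profiles are globally scattering as free defocusing critical waves (Grillakis), hence they are outgoing radiation and must vanish by the no-radiation property of the residual. Thus $\vec u(t_n)-\vec u^L(t_n)\to(\phi,0)$ in $\HL$, and the exterior energy lower bound from \cite{DKM,DKM1,JiaLiuXu} forces $(\phi,0)$ to solve the stationary equation $-\Delta\phi-V\phi+\phi^5=0$, i.e.\ $(\phi,0)\in\Sigma$. Since the sequence was arbitrary, the distance from $\vec u(t)-\vec u^L(t)$ to $\Sigma$ tends to zero as $t\to\infty$, which is the first assertion.

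For the generic statement I would first show that $\Sigma$ is finite on a dense open $\Omega\subset Y$. Steady states are critical points of the coercive functional $J$; a generic perturbation of $V$ makes the linearized operator $\mathcal{L}_\phi=-\Delta-V+5\phi^4$ hyperbolic at each steady state (a transversality/Sard-type argument on the bifurcation diagram, as in Appendix~A of \cite{JiaLiuXu}), and hyperbolicity prevents accumulation of critical points via the implicit function theorem. Hence $|\Sigma|<\infty$ and $\Sigma$ is discrete in $\HL$. Since $t\mapsto\vec u(t)-\vec u^L(t)$ is continuous in $\HL$ and its distance to the finite set $\Sigma$ tends to zero, the curve must ultimately lie in the $\varepsilon$-neighborhood of a single element $(\phi^*,0)\in\Sigma$, where $\varepsilon$ is half the minimum pairwise $\HL$-distance between distinct elements of $\Sigma$. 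This upgrades the $\liminf$ to an honest limit and yields the second half.

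The main obstacle is running the exterior-energy / channel-of-energy argument in the presence of the potential. The classical DKM estimate is scale-invariant and exploits the group structure of the free wave; here $V$ breaks scale invariance and could in principle support trapped internal modes around $\phi$. The extension in \cite{JiaLiuXu} works in a large exterior region $\{|x|>R\}$ where $V$ is negligible, transmits the resulting information inward using finite speed of propagation, and separately tracks the contributions of rescaled versus non-rescaled profiles. That analysis is the heart of the argument and the only genuinely non-standard ingredient.
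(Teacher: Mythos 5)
This theorem is quoted by the present paper from the companion work \cite{JiaLiuXu}; no proof is given here, only the statement and a brief description of the strategy. So I can only compare your outline against what the authors say about that proof and against what the strategy would actually require. Your sketch identifies the right framework (profile decomposition, outgoing radiation field, channel-of-energy / exterior-energy inequalities, rigidity for pre-compact trajectories, generic finiteness via transversality), and your route to upgrading the $\inf$-limit to an honest limit once $\Sigma$ is finite and discrete is correct.

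That said, two places in your sketch are genuinely under-argued and would be the real work. First, the passage ``the rescaled free-wave profiles must vanish by the no-radiation property of the residual'' is too quick. The quantity $\vec u(t_n)-\vec u^L(t_n)$ is just a sequence of data; ``no outgoing radiation'' is not a property a sequence of data has by fiat, and ruling out nontrivial rescaled or time-translated profiles requires running the nonlinear evolution forward/backward, invoking the orthogonality of the profile decomposition, and then applying the exterior-energy lower bound from \cite{DKM,DKM1,JiaLiuXu} to show each surviving nontrivial profile would inject additional energy past the forward light cone, in excess of what $\vec u^L$ already accounts for. This bookkeeping (and the fact that the exterior-energy estimate survives the presence of $V$) is the technical heart, and it is not a corollary of the mere existence of $\vec u^L$. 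Second, your ``rigidity'' step — that the scale-one limit $(\phi,0)$ solves the stationary equation — is again exactly the channel-of-energy argument: one must show any forward- or backward-global radial solution with a pre-compact trajectory is a steady state, using that a non-stationary solution sends a fixed amount of energy to $\{|x|\ge R+|t|\}$ (Theorem~\ref{th:channelofenergy} here). Calling this an ``exterior energy lower bound'' is accurate in spirit, but it conflates the inequality with the rigidity theorem one derives from it, and that derivation is nontrivial in the presence of a potential. Finally, your opening claim that the energy is ``coercive modulo the negative eigenspace of $-\Delta-V$'' is not how the a priori bound is obtained; the correct mechanism is that the defocusing $\int u^6$ term absorbs $\int Vu^2$ via Young's inequality (using $V\in L^{3/2}$ since $\beta>2$), giving $\mathcal{E}\ge \tfrac12\|\vec u\|_{\HL}^2-C$ with no spectral decomposition needed. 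None of these points are fatal to the plan, but they are where the substance lies, and the authors themselves single out the channel-of-energy mechanism as the essential and non-perturbative ingredient that you are currently treating as routine.
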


We remark that we can in fact choose the set $\Omega\subset Y$ such that for any $V\in \Omega$, all   steady states are hyperbolic~\footnote{In Theorem 6.1~\cite{JiaLiuXu}, we only showed that in the radially symmetric case, the linearized operator has neither a zero eigenvalue nor a zero resonance when restricted to radial functions. This leaves the possibility of having zero eigenvalue or zero resonance when we consider nonradial functions. We will address this issue in Section 2.}. We fix this choice of $\Omega$ below. 
Theorem \ref{th:JLX} is a particular instance of the {\it soliton resolution conjecture} for general dispersive equations, which has been intensively studied for many dispersive equations. We refer the reader to \cite{DKM1,DKM} and references therein for results on the focusing energy critical wave equation, \cite{KLS,Cotemap1,Cotemap2,cotesoliton,KLIS2} and references therein for results on equivariant wave maps, and \cite{TaoCompact,TsaiYau} for results on Schr\"{o}dinger equation with potential.   The difference between these works on this lies with the {\em defocusing nature} of our equation which precludes any blowup. In other words,
the flow on phase space   is  global in time, and together with  \cite{JiaLiuXu}   the present  work establishes a complete description of the long term dynamics as well as a decomposition of
the global data set into components which lead to distinct final states. 

\smallskip 

The result in \cite{JiaLiuXu} proves convergence for all  radial solutions, thus establishing the so-called {\em soliton resolution} in the setting of equation (\ref{eq:mainequation}). The proof relies crucially on the channel of energy inequalities for the linear wave equation, introduced in the works of Duyckaerts, Kenig and Merle \cite{DKM1,DKM}. This tool implies, amongst other properties, that all non-stationary  radial solutions  emit a positive amount of energy into large distances (the ``far field"). The main local decay mechanism for equation (\ref{eq:mainequation}) is the dispersion of energy into large distances, and the channel of energy inequalities provide a powerful tool to quantify such effects. In fact, due to the presence of the potential which destroys many of the favorable algebraic identities of virial type, \footnote{The virial type identities can still be of some use even in this context, see \cite{TaoCompact}.} the channel of energy inequality is perhaps the only tool currently available to measure dispersion in this context. As a consequence, in absence of radial symmetry, where  the channel of energy inequalities (see\cite{DKMnonradial}) become less effective, we have little knowledge of the ``compact solutions", i.e., solution $u(t)$ with the property that $\{\vec{u}(t), t\in \R\}$ is pre-compact in $\dot{H}^1\times L^2$.  This should be compared to the focusing energy critical wave equation, for which one knows (albeit along a sequence of times)  modulo symmetries, that compact solutions converge to some steady state. \footnote{A recent result of Duyckaerts, Kenig and Merle \cite{DKM3}, under certain non-degeneracy assumptions,   completely characterizes all compact solutions as Lorentz transformations of steady states.}

\smallskip

In this paper our main goal is to obtain refined descriptions of the global dynamics of solutions to equation (\ref{eq:mainequation}) in the radial case. 
Let us denote
\begin{equation*}
\dot{H}_{{\rm rad}}^1\times L^2_{{\rm rad}}:=\left\{(u_0,u_1)\in\HL(\R^3):\,\,(u_0,u_1)\,\,{\rm radial}\right\}.
\end{equation*}
We establish the following result. 

\begin{theorem}\label{th:maintheoremintro}
Let $\Omega$ be an open dense subset of $Y$ such that equation (\ref{eq:mainequation}) has only finitely many steady states,  
 which are all hyperbolic, and let $\Sigma$ be the set of radial  steady states. 
 Denote $\OR{u}(t):=\OR{S}(t)(u_0,u_1)$ as the solution to equation (\ref{eq:mainequation}) with radial initial data $(u_0,u_1)\in\RHL(\R^3)$. 
 For each $(\phi,0)\in\Sigma$, define
\begin{equation}
\M_{\phi}:=\left\{(u_0,u_1)\in \RHL(\R^3):\,\OR{S}(t)(u_0,u_1)\,\,{\rm scatters\,\,to\,\,}(\phi,0)\,{\rm as}\,\,t\to +\infty\right\}.
\end{equation}
Denote
\begin{equation}
\mathcal{L}_{\phi}:=-\Delta -V+5\phi^4
\end{equation}
as the linearized operator around $\phi$. If $\mathcal{L}_{\phi}$ has no negative eigenvalues, then $\M_{\phi}$ is an open set $\subseteq \RHL(\R^3)$. 
If $\mathcal{L}_{\phi}$ restricted to radial functions has $n$ negative eigenvalues, then $\M_{\phi}$ is a path connected $C^1$ manifold $\subset \RHL(\R^3)$ of codimension $n$.   
\end{theorem}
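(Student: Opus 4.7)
In the stable case, where $\mathcal{L}_\phi$ has no negative eigenvalues, the hyperbolicity assumption built into $\Omega$ (no zero eigenvalue or zero resonance) forces the spectrum of $\mathcal{L}_\phi$ to be strictly positive. Standard dispersive and Strichartz estimates for the linearized equation $\partial_{tt}w+\mathcal{L}_\phi w=0$, together with the defocusing higher-order perturbation, yield nonlinear asymptotic stability in a small $\HL$-neighborhood of $(\phi,0)$. For a general $(u_0,u_1)\in\M_\phi$, decompose $u=\phi+u^L+w$ with $\OR{u}^L$ the scattering profile and $\OR{w}(t)\to 0$ in $\HL$. Continuity of the solution map in energy space then propagates smallness: at sufficiently large $T$, perturbations of $(u_0,u_1)$ produce solutions close to $\OR{u}(T)$, hence close to $(\phi,0)+\OR{u}^L(T)$; subtracting the dispersing free radiation and invoking local asymptotic stability shows that these perturbed solutions also scatter to $(\phi,0)$, proving openness.

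For the unstable case I first build a local $C^1$ center-stable manifold at $(\phi,0)$. With $n$ radial negative eigenvalues of $\mathcal{L}_\phi$, split $\RHL=E^u\oplus E^s$ with $\dim E^u=n$; on $E^s$ the linearized flow of $\mathcal{L}_\phi$ satisfies the Strichartz and dispersive bounds needed for a forward Duhamel fixed point. Imposing vanishing of the unstable modes as an integral projection condition at $t=+\infty$ and iterating on small stable-subspace data produces a $C^1$ graph $\M_\phi^{\mathrm{loc}}$ over a ball in $E^s$, of codimension $n$, whose elements scatter to $(\phi,0)$. A standard ejection argument in $E^u$ gives the converse: any forward solution confined to a sufficiently small $\HL$-neighborhood of $(\phi,0)$ must lie on $\M_\phi^{\mathrm{loc}}$.

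To produce a $C^1$ chart at a general $(u_0,u_1)\in\M_\phi$ I linearize the full equation around the trajectory $u(t)$ rather than around $\phi$. A perturbation $(u_0,u_1)+(\epsilon_0,\epsilon_1)$ yields a solution $u+\delta$, where $\delta$ satisfies a wave equation with time-dependent coefficient $-\Delta-V+5u(t)^4$; since $u(t)\to\phi+u^L(t)$ and $u^L\to 0$ in the local space-time norms underlying the fixed point (by linear dispersion), this coefficient converges in those norms to $\mathcal{L}_\phi$. The same Lyapunov--Perron contraction with unstable-mode projection at $t=+\infty$ then produces a codimension-$n$ $C^1$ family of admissible $\delta$, which translates back into a local $C^1$ chart of codimension $n$ through $(u_0,u_1)$. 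These charts are mutually compatible because they all parametrize a single global object, namely the set of trajectories in $\RHL$ that scatter to $(\phi,0)$; path-connectedness follows by flowing any point forward to where the free radiation becomes negligible in local norms, then deforming $\OR{u}^L$ continuously to zero along a path of scattering profiles and contracting on $\M_\phi^{\mathrm{loc}}$ to reach $(\phi,0)$.

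The main obstacle, and the place where the paper's new ideas enter, is the uniqueness statement: any forward solution remaining in a small neighborhood of the would-be manifold for all positive times must actually lie on it. This is a one-pass theorem in the spirit of \cite{NS}, but the virial monotonicity driving the original proof is unavailable due to the potential $V$. Instead I use the exterior channel-of-energy inequalities of \cite{DKM,DKM1,JiaLiuXu}: in the radial case, a solution staying close to the manifold but failing to scatter to $(\phi,0)$ would, by the channel-of-energy estimate, have to release a quantitative positive amount of energy to spatial infinity through every forward light cone, which is incompatible with remaining close to a state that is compact modulo dispersing free radiation. Coupling this channel-of-energy dichotomy with the finite-dimensional ejection mechanism in the $n$ unstable directions forces the trapped trajectory onto $\M_\phi$. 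Substituting dispersion at spatial infinity for a missing virial monotonicity is the central technical novelty, and is where the radial hypothesis enters essentially.
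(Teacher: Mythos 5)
Your overall architecture is on the right track (local center-stable construction near the trajectory $U(t)$ rather than near $\phi$; converting $-\Delta-V+5u(t)^4$ into $\mathcal{L}_\phi$ plus time-decaying corrections; using the exterior channel-of-energy inequality as a substitute for virial monotonicity to obtain a ``one-pass''-type global statement). But two of your steps contain genuine gaps.

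First, you misplace where the channel-of-energy inequality is actually needed. The statement you single out as the bottleneck --- ``any forward solution remaining in a small neighborhood of the would-be manifold for all positive times must actually lie on it'' --- is the unconditional uniqueness of the local graph, and it is a purely perturbative contraction-mapping fact: a trajectory trapped near $\OR{U}(t)$ for all $t\geq T$ must satisfy the same stability condition on the $n$ unstable modes, hence solves the same fixed-point system. No global tool is required for this. What genuinely requires the channel-of-energy inequality is the complementary, global statement: if $(v_0,v_1)$ is close to $(U_0,U_1)$ but \emph{off} the local graph, then, after ejection, $v$ does not merely wander away --- it scatters to a steady state of strictly \emph{lower} energy than $(\phi,0)$, so $(v_0,v_1)\notin\M_\phi$. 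Without this, one only knows $\M^{\mathrm{loc}}\subseteq \M_\phi$ locally, not equality, and $\M_\phi$ could acquire extra points near $(U_0,U_1)$ that ruin the manifold structure. Moreover, the heart of that argument is not a soft ``incompatibility with compactness modulo dispersion''; it is a quantitative bookkeeping that the post-ejection radiation is spatially far separated from the pre-existing free radiation $\OR{U}^L$ (which may carry a large, fixed amount of energy), so that the two contributions add and the total radiated energy strictly exceeds $\|\OR{U}^L\|^2_{\HL}$. Glossing over this separation is glossing over the main technical step.

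Second, your path-connectedness argument does not work. Flowing $(u_0,u_1)\in\M_\phi$ forward does not bring $\OR{u}(T)$ close to $(\phi,0)$ in $\HL$: the free radiation $\OR{u}^L(T)$ disperses in $L^6$ and local norms but carries a fixed positive energy, so $\|\OR{u}(T)-(\phi,0)\|_{\HL}$ stays bounded away from zero. Consequently you can never ``contract on $\M_\phi^{\mathrm{loc}}$ to reach $(\phi,0)$,'' and there is no evident continuous deformation of the scattering profile to zero within $\M_\phi$. The paper instead builds an explicit homotopy: given $u,\tilde u$ with data on $\M_\phi$, one solves (by the same Lyapunov--Perron scheme) for a one-parameter family $w(\theta,\cdot)=(1-\theta)u+\theta\tilde u+\eta(\theta,\cdot)$, with $\eta$ a small correction determined by the stability condition at each $\theta$, and then uses $\eta(0)=\eta(1)=0$ together with uniqueness of the nonlinear flow to identify $w(0)=u$, $w(1)=\tilde u$. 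You would need something of this kind rather than a contraction to $(\phi,0)$.
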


\smallskip
\noindent
{\it Remark.} This result shows that each unstable excited steady state attracts a finite co-dimensional manifold of solutions, hence scattering to unstable excited states is non-generic. If $\mathcal{L}_{\phi}$ has no negative eigenvalues, then $\phi$ is stable. This is a relatively straightforward consequence of the known dispersive estimates for $\mathcal{L}_{\phi}$ (see \cite{Marius}) \footnote{Due to our relatively mild decay assumption on $V$, the dispersive estimates we need are close to optimal, hence the need for the work~\cite{Marius} which requires less decay on the potential than, say, $|V(x)|\lesssim \frac{1}{(1+|x|)^{5+}}$ that is usually required for the $L^p$ boundedness of wave operators in some other works.} and standard perturbation arguments. 

On the other hand, if $\mathcal{L}_{\phi}$ has negative eigenvalues then the local dynamics near $(\phi,0)$ is nontrivial. Thanks to \cite{SchNak} and reference therein, it is now well-known in a small neighborhood (in the energy space) of $(\phi,0)$ that we can construct a center-stable  manifold, on which the solution scatters to $(\phi,0)$. Off that manifold, the solution will exit the small neighborhood in finite time. In particular, this center-stable  manifold is unique. Generally speaking, after exiting the small neighborhood, we lose control on the dynamics based on perturbative arguments alone and some {\em global information} is needed. While the {\bf one-pass theorem } provided this global information in~\cite{SchNak}, here it is the channel of energy inequality that allows for the key global control on the solution after the exit time. We will provide further explanations below. 

\smallskip 

Let us briefly outline the main ideas in the proof of Theorem~\ref{th:maintheoremintro}. Take any unstable steady state $(\phi,0)\in \Sigma$ and a radial finite energy solution $\OR{u}(t)$ with initial data $(u_0,u_1)$ which scatters to $(\phi,0)$, i.e., for some radial  solution $\OR{u}^L(t)$ to the linear wave equation (LW), we have
\begin{equation}\label{eq:convergenceinenergy}
\lim_{t\to\infty}\|\OR{u}(t)-\OR{u}^L(t)-(\phi,0)\|_{\HL(\R^3)}=0.
\end{equation}
We first show in a small neighborhood $B_{\epsilon}((u_0,u_1))\subset \RHL(\R^3)$ there exists a local manifold $\M$, such that any solution $\OR{v}(t)$ with initial data on this manifold remains close to $\OR{u}(t)$ in $\HL$ for all positive times and also scatters to $(\phi,0)$. 
Moreover, this manifold has the following uniqueness property:  any radial finite energy solution $\OR{v}(t)$ which stays close to $\OR{u}(t)$ for all positive times necessarily emanates from  $\mathcal{M}$. The construction of this manifold differs from the usual ones in that this is not a center-stable manifold around a steady state. In fact, since the energy of the solution $\OR{u}(t)$ may be much higher than that of $(\phi,0)$, the free radiation $\OR{u}^L$ may contain a large amount of excess energy. One new technical aspect is that in addition to using (\ref{eq:convergenceinenergy}), we also need the space-time control on the radiation term, such as 
\begin{equation}\label{eq:strichartzradiation}
u-\phi\in L^5_tL^{10}_x( [0,\infty)\times \R^3).
\end{equation}
(\ref{eq:strichartzradiation}) is of course expected, but was not usually mentioned in the literature. With the help of (\ref{eq:strichartzradiation}), the construction of $\M$ follows from standard techniques. The next step is to describe the dynamics of solutions starting in $B_{\epsilon}((u_0,u_1))\subset \RHL(\R^3)$, but off the manifold $\M$. This is where we need the global control provided by the channel of energy inequalities. Take any solution $\OR{v}(t)$ starting in $B_{\epsilon}((u_0,u_1))\subset \RHL(\R^3)$ and off the manifold (possibly with a smaller $\epsilon$), then by the property of $\M$, $\OR{u}(t)-\OR{v}(t)$ 
will have energy of a fixed size at some time $t$, no matter how small $\OR{u}(0)-\OR{v}(0)$ is. We will show from this that $\OR{v}(t)$ will emit a fixed amount {\it more} energy than $\OR{u}$, thanks to the channel of energy inequality. The main difficulty is that since $\OR{u}(t)$ may have already emitted a large amount of energy in order to settle down to $\phi$, we need to distinguish the new radiation from the old radiation. This is done with careful perturbation arguments as follows. Choose $(v_0,v_1)$ very close to $(u_0,u_1)$ so that the solutions $\OR{v}(t)$ and $\OR{u}(t)$ remain close for a sufficiently long time. During this time, the radiation has propagated sufficiently far from the origin (with the bulk of energy traveling at speed $\sim 1$). In the finite region, the solution $\OR{v}$ is just a small perturbation of $(\phi,0)$. Due to the assumption $(v_0,v_1)\not\in \M$, after another long time $\OR{v}(t)$ will deviate from $(\phi,0)$ in the finite region by a fixed amount. Then we apply the channel of energy inequality to show that $\OR{v}(t)$ emits a second piece of radiation, which is supported very far away from the first radiation. Hence, in total $\OR{v}(t)$ emits quantitatively more energy into spatial infinity although the energy of $\OR{v}(t)$ can be chosen arbitrarily close to that of $\OR{u}(t)$. Consequently, $\OR{v}(t)$ has less energy than $(\phi,0)$ in the finite region for large times, and must scatter instead to a steady state of lower energy, not $(\phi,0)$. This establishes the proposition that in a small neighborhood of $(u_0,u_1)$, only initial data on $\M$ can lead to solutions scattering to $(\phi,0)$. Thus $\M_{\phi}$ is truly a global manifold in $\RHL(\R^3)$ whence  Theorem \ref{th:maintheoremintro}. The fact that $\mathcal{M}_{\phi}$ is path connected follows from a perturbation argument which we present at the end of Section~\ref{sec:5}.

\smallskip 

\subsection{Some open questions}

Our investigation leaves open the question whether the finite co-dimensional manifold of radial finite energy data scattering to unstable steady states is closed in the energy topology. The answer to this question seems to be nontrivial and will require 
further understanding of the global dynamics. For example, consider an unstable excited state $(\phi,0)\in \Sigma$. It is not hard to show that
there is a radial  solution $\OR{u}(t)$ which converges to $(\phi,0)$ exponentially as $t\to-\infty$, i.e., $\OR{u}(t)$ is on the 
unstable manifold of $(\phi,0)$, and hence $\mathcal{E}(\OR{u}(t))=\mathcal{E}((\phi,0))$. By the channel of energy property established below, $\OR{u}(t)$ will
emit a nontrivial amount of energy to large distances as $t\to +\infty$ and subsequently scatter to a steady state of strictly less energy, say $(\tilde\phi,0)$. 
However, there is a possibility that $(\tilde\phi,0)$ is also an excited state. In that case, denote by $\mathcal{M}_{\tilde{\phi}}$ the manifold of data scattering to
$(\tilde\phi,0)$ as $t\to +\infty$, we see $\OR{u}(t)\in \mathcal{M}_{\tilde{\phi}}$ for all $t$, but $\OR{u}(t)\to (\phi,0)$ in $\HL$ as $t\to-\infty$ and clearly $(\phi,0)\not\in\mathcal{M}_{\tilde{\phi}}$.
Consequently, in such a situation $\mathcal{M}_{\tilde{\phi}}$ would not be closed. Admittedly, such behavior should be non-generic (due to the fact that 
$\tilde\phi$ might be expected to be the ground state)  and perhaps impossible for a generic choice of $V$. We plan to address
this question in future work. 

\smallskip

Another interesting question is if this description of global dynamics can be achieved without the radial assumption. This question
seems to be very challenging. Recall that in the radial case, due to the channel of energy property, we only need to consider the dynamics
outside some well-chosen light cone, where the dynamics is relatively simple. In contrast,  in the non-radial case, where only less effective
channel of energy inequality is available, one must deal directly with the complicated dynamics in a finite region. In this case the only other global tool is the virial type identities. However
the presence of the spatial inhomogeneity $V$ seems to render such identities ineffective. In particular, we do not know if the only compact solutions
are steady states. Recall that $\OR{u}(t)$ is called {\it compact} if $\{\OR{u}(t):\,t\in \R\}$ is pre-compact in $\HL$. This is in sharp contrast with the energy critical focusing wave equation case, where one knows exactly what
these compact solutions are (modulo some non-degeneracy condition on steady states). Hence, a full characterization of compact solutions  seems to be a natural first step.

\medskip

Our paper is organized as follows. In Section~\ref{sec:2}, we study steady states to equation (\ref{eq:mainequation}) and show in particular the existence of stable excited steady states; 
in Section~\ref{sec:3}  we construct the local center-stable  manifold. The novelty of his construction lies with the fact that it is carried out near {\em any solution} which
scatters to a given unstable steady state, without, however, being necessarily close to the steady state in the energy topology. 
in Section~\ref{sec:4} we recall some results on the well-known profile decompositions and channel of energy inequalities, 
adapted to equation (\ref{eq:mainequation}); in Section~\ref{sec:5} we prove our main result Theorem~\ref{th:maintheoremintro}; Appendix~A contains some elliptic estimates for the steady states; Appendix~B proves an endpoint Strichartz estimate for the inhomogeneous wave equation in the radial case.

\section{Steady state solutions}\label{sec:2}

In this section we prove some results about the steady states that are relevant for the global dynamics. We first give  necessary and sufficient conditions for the existence of nontrivial ground state. Recall that such a state is the global minimizer of the energy functional
\begin{equation}
J(\phi):=\int_{\R^3} \Big[\frac{|\nabla \phi|^2}{2}-\frac{V\phi^2}{2}+\frac{\phi^6}{6}\Big]\, dx.
\end{equation}

\begin{lemma}\label{lm:globalminimizer}
Consider $J$ as a functional defined in $\dot{H}^1(\R^3)$. If the operator $-\Delta-V$ has negative eigenvalues then there exists a global minimizer $Q>0$ with $J(Q)<0$. If $-\Delta-V$ has no negative eigenvalues, then the only steady state solution $u\in\dot{H}^1(\R^3)$ to equation (\ref{eq:mainequation}) is $u\equiv 0$.
\end{lemma}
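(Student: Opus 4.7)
The plan is to split into the two spectral cases for $-\Delta-V$ and to handle the existence side by the direct method in the calculus of variations, and the non-existence side by testing the equation against the solution itself.

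First I would check that $J$ is bounded below on $\dot{H}^1(\R^3)$. Because $\beta>2$, the pointwise bound $|V(x)|\lesssim(1+|x|)^{-\beta}$ gives $V\in L^{3/2}(\R^3)$; combining H\"older with the Sobolev embedding $\dot H^1\hookrightarrow L^6$ bounds $|\int V\phi^2\,dx|\le\|V\|_{L^{3/2}}\|\phi\|_{L^6}^{2}$, and Young's inequality absorbs this into a small multiple of $\|\phi\|_{L^6}^6$ plus a constant. Hence
\begin{equation*}
J(\phi)\ \ge\ \tfrac12\|\nabla\phi\|_{L^2}^2+\tfrac{1-\delta}{6}\|\phi\|_{L^6}^6-C_\delta,
\end{equation*}
so $m:=\inf_{\dot H^1}J$ is finite. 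Under the spectral hypothesis there exists $\psi\in\dot H^1$ with $\int(|\nabla\psi|^2-V\psi^2)\,dx<0$, and evaluating on $\epsilon\psi$ for small $\epsilon$ makes the quadratic part dominate the $\epsilon^6\|\psi\|_{L^6}^6$ term, yielding $m<0$.

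Next, take a minimizing sequence $\phi_n$. The lower bound above gives uniform control of $\|\nabla\phi_n\|_{L^2}$ and $\|\phi_n\|_{L^6}$, so after extraction $\phi_n\rightharpoonup\phi$ weakly in $\dot H^1$ and weakly in $L^6$. Lower semicontinuity of $\|\nabla\cdot\|_{L^2}^2$ and of $\|\cdot\|_{L^6}^6$ is standard. The main technical step is to show that the indefinite quadratic term is in fact weakly continuous along this sequence: I would split $V=V\mathbf{1}_{|x|\le R}+V\mathbf{1}_{|x|>R}$, handle the first piece using the Rellich compactness of $\dot H^1\hookrightarrow L^2_{\rm loc}$ so that $\int V\mathbf{1}_{|x|\le R}\phi_n^2\to\int V\mathbf{1}_{|x|\le R}\phi^2$, and handle the tail by the $L^{3/2}$-bound $|\int V\mathbf{1}_{|x|>R}\phi_n^2|\le\|V\mathbf{1}_{|x|>R}\|_{L^{3/2}}\|\phi_n\|_{L^6}^2$, which is uniformly small by the decay $\beta>2$. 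This yields $J(\phi)\le m$, so $\phi$ is a minimizer and in particular $\phi\not\equiv 0$ because $m<0$.

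To get $Q>0$, I would replace $\phi$ by $|\phi|$, using that $J(|\phi|)=J(\phi)$ since the gradient and each even power depend only on $|\phi|$. The Euler-Lagrange equation is a weak form of $-\Delta Q-VQ+Q^5=0$; since $V\in L^{3/2}$ and $Q\in L^6$, elliptic bootstrap upgrades $Q$ to a continuous solution, and then the strong maximum principle applied to $-\Delta Q+(Q^4-V)Q=0$ forces the nonnegative, nontrivial $Q$ to be strictly positive. Finally, in the case that $-\Delta-V$ has no negative eigenvalues, any $u\in\dot H^1$ solving the stationary equation satisfies $u\in L^6$ and $Vu\in L^{6/5}$, so testing against $u$ is legitimate and gives
\begin{equation*}
\int_{\R^3}\big(|\nabla u|^2-Vu^2+u^6\big)\,dx\ =\ 0.
\end{equation*}
The spectral assumption makes the quadratic form of $-\Delta-V$ (which extends by continuity from $C_c^\infty$ to $\dot H^1$ thanks to the bound above) non-negative, so $\int u^6\,dx\le 0$, forcing $u\equiv 0$.

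The only step that requires real care is the weak continuity of $\phi\mapsto\int V\phi^2\,dx$ along the minimizing sequence; everything else is either classical lower semicontinuity, an application of Young, or direct testing against the equation. This is also the step where the quantitative decay $\beta>2$ enters in an essential way rather than merely $V\in L^{3/2}$.
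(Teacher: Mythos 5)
Your proposal is correct, and it is precisely the argument the paper has in mind: the paper omits the proof, remarking only that it is ``a simple application of variational arguments and the strong maximum principle,'' which is exactly the direct method plus $J(|\phi|)=J(\phi)$ plus the strong maximum principle that you carry out. The one place where you are slightly terse — applying the strong maximum principle with the sign-indefinite zeroth-order coefficient $Q^4-V$ — does go through, since on the set $\{Q\ge 0\}$ one has $-\Delta Q+(Q^4-V)^+Q\ge 0$ and the standard statement applies; everything else (the $L^{3/2}$ bound on $V$ from $\beta>2$, the splitting $V=V\mathbf{1}_{|x|\le R}+V\mathbf{1}_{|x|>R}$ with Rellich on the compact piece, and the extension of the nonnegative quadratic form from $H^1$ to $\dot H^1$ by density in the nonexistence case) is clean and complete.
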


\begin{remark}
 The proof of this lemma is a simple application of variational arguments and the strong maximum principle, we omit the standard details.\end{remark} 

In the case that $-\Delta-V$ has no negative eigenvalues and assuming that we only consider radial solutions, then from the results in \cite{JiaLiuXu} we know that all radial finite energy solutions to equation (\ref{eq:mainequation}) scatter to the trivial steady state. In what follows we therefore assume that $-\Delta-V$ has some negative eigenvalues, so that we have nontrivial global minimizers $Q$ and $-Q$. We call $Q$ and $-Q$ {\it ground states}, and call other steady solutions {\it excited states}.\\

The next result shows the uniqueness of ground states. Note that we do not need radial symmetry here.
\begin{lemma}
There is at most one nontrivial nonnegative steady state in $\dot{H}^1(\R^3)$ to equation (\ref{eq:mainequation}).
\end{lemma}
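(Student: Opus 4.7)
The plan is to apply the Brezis--Oswald uniqueness mechanism, which exploits the fact that the reduced nonlinearity $u\mapsto (Vu - u^5)/u = V - u^4$ is strictly decreasing in $u>0$. Suppose $Q_1,Q_2\in\dot{H}^1(\R^3)$ are two nontrivial nonnegative solutions. As a preliminary step, I would invoke standard elliptic regularity for $-\Delta Q_i = VQ_i - Q_i^5$: the boundedness and decay of $V\in Y$ together with $Q_i\in L^6$ via Sobolev embedding give $Q_i\in C^2(\R^3)\cap L^\infty$ with $Q_i(x)\to 0$ as $|x|\to\infty$. Rewriting the equation as
\begin{equation*}
-\Delta Q_i + (Q_i^4 - V)Q_i = 0
\end{equation*}
with locally bounded coefficient $Q_i^4 - V$, the strong maximum principle then forces $Q_i>0$ everywhere in $\R^3$.

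The core step is the Brezis--Oswald identity. I would test the equation for $Q_1$ against $(Q_1^2 - Q_2^2)/Q_1$ and the equation for $Q_2$ against $(Q_2^2 - Q_1^2)/Q_2$, add the two resulting identities, and integrate. An algebraic manipulation of the Dirichlet terms after integration by parts collapses the left-hand side to
\begin{equation*}
\int_{\R^3}(Q_1^2+Q_2^2)\left|\frac{\nabla Q_1}{Q_1}-\frac{\nabla Q_2}{Q_2}\right|^2 dx \;\geq\; 0,
\end{equation*}
while the nonlinear contribution simplifies, thanks to the strict monotonicity of $u\mapsto V-u^4$, to
\begin{equation*}
\int_{\R^3}\bigl[(V-Q_1^4)-(V-Q_2^4)\bigr](Q_1^2-Q_2^2)\,dx \;=\; -\int_{\R^3}(Q_1^2+Q_2^2)(Q_1^2-Q_2^2)^2\,dx \;\leq\; 0.
\end{equation*}
Equality of the two sides forces both integrals to vanish; the pointwise vanishing of the right-hand integrand yields $Q_1^2\equiv Q_2^2$, and nonnegativity then gives $Q_1\equiv Q_2$.

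The principal obstacle is the rigorous justification of the integration by parts, since the test functions $(Q_1^2-Q_2^2)/Q_1$ and $(Q_2^2-Q_1^2)/Q_2$ need not themselves lie in $\dot{H}^1(\R^3)$: for instance $\nabla(Q_2^2/Q_1) = 2Q_2\nabla Q_2/Q_1 - Q_2^2\nabla Q_1/Q_1^2$ may fail to be square integrable at infinity if $Q_1$ decays faster than $Q_2$. I would handle this via the standard cutoff procedure: multiply the test functions by a smooth radial cutoff $\chi_R$ supported in $\{|x|\leq 2R\}$ and equal to $1$ on $\{|x|\leq R\}$, carry out the integration by parts against the resulting compactly supported test functions, and let $R\to\infty$. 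The pointwise decay $Q_i(x)\to 0$ combined with the $\dot{H}^1$ bounds makes the commutator terms involving $\nabla\chi_R$ vanish in the limit, delivering the identity above; the strict positivity $Q_i>0$ established in the first step is essential so that the ratios $Q_j^2/Q_i$ are smooth on every compact set and no small-denominator issue arises.
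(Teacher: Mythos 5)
Your argument is a genuine alternative to the paper's. The paper integrates the antisymmetric combination $Q_2\bigl(-\Delta Q_1 - VQ_1 + Q_1^5\bigr) - Q_1\bigl(-\Delta Q_2 - VQ_2 + Q_2^5\bigr)$ over the set $\Omega=\{Q_1>Q_2\}$, applies Green's identity (noting $Q_1=Q_2$ on $\partial\Omega$), and observes that both the boundary term $\int_{\partial\Omega}Q_1\partial_n(Q_2-Q_1)$ and the bulk term $\int_\Omega Q_1Q_2(Q_1^4-Q_2^4)$ have a definite sign, forcing $\Omega=\emptyset$. Crucially, this never divides by $Q_i$, so the upper decay bound $|Q_i(x)|\lesssim (1+|x|)^{-1}$ (together with the scaled gradient bound $|\nabla Q_i|\lesssim(1+|x|)^{-2}$) already makes the boundary contribution at infinity vanish. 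You instead follow Brezis--Oswald, testing against $(Q_1^2-Q_2^2)/Q_1$ and $(Q_2^2-Q_1^2)/Q_2$; the algebra producing the nonnegative term $(Q_1^2+Q_2^2)\left|\nabla Q_1/Q_1-\nabla Q_2/Q_2\right|^2$ and the nonpositive term $-\!\int(Q_1^2+Q_2^2)(Q_1^2-Q_2^2)^2$ is correct and exploits the same strict monotonicity of $u\mapsto V-u^4$.

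However, there is a real gap in the cutoff justification. You assert that the pointwise decay $Q_i\to 0$ together with the $\dot H^1$ bounds makes the commutator terms involving $\nabla\chi_R$ vanish; this is not true as stated. The offending piece is
\begin{equation*}
\int \nabla Q_1\cdot\nabla\chi_R\,\frac{Q_2^2}{Q_1}\,dx \;\lesssim\; \frac{1}{R}\int_{R\le|x|\le 2R}\frac{|\nabla Q_1|\,Q_2^2}{Q_1}\,dx.
\end{equation*}
With $|\nabla Q_1|\lesssim R^{-2}$, $Q_2^2\lesssim R^{-2}$ and volume $\sim R^3$, this integral goes to zero \emph{only if} $Q_1\gtrsim 1/R$ on the annulus; if $Q_1$ were allowed to decay like $R^{-2}$ (or faster), the term would be $O(1)$ and would not disappear in the limit. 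So your argument implicitly requires a \emph{lower} bound $Q_i(x)\gtrsim (1+|x|)^{-1}$, matching the upper bound from Appendix~A. Such a lower bound does hold for positive solutions of $-\Delta Q_i=(V-Q_i^4)Q_i$ with $V-Q_i^4$ decaying faster than $|x|^{-2}$ (e.g. by a Kelvin transform to a bounded positive solution near the origin of an equation with $L^p$, $p>3/2$, potential, followed by Harnack), but it is a nontrivial step that neither you nor the paper establish. Strict positivity on compact sets, which you invoke, does not control the ratio $Q_2^2/Q_1$ near infinity; this is exactly why the paper's proof deliberately avoids dividing by the solutions.
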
 
\begin{proof}
Suppose $Q_1$ and $Q_2$ are two nontrivial nonnegative steady solutions to equation (\ref{eq:mainequation}) in $\dot{H}^1(\R^3)$. Then
\begin{eqnarray*}
&&-\Delta Q_1-VQ_1+Q_1^5=0,\\
&&-\Delta Q_2-VQ_2+Q_2^5=0.
\end{eqnarray*}
By standard elliptic estimates, we have $Q_1,\,Q_2\in W_{{\rm loc}}^{2,p}(\R^3)$ for any $p<\infty$. Moreover we have the following decay estimates
\begin{equation}
|Q_1(x)|+|Q_2(x)|\leq \frac{C}{1+|x|},\quad x\in \R^3.
\end{equation}
The above claim on the regularity and decay holds for any steady state in $\dot{H}^1$, and follows from more or less standard elliptic techniques. For the sake of completeness, we provide a proof in the Appendix A. By Strong Maximum Principle, we see that $Q_1,\,Q_2>0$. Denote the open set
\begin{equation*} 
\Omega:=\{x: \,Q_1(x)>Q_2(x)\}, 
\end{equation*}
we have
\begin{equation}\label{eq:compare}
\int_{\Omega}Q_2\left(-\Delta Q_1-VQ_1+Q_1^5\right)-Q_1\left(-\Delta Q_2-VQ_2+Q_2^5\right)dx=0.
\end{equation}
By the regularity and decay properties of $Q_1,\,Q_2$ we can integrate by parts in equation (\ref{eq:compare}), noting that $Q_1=Q_2$ on $\partial \Omega$, we see that
\begin{equation}\label{eq:contradiction}
\int_{\partial\Omega} Q_1\frac{\partial}{\partial n}(Q_2-Q_1)\,d\sigma+\int_{\Omega} Q_1Q_2(Q_1^4-Q_2^4)\,dx=0.
\end{equation}
Note that 
\begin{equation*}
\frac{\partial}{\partial n}\left( Q_2-Q_1\right)\ge 0 \quad {\rm on\,\,\,}  \partial\Omega,
\end{equation*}
 and
\begin{equation*}
 Q_1Q_2\left(Q_1^4-Q_2^4\right)>0 \quad {\rm in}\,\,\, \Omega.
\end{equation*}
 Thus equation (\ref{eq:contradiction}) can hold only if $\Omega=\emptyset$. Thus $Q_1 \leq Q_2$. Similarly $Q_2\leq Q_1$. Therefore $Q_1\equiv Q_2$.\end{proof}

Naively one might expect excited states to be unstable, since they change sign. However in general this may not be the case, as seen from the following theorem.
\begin{theorem}\label{th:stableexcitedstate}
There exists an open set $\mathcal{O}\in Y$ such that for any $V\in \mathcal{O}$, there exists an exicted state ${\phi}$ to equation (\ref{eq:mainequation}) which is stable.
\end{theorem}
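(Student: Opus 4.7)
The plan is to exhibit an explicit pair $(V_*, \phi_*)$ via a radial two-well construction, where the excited state is well approximated by a sign-changing combination of two spatially separated single-well ground states, and then to propagate the construction to an open $Y$-neighborhood $\mathcal{O}$ of $V_*$ via the implicit function theorem.

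First I would fix two radial non-negative $W_1, W_2 \in Y$ with disjoint supports---$W_1$ in $\{|x| \leq 1\}$ and $W_2$ in a thin shell $\{R \leq |x| \leq R+1\}$ for a large parameter $R$---each chosen so that $-\Delta - W_i$ has at least one negative eigenvalue. By Lemma~\ref{lm:globalminimizer}, each single-well problem $-\Delta Q_i - W_i Q_i + Q_i^5 = 0$ admits a radial positive ground state $Q_i \in \dot{H}^1(\R^3)$, with linearization $\mathcal{L}_{Q_i} := -\Delta - W_i + 5Q_i^4$ having no negative eigenvalues (the standard ground-state property); for $W_i$ chosen generically, one may additionally assume $\mathcal{L}_{Q_i}$ has no zero modes on $\dot{H}^1(\R^3)$. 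Set $V_* := W_1 + W_2$ and consider the Ansatz $\tilde{\phi} := Q_1 - Q_2$. Because $|Q_i(x)| \lesssim \langle x\rangle^{-1}$ by Appendix~A and the cross interactions are localized in regions where $Q_1 Q_2$ is at most of order $R^{-1}$, the nonlinear residual satisfies $\|{-\Delta\tilde{\phi} - V_*\tilde{\phi} + \tilde{\phi}^5}\|_{\dot{H}^{-1}} \to 0$ as $R \to \infty$. A standard contraction-mapping argument, based on the invertibility of the approximate linearization (which decouples into $\mathcal{L}_{Q_1}$ near the origin and $\mathcal{L}_{Q_2}$ in the shell), then produces an exact sign-changing radial steady state $\phi_* \in \dot{H}^1(\R^3)$ with $\|\phi_* - \tilde{\phi}\|_{\dot{H}^1} \to 0$ as $R \to \infty$.

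Since $\phi_*^4 = Q_1^4 + Q_2^4 + o_R(1)$ pointwise, the spectrum of $\mathcal{L}_{\phi_*} = -\Delta - V_* + 5\phi_*^4$ splits into contributions approximating $\sigma(\mathcal{L}_{Q_1})$ and $\sigma(\mathcal{L}_{Q_2})$ by a standard quasi-mode / spectral perturbation argument; since each $\mathcal{L}_{Q_i}$ has no negative eigenvalues and no zero modes, the same holds for $\mathcal{L}_{\phi_*}$ when $R$ is large. Fixing such an $R$, I apply the implicit function theorem to $F : Y \times \dot{H}^1 \to \dot{H}^{-1}$, $F(V, \phi) := -\Delta\phi - V\phi + \phi^5$, at $(V_*, \phi_*)$: the partial derivative $D_\phi F(V_*, \phi_*) = \mathcal{L}_{\phi_*}$ is a compact perturbation of the isomorphism $-\Delta : \dot{H}^1 \to \dot{H}^{-1}$, and the absence of zero modes makes it itself an isomorphism. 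This yields a $C^1$ branch $V \mapsto \phi_V$ of sign-changing radial steady states on a $Y$-neighborhood $\mathcal{O}$ of $V_*$, and the continuity of the spectrum keeps $\mathcal{L}_{\phi_V}$ free of negative eigenvalues and zero modes throughout $\mathcal{O}$. The dispersive estimates of \cite{Marius} combined with the standard perturbation arguments indicated in the remark after Theorem~\ref{th:maintheoremintro} then give the stability of $\phi_V$ for every $V \in \mathcal{O}$.

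The hard part will be ruling out negative eigenvalues of $\mathcal{L}_{\phi_*}$ on the full space $\dot{H}^1(\R^3)$ rather than merely on the radial subspace: non-negativity on the radial subspace follows directly from the quasi-mode argument, but one must verify that no negative eigenvalues appear in the non-radial angular-momentum sectors. A sufficient condition would be to choose each $W_i$ shallow enough that $-\Delta - W_i$ has no non-radial bound states (so that the higher spherical harmonic sectors contribute only positively) while still being deep enough to retain a radial bound state; such a regime exists for suitably chosen radial profiles. A subsidiary technical challenge is the uniform-in-$R$ control of the cross terms in $\phi_*^4$ and in the nonlinear residual, which requires the pointwise decay of $Q_1, Q_2$ from Appendix~A together with more refined localization estimates, so that both the contraction mapping producing $\phi_*$ and the spectral perturbation argument for $\mathcal{L}_{\phi_*}$ close uniformly in $R$.
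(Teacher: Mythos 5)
Your proposal is structurally the same idea as the paper's (glue together two essentially decoupled ``single-well'' ground states into a sign-changing excited state, exploit positivity of each single-well linearization), but it uses a genuinely different realization: you separate the two wells \emph{in space} with a parameter $R$, whereas the paper separates them \emph{in scale}. The paper takes $W=(1+|x|^2/3)^{-1/2}$ (the Aubin--Talenti profile), sets $V_1=2W^4$, $V_{1\lambda}(x)=\lambda^2 V_1(\lambda x)$, $V=V_1+V_{1\lambda}$, and builds the excited state as $W-W_\lambda+\eta$ with $W_\lambda(x)=\lambda^{1/2}W(\lambda x)$. This choice has a crucial payoff you should appreciate: since $W$ solves $-\Delta W=W^5$ and $V_1=2W^4$, the single-well linearized operator is $-\Delta-2W^4+5W^4=-\Delta+3W^4$, which is \emph{manifestly} positive on all of $\dot H^1(\R^3)$ (with no need for genericity, quasi-modes, or worries about angular sectors). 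The approximate linearization for the two-scale ansatz is then $-\Delta+3W^4+3W_\lambda^4-(\text{cross terms})\ge -\Delta -(\text{cross terms})$, and coercivity follows from a one-line $L^{3/2}$ estimate on the cross terms. Your construction buys nothing over this; it replaces an explicit, algebraically transparent positivity argument with one that requires (i) genericity of $W_i$ to avoid zero modes of $\mathcal{L}_{Q_i}$ and (ii) a partition-of-unity (IMS-type) localization estimate to glue $\mathcal{L}_{Q_1}\ge 0$ and $\mathcal{L}_{Q_2}\ge 0$ into $\mathcal{L}_{\phi_*}\gtrsim 0$.

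Two substantive comments on your write-up. First, the concern you raise about non-radial angular-momentum sectors is a red herring (and your proposed fix, taking $W_i$ shallow so that $-\Delta - W_i$ has only radial bound states, is unnecessary): since $Q_i$ is the \emph{global} minimizer of $J$ over all of $\dot H^1(\R^3)$ (Lemma 2.1, not just over radial functions), the second-variation inequality $\langle \mathcal{L}_{Q_i} h, h\rangle \ge 0$ already holds for \emph{all} $h\in \dot H^1(\R^3)$, radial or not, so $\mathcal{L}_{Q_i}$ has no negative eigenvalues in any sector. The genuine issue is ruling out zero eigenvalues/resonances, which you correctly flag as a genericity requirement. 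Second, the step ``a standard quasi-mode / spectral perturbation argument'' transferring positivity from $\mathcal{L}_{Q_1},\mathcal{L}_{Q_2}$ to $\mathcal{L}_{\phi_*}$ is where the real work lies and is not standard in the sense of being directly citable: $\mathcal{L}_{\phi_*}=-\Delta-W_1-W_2+5\phi_*^4$ is not a small (relatively bounded) perturbation of either $\mathcal{L}_{Q_1}$ or $\mathcal{L}_{Q_2}$, so you need an IMS localization with careful control of the gradient-of-cutoff errors and the $O(R^{-1})$ tails of $Q_i$ in the complementary regions, uniformly in $R$. You acknowledge this at the end, but it is the central technical obstruction that the paper's two-scale construction sidesteps entirely by making the approximate linearization $\ge -\Delta$.
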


\begin{proof}The proof is based on simple perturbation arguments, once a good large potential is chosen. We can  construct an excited state near a good ``profile", 
for which the linearized operator is explicit and stable, with a well-chosen potential. Then we can conclude that the linearization near the excited state is also stable. 

\noindent
{\it Step 1: choice of $V$}. Denote 
\begin{equation*}
W:=\frac{1}{\left(1+\frac{|x|^2}{3}\right)^{\frac{1}{2}}}
\end{equation*}
as the unique (up to scaling and sign change) radial $\dot{H}^1(\R^3)$ solution to 
\begin{equation}
-\Delta u=u^5.
\end{equation}
 Let us take 
\begin{equation}
V_1(x)=2W^4,
\end{equation}
and positive $\lambda$ sufficiently large to be chosen below. Set 
\begin{equation}\label{eq:specialpotential}
V_{1\lambda}(x)=\lambda^2V_1(\lambda x). 
\end{equation}
It is easy to check that $W$ solves 
\begin{equation*}
-\Delta u-V_1u+u^5=0,
\end{equation*}
and that $W_{\lambda}(x)=\lambda^{\frac{1}{2}}W(\lambda x)$ solves
\begin{equation*}
-\Delta u-V_{1\lambda}u+u^5=0.
\end{equation*}
We choose $V:=V_1+V_{1\lambda}$.

\noindent

\medskip

{\it Step 2: construction of a stable excited state.} Consider the following elliptic equation
\begin{equation}\label{eq:stableexcitedstate}
-\Delta \phi-V\phi+\phi^5=0.
\end{equation}
Our goal is to show if $\lambda$ is sufficiently large then we can construct a steady state $\phi$ of the form 
\begin{equation}\label{eq:approximate}
\phi=W-W_{\lambda}+\eta,
\end{equation}
with some small $\eta$. The equation for $\eta$ is
\begin{equation}\label{eq:auxequation}
-\Delta\eta+(3W^4+3W_{\lambda}^4-20WW_{\lambda}^3-20W_{\lambda}W^3+30W^2W_{\lambda}^2)\eta+N(\eta,\lambda)=f_{\lambda},
\end{equation}
where the nonlinear term is
\begin{equation}
N(\eta,\lambda)=10(W-W_{\lambda})^3\eta^2+10(W-W_{\lambda})^2\eta^3+5(W-W_{\lambda})\eta^4+\eta^5,
\end{equation}
and the nonhomogeneous term is
\begin{equation}
f_{\lambda}=-(W-W_{\lambda})^5+W^5-W_{\lambda}^5-V_1W_{\lambda}+WV_{1\lambda}.
\end{equation}
If $\lambda$ is sufficiently large, $f_{\lambda}$ will be small in appropriate function spaces and we can use a perturbation argument to solve for $\eta$. A key ingredient is the following standard uniform estimate in $\lambda$ on the linear part:

\smallskip 

\begin{claim}\label{lm:linearoperator}
For sufficiently large $\lambda$, the operator $$L_{\lambda}:=-\Delta+3W^4+3W_{\lambda}^4-20WW_{\lambda}^3-20W_{\lambda}W^3+30W^2W_{\lambda}^2:\,\, \dot{H}^1(\R^3)\to  \dot{H}^{-1}(\R^3)$$ is invertible and we have the following estimate on the norm of the inverse operator $L_{\lambda}^{-1}$
\begin{equation}\label{eq:uniformbound}
\|L_{\lambda}^{-1}\|_{\dot{H}^{-1}(\R^3)\to  \dot{H}^{1}(\R^3)}\leq C,
\end{equation}
where $C$ is some absolute constant.
\end{claim}

\smallskip

\noindent
{\it Proof of Claim \ref{lm:linearoperator}.} It is easy to verify that $-\Delta:\, \dot{H}^1(\R^3)\to  \dot{H}^{-1}(\R^3)$ is invertible, and that $L_{\lambda}$ is a compact perturbation of $-\Delta$. Thus to prove that 
$L_{\lambda}$ is invertible we only need to show its kernel is trivial. This follows directly from the following bound for large $\lambda$
\begin{equation}\label{eq:aprioribound}
(L_{\lambda}\phi,\phi)\ge \frac{1}{2}\|\phi\|^2_{\dot{H}^1(\R^3)}, \,\,\forall \phi\in  \dot{H}^1(\R^3),
\end{equation}
where the inner product is with respect to the $\dot{H}^1$ and $\dot{H}^{-1}$ pairing. The proof of   (\ref{eq:aprioribound}) is an easy consequence of integration by parts argument and H\"{o}lder's inequality, once we note 
that 
\begin{equation*}
\lim_{\lambda\to\infty}\|WW_{\lambda}^3+W^3W_{\lambda}+W^2W_{\lambda}^2\|_{L^{3/2}(\R^3)}=0.
\end{equation*}
Suppose $L_{\lambda}u=f$. 
Then from the bound (\ref{eq:aprioribound}) and H\"older's inequality we infer that  
\begin{equation}
\|\nabla u\|_{L^2(\R^3)}\leq 2 \|f\|_{\dot{H}^{-1}},\,\,\,{\rm for\,\,sufficiently\,\,large\,\,}\lambda.
\end{equation}
In view of the preceding, the bound (\ref{eq:uniformbound}) follows immediately.

\medskip

Using the uniform bound on $L_{\lambda}^{-1}$ we can easily solve for $\eta$.
\begin{claim}\label{eq:existence}
For any $\epsilon>0$, if $\lambda$ is sufficiently large, then there exists a solution $\eta\in \dot{H}^1\cap L^6(\R^3)$ to equation (\ref{eq:auxequation}), in the sense of distributions, with 
\begin{equation}
\|\eta\|_{L^6(\R^3)}<C\epsilon,
\end{equation}
where $C$ is an absolute constant.
\end{claim}

\smallskip

\noindent
{\it Proof.} Take small $\epsilon>0$. We can take $\lambda$ sufficiently large, so that $\|f_{\lambda}\|_{L^{6/5}}<\epsilon$ and (\ref{eq:uniformbound}) holds.
 We  reformulate equation (\ref{eq:auxequation}) in the following way
\begin{equation}\label{eq:contraction}
\eta=L_{\lambda}^{-1}f_{\lambda}-L_{\lambda}^{-1}N(\eta,\lambda).
\end{equation}
Note that $L^{6/5}(\R^3)$ embeds continuously into $\dot{H}^{-1}(\R^3)$, thus the right hand side makes sense. Now one can check that
$$L_{\lambda}^{-1}f_{\lambda}-L_{\lambda}^{-1}N(\eta,\lambda)$$ is a contraction mapping in $B_{2\epsilon}\subseteq L^6(\R^3)$, if we choose $\epsilon$ small enough. Thus equation (\ref{eq:contraction}) and consequently equation (\ref{eq:auxequation}) has a unique solution $\eta$, with $ \|\eta\|_{L^6(\R^3)}\leq 2\epsilon$. Clearly, this $\eta$ satisfies the requirement of  Claim~2.2.

\smallskip

By looking at the $L^6$ norm of the positive and negative parts of $\phi$ in (\ref{eq:approximate}), it follows that the steady state $W-W_{\lambda}+\eta$ changes sign if we choose $\epsilon$ sufficiently small, and thus is an excited state. Moreover the linearized operator around this excited state is 
\begin{equation*}
-\Delta-2W^4-2W_{\lambda}^4+5(W-W_{\lambda}+\eta)^4.
\end{equation*}
If we choose $\epsilon$ small enough one can show this operator is nonnegative and has no negative eigenvalues nor zero eigenvalues/resonance. 
A standard local perturbation analysis implies that the excited state $W-W_{\lambda}+\eta$ is stable.

\smallskip 

\noindent

{\it Step 3: stability with respect to $V=V_1+V_{1\lambda}$}.  We now show that our construction is stable with respect to small perturbations of the potential $V$ in $Y$. This is more or less clear from the existence proof. Below we just outline some key points. We now formulate

\begin{claim}\label{lm:excitedstatesforgenericpotential}
Let $V:=V_1+V_{1\lambda}$ be defined as above. Assume $\lambda$ sufficiently large and $\delta$ sufficiently small, then for any radial potential $\widetilde{V}\in Y$ satisfying
\begin{equation}
\|\widetilde{V}-V\|_Y\leq \delta,
\end{equation}
there exists a stable excited state to 
\begin{equation}
-\Delta\phi-\widetilde{V}\phi+\phi^5=0\,\,\,\,{\rm in}\,\,\R^3.
\end{equation}
\end{claim}

\noindent
{\it Proof of Claim \ref{lm:excitedstatesforgenericpotential}.} For any $\widetilde{V}\in Y$ with 
\begin{equation*}
\|\widetilde{V}-V\|_Y\leq \delta,
\end{equation*}
for some $\delta$ sufficiently small. We have
\begin{equation*}
\|(-\Delta-\widetilde{V})-(-\Delta -V)\|_{\dot{H}^1\to\dot{H}^{-1}}\leq C \delta.
\end{equation*}
Hence if we choose $\lambda$ sufficiently large, $\delta$ sufficiently small, we can repeat {\it Step 2} with $V$ replaced by $\widetilde{V}$. The resulting $L_{\lambda}$ will still satisfy the uniform bound in Claim \ref{lm:linearoperator}, $f_{\lambda}$ with extra terms $(\widetilde{V}-V)W$ and $(\widetilde{V}-V)W_{\lambda}$, and $N(\eta,\lambda)$ can still be controlled in exactly the same way, as long as $\delta$ is chosen sufficiently small. One can then use the  Contraction Mapping Theorem to finish the proof. We omit the routine details.
This finishes the proof of Theorem~\ref{th:stableexcitedstate}.

\end{proof}

The above theorem proves the existence of stable excited states, on the other hand it is clear that there are unstable excited states. For example, suppose that the operator $-\Delta-V$ has negative eigenvalues (so that there are nontrivial ground states), then the excited state $\phi\equiv 0$ is unstable. A perhaps more interesting fact is that ``newly bifurcated" excited states are unstable.

\begin{lemma}\label{lm:smallexcitedstate}
Let $\alpha\in [0,\infty)$, $V\in C_c^{\infty}(\R^3)$ be nonnegative and not identically zero. Suppose that for $\alpha\ge\alpha_1$ the principal eigenvalue $\lambda_1(\alpha)$ of the operator $-\Delta-\alpha V$ is negative. Assume further that $$(\alpha_2,0)\in (\alpha_1,\infty)\times \dot{H}^1(\R^3)$$ is a bifurcation point for the equation 
\begin{equation}\label{eq:dynamicalequation}
-\Delta \phi-\alpha V\phi+\phi^5=0,
\end{equation}
in the sense that for any $\epsilon>0$ there is a nontrivial radial steady state $(\alpha,\phi)$ to equation (\ref{eq:dynamicalequation}) with 
\begin{equation}\label{eq:smallexcitedstates}
|\alpha-\alpha_2|+\|\phi\|_{\dot{H}^1(\R^3)}\leq \epsilon.
\end{equation}
Then for $\epsilon$ sufficiently small, the steady state $(\alpha,\phi)$ satisfying (\ref{eq:smallexcitedstates}) is unstable.
\end{lemma}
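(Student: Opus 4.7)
The plan is to establish linear instability by producing a negative eigenvalue of the linearized operator
\[
\mathcal{L}_\phi := -\Delta - \alpha V + 5\phi^4
\]
for every steady state pair $(\alpha,\phi)$ satisfying the smallness hypothesis with $\epsilon$ chosen small enough; nonlinear instability then follows from standard invariant manifold theory around $(\phi,0)$, cf.~\cite{SchNak}. The strategy is to reduce to a perturbation of $-\Delta - \alpha V$, which by assumption already possesses a negative principal eigenvalue.

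First, I would record the key spectral fact about the unperturbed operator. Since $\alpha_2 > \alpha_1$, the hypothesis gives $\lambda_1(\alpha_2) < 0$, and continuity of the principal eigenvalue in $\alpha$ (an application of analytic perturbation theory, available since $\alpha V$ is a relatively compact perturbation of $-\Delta$) yields
\[
\lambda_1(\alpha) \leq \tfrac{1}{2}\lambda_1(\alpha_2) < 0 \quad\text{for all }|\alpha - \alpha_2| \leq \epsilon_0,
\]
with $\epsilon_0$ small but fixed. Let $\psi_\alpha$ be the corresponding $L^2$-normalized principal eigenfunction, which is automatically radial by uniqueness and radial symmetry of $V$. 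Because $\lambda_1(\alpha) < 0$ lies strictly below the essential spectrum $[0,\infty)$ and $V$ is compactly supported, $\psi_\alpha$ decays exponentially; elliptic regularity gives $\psi_\alpha \in H^2 \hookrightarrow L^\infty$, and in particular $\psi_\alpha \in L^6$ with an $L^6$ bound uniform in $\alpha$ on $|\alpha - \alpha_2| \leq \epsilon_0$.

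Next, test the quadratic form of $\mathcal{L}_\phi$ against $\psi_\alpha$:
\[
\langle \mathcal{L}_\phi \psi_\alpha, \psi_\alpha\rangle_{L^2} = \lambda_1(\alpha) + 5\int_{\R^3} \phi^4 \psi_\alpha^2\, dx .
\]
The cross term is controlled by H\"older and Sobolev:
\[
\left|\int_{\R^3} \phi^4 \psi_\alpha^2\, dx\right| \leq \|\phi\|_{L^6}^4 \|\psi_\alpha\|_{L^6}^2 \leq C \|\phi\|_{\dot{H}^1}^4 \leq C\epsilon^4.
\]
Choosing $\epsilon$ small (depending on $\lambda_1(\alpha_2)$ and $\|\psi_\alpha\|_{L^6}$), we get $\langle \mathcal{L}_\phi \psi_\alpha,\psi_\alpha\rangle \leq \tfrac{1}{4}\lambda_1(\alpha_2) < 0$.

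To convert this negativity of the quadratic form into a genuine negative eigenvalue, it remains to check that the essential spectrum of $\mathcal{L}_\phi$ is $[0,\infty)$. By Weyl's theorem this reduces to showing that both $\alpha V$ and $5\phi^4$ are relatively compact perturbations of $-\Delta$; the first is obvious since $V \in C_c^\infty$, and for the second one invokes the elliptic decay $|\phi(x)| \leq C/(1+|x|)$ from Appendix~A, so that $\phi^4$ decays like $(1+|x|)^{-4}$ and lies in $L^{3/2} \cap L^\infty$. The variational characterization of the bottom of the spectrum then yields
\[
\mu_1(\mathcal{L}_\phi) \leq \frac{\langle \mathcal{L}_\phi \psi_\alpha, \psi_\alpha\rangle}{\|\psi_\alpha\|_{L^2}^2} < 0,
\]
so $\mathcal{L}_\phi$ possesses a discrete negative eigenvalue and $(\phi,0)$ is unstable. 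The only mildly delicate point is the uniformity in $\alpha$ of the constants in the Sobolev estimate and in the decay of $\psi_\alpha$; this is supplied by the continuous dependence of the principal eigenpair on $\alpha$ in our compactly supported potential setting, and does not require any new analytic input.
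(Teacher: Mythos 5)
Your proof is correct, and the overall strategy is the same as the paper's (exhibit a test function making the quadratic form of $\mathcal{L}_\phi$ negative, controlling the $5\phi^4$ perturbation by the Sobolev-critical H\"older estimate $\int\phi^4\psi^2\le\|\phi\|_{L^6}^4\|\psi\|_{L^6}^2\lesssim\epsilon^4$). The difference is in the choice of test function and what machinery it needs. You use the $\alpha$-dependent, $L^2$-normalized principal eigenfunction $\psi_\alpha$, and so must invoke analytic perturbation theory to get continuity of $\lambda_1(\alpha)$ and uniformity of $\|\psi_\alpha\|_{L^6}$ near $\alpha_2$, plus a Weyl/essential-spectrum argument to convert the negative Rayleigh quotient into a discrete eigenvalue. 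The paper sidesteps all of this by exploiting monotonicity: since $V\ge 0$, the form $\int(|\nabla\psi|^2-\alpha V\psi^2)$ is nonincreasing in $\alpha$, so a single $\dot H^1$-normalized test function $\psi$ chosen once for $\alpha_1$ (with $\int(|\nabla\psi|^2-\alpha_1 V\psi^2)\le-\delta\|\psi\|_{\dot H^1}^2$, $\delta>0$ fixed) works for every $\alpha\ge\alpha_1$ simultaneously, with no perturbation theory or uniformity discussion needed; the $\dot H^1$ normalization also makes the Sobolev bound scale correctly without further comment. Both routes are valid; the paper's is lighter, but your version is fine and fills in the essential-spectrum step that the paper leaves implicit.
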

\begin{proof}Choosing $\epsilon$ sufficiently small we have $\alpha>\alpha_1$. Since $V$ is nonnegative, we have that $$\lambda_1(\alpha)\leq\lambda_1(\alpha_1)<0,$$ thus there exist $\delta>0$ and $\psi\in\dot{H}^1(\R^3)$ such that
\begin{equation}
\int_{\R^3}(|\nabla\psi|^2-\alpha V\psi^2)\, dx\leq\int_{\R^3} (|\nabla\psi|^2-\alpha_1 V\psi^2)\, dx\leq -\delta \|\psi\|^2_{\dot{H}^1(\R^3)}.
\end{equation} 
Note the linearized operator around $\phi$ is
\begin{equation}
-\Delta-\alpha V+5\phi^4.
\end{equation}
If $\epsilon$ is sufficiently small, by H\"older's inequality, we obtain
\begin{equation*}
\int_{\R^3}(|\nabla \psi|^2-\alpha V\psi^2+5\phi^4\psi^2)\, dx\leq -(\delta-C\epsilon^4) \|\psi\|^2_{\dot{H}^1}<0.
\end{equation*}
Thus the linearized operator has at least one negative eigenvalue and consequently the small excited states are unstable. \end{proof}

\begin{remark}For a description of bifurcations, see Appendix A of \cite{JiaLiuXu}. The above arguments also imply that if $-\Delta-V$ has negative eigenvalues, then all sufficiently small excited states in $\dot{H}^1(\R^3)$ are unstable.
\end{remark}


In Theorem 6.1 of \cite{JiaLiuXu}, we showed that there exists a dense open set $\Omega_1\subseteq Y$ such that for any $V\in \Omega_1$, there exists only finitely many radial steady states to equation (\ref{eq:mainequation}), all of which are hyperbolic when the linearized operator is restricted to the space of radial functions. As is mentioned in the introduction, this leaves open the possibility that some radial steady states may still have zero eigenvalues or a zero resonance without the radial assumption. As the property of having zero eigenvalues or a zero resonance is non-generic, we can expect to eliminate such behavior by removing from $\Omega_1$ a closed set while making the remaining set still open and dense. More precisely we have the following result.
\begin{lemma}
Let $\Omega_1$ be a dense open subset of $Y$ such that for any $V\in \Omega_1$ there are only finitely many radial steady states to equation (\ref{eq:mainequation}), all of which are hyperbolic if the linearized operator is restricted to radial functions. Then there exists a dense open set $\Omega\subseteq \Omega_1$, such that for any $V\in \Omega$, all radial steady states are hyperbolic without radial symmetry.
\end{lemma}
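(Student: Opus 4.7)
The overall plan is to exploit the full rotational invariance of $\mathcal{L}_\phi = -\Delta - V + 5\phi^4$, which holds because both $V$ and any radial steady state $\phi$ are radial, and decompose $L^2(\R^3)$ into angular-momentum sectors $\mathcal{H}_\ell$ spanned by $\{f(r) Y_{\ell m}(\omega)\}$. On each sector the operator reduces to a one-dimensional Schr\"odinger operator
\begin{equation*}
(\mathcal{L}_\phi)_\ell = -\partial_r^2 - \frac{2}{r}\partial_r + \frac{\ell(\ell+1)}{r^2} - V(r) + 5\phi(r)^4.
\end{equation*}
The sector $\ell=0$ is precisely the radial case, already hyperbolic by the definition of $\Omega_1$, so the task reduces to further shrinking $\Omega_1$ so that $(\mathcal{L}_{\phi_i(V)})_\ell$ has no zero eigenvalue and no zero resonance for every $\ell \geq 1$ and every radial steady state $\phi_i(V)$ of $V$.

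First I would reduce to finitely many $\ell$. Using $|V(x)| \lesssim (1+|x|)^{-\beta}$ with $\beta>2$ together with the decay $|\phi(x)| \lesssim (1+|x|)^{-1}$ of the steady states (Appendix~A), and Hardy's inequality on the half-line (applicable because for $\ell \geq 1$ any candidate mode vanishes like $r^\ell$ at the origin), the centrifugal barrier $\ell(\ell+1)/r^2$ dominates the perturbation $-V + 5\phi^4$ once $\ell \geq \ell_0(V,\phi)$; hence $(\mathcal{L}_\phi)_\ell$ is strictly positive on $\mathcal{H}_\ell$ for such $\ell$ and admits no zero eigenvalue or zero resonance. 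The continuous dependence of $\phi_i$ on $V$ provided by the implicit function theorem (valid because $\mathcal{L}_{\phi_i}$ is invertible on the radial sector, by hyperbolicity) lets me take both $\ell_0$ and the list of steady states $\{\phi_i(V)\}$ uniform on a small $Y$-neighborhood $U_{V_0}$ of any given $V_0 \in \Omega_1$.

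For each of the finitely many remaining pairs $(\ell, \phi_i(V))$ with $1 \leq \ell \leq \ell_0$, existence of a zero eigenvalue or zero resonance of $(\mathcal{L}_{\phi_i(V)})_\ell$ is equivalent to the vanishing of a single Wronskian $W_{\ell,i}(V)$ between the fundamental solution of the reduced ODE that is regular at the origin and the one decaying at infinity. This $W_{\ell,i}$ depends smoothly (and real-analytically along smooth curves in $Y$) on $V$, so the set $\mathcal{N}_{V_0} := \{V \in U_{V_0} : W_{\ell,i}(V)\neq 0 \text{ for all relevant } (\ell,i)\}$ is open, and the union over $V_0\in\Omega_1$ of such sets will yield the desired $\Omega$ once density inside each $U_{V_0}$ is established.

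Density is where the main work lies. Given $V_0 \in U_{V_0}$ with $W_{\ell,i}(V_0) = 0$, I would perturb along $V_s := V_0 + sW$ for a suitable radial $W \in Y$. Using $\partial_s \phi_i(V_s)\big|_{s=0} = \mathcal{L}_{\phi_0}^{-1}(W \phi_0)$ in the radial sector (where the inverse exists by hyperbolicity), standard first-order perturbation theory gives
\begin{equation*}
\frac{d}{ds}\bigg|_{s=0}\lambda_{\ell,i}(V_s) = \int_0^\infty W(r)\, K_{\ell,i}(r)\, r^2\, dr, \quad K_{\ell,i}(r) := -|f(r)|^2 + 20\,\phi_0(r)\,\mathcal{L}_{\phi_0}^{-1}\!\big(\phi_0^3|f|^2\big)(r),
\end{equation*}
where $g = f(r)Y_{\ell m}$ is the normalized zero mode and $\mathcal{L}_{\phi_0}^{-1}$ acts on the radial sector. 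The principal obstacle is to show that $K_{\ell,i}(r) \not\equiv 0$; once this is done, choosing $W \in Y$ with $\int_0^\infty W K_{\ell,i} r^2\, dr \neq 0$ makes the derivative nonzero and places $V_s \in \mathcal{N}_{V_0}$ for small $s>0$. To rule out $K_{\ell,i}\equiv 0$, I would compare asymptotics near $r=0$ (where $|f|^2 \sim c r^{2\ell}$ for $\ell\geq 1$, whereas the Green's-function representation of $\phi_0 \mathcal{L}_{\phi_0}^{-1}(\phi_0^3|f|^2)$ gives a generically different order of vanishing); in any exceptional configuration where this comparison fails, I would fall back on the real-analyticity of $W_{\ell,i}$ along $V_s$ together with higher-order variations, which constitutes the genuine technical difficulty of the argument.
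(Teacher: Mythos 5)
Your approach via spherical-harmonic decomposition, Wronskians, and generic first-order perturbation is structurally quite different from the paper's argument, and unfortunately it contains a genuine unresolved gap at exactly the point you yourself flag. The whole density argument hinges on showing that the perturbation kernel $K_{\ell,i}(r)=-|f(r)|^2+20\,\phi_0(r)\,\mathcal{L}_{\phi_0}^{-1}(\phi_0^3|f|^2)(r)$ does not vanish identically, so that some radial $W\in Y$ yields a nonzero derivative of $\lambda_{\ell,i}$. Your proposed leading-order comparison near $r=0$ is only heuristic (it is not at all clear that $\phi_0\,\mathcal{L}_{\phi_0}^{-1}(\phi_0^3|f|^2)$ vanishes to a strictly different order than $|f|^2$ --- the Green's-function contribution could in principle cancel to the same order), and the ``fall back on real-analyticity and higher-order variations'' is precisely the step that would constitute the proof; nothing is offered to carry it out. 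As written, the argument would also need to handle the possibility that $W_{\ell,i}$ is identically zero along \emph{every} one-parameter family $V_s$ through $V_0$, which is exactly what nonvanishing of some variational derivative is supposed to exclude. So the density claim is not established.

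The paper sidesteps this issue entirely by not perturbing generically but rather making a \emph{specific} constructive choice: $\widetilde V=V+\epsilon\,\phi_1^4$. Differentiating the steady-state equation shows that the perturbed profile is $\phi_1+\epsilon\psi$ with $\psi=\tfrac{\phi_1}{4}+O(\epsilon)$, and plugging back into the linearization yields, after expanding, a potential of the form $\bigl[5(1+\tfrac{\epsilon}{4})^4-\epsilon\bigr]\phi_1^4+O(\epsilon^2)\ge 5\phi_1^4+4\epsilon\phi_1^4+O(\epsilon^2)$. The extra positive term $4\epsilon\phi_1^4$ is then fed into a coercivity estimate (the paper's Claim~2.6): on the orthogonal complement of the negative eigenfunctions, the quadratic form of $-\Delta-V+5\phi_1^4+4\epsilon\phi_1^4$ is bounded below by $c\epsilon\|f\|_{\dot H^1}^2$, proved by splitting $f$ along the (finite-dimensional) kernel directions. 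Combined with Min--Max this uniformly lifts any zero eigenvalue or zero resonance off zero, with no case analysis over angular momenta and no need to prove nonvanishing of a variational derivative. Your reduction to finitely many $\ell$ via the centrifugal barrier, and the observation that for $\ell\ge1$ zero modes are automatically $L^2$ eigenfunctions rather than resonances, are both correct and could be salvaged; but to complete the proof along your lines you would still have to establish the nondegeneracy of the variation, and the paper's explicit perturbation is a cleaner way to get exactly that.
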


\smallskip
\noindent
{\it Remark.} We fix this choice of $\Omega$ below.
\begin{proof} Take any $V\in \Omega_1$, and suppose that $\phi_1,\phi_2,\dots,\phi_n$ are the radial steady states corresponding to $V$. Since the linearized operator around any $\phi_i$ is hyperbolic in the space of radial functions, standard perturbation arguments imply that any sufficiently small perturbation of $V$ in $\Omega$ will not change the number of radial steady states, and each radial steady state $\phi_i$ depends smoothly on the perturbation. We will show that we can find a specific perturbation $\widetilde{V}$ of $V$ with arbitrarily small norm in $Y$, (in particular $\widetilde{V}\in\Omega_1$), such that the perturbed steady state $\phi_1(\widetilde{V})$ becomes hyperbolic even without restricting to radial functions. Since being hyperbolic is an open property, we can then make repeated small perturbations to the potential until all the steady states become hyperbolic even in the nonradial function space. Then it is clear the subset $\Omega\subseteq \Omega_1$ with the property that for any $V\in\Omega$, all the radial steady states are hyperbolic without restriction to radial functions, is dense and open in $Y$. Below we describe the perturbation in detail. For $\epsilon>0$ sufficiently small, set
\begin{equation}
\widetilde{V}:=V+\epsilon\, \phi_1^4.
\end{equation}
Suppose that the perturbed steady state becomes 
\begin{equation}
\widetilde{\phi_1}:=\phi_1+\epsilon\, \psi.
\end{equation}
Then the equation for $\psi$ is 
\begin{equation}\label{eq:perturbedsteadystate}
-\Delta\psi-V\psi+5\phi_1^4\psi=\phi_1^5+\epsilon\, \phi_1^4\psi-N(\psi,\epsilon),\quad{\rm in\,\,}\R^3,
\end{equation}
where 
\begin{equation}
N(\psi,\epsilon)=10\epsilon\, \phi_1^3\psi^2+10\epsilon^2\phi_1^2\psi^3+5\epsilon^3\phi_1\psi^4+\epsilon^4\psi^5.
\end{equation}
Noting that 
\begin{equation*}
-\Delta \phi_1-V\phi_1+5\phi_1^4\cdot\phi_1=4\phi_1^5,
\end{equation*}
and the fact that the linearized operator $-\Delta-V+5\phi_1^4$ is invertible from $\dot{H}^1\cap\dot{H}^2\hookrightarrow \dot{H}^{-1}\cap L^2$ when restricted to radial functions (which follows from hyperbolicity in radial functions), we can rewrite equation (\ref{eq:perturbedsteadystate}) as 
\begin{equation}\label{eq:invertedperturbedsteadystate}
\psi=\frac{\phi_1}{4}+(-\Delta-V+5\phi_1^4)^{-1}(\epsilon\,\phi_1^4\,\psi-N(\psi,\epsilon)).
\end{equation}
If we take $\epsilon$ sufficiently small we can assume $\widetilde{V}\in\Omega_1$, and moreover we can use standard perturbation arguments  to show that 
\begin{equation}
\psi=\frac{\phi_1}{4}+O_{\dot{H}^1\cap\dot{H}^2}(\epsilon).
\end{equation}
Then the linearized operator around the perturbed steady state $\phi_1+\epsilon\,\psi$ becomes
\begin{equation}
-\Delta-V+\left[5\left(1+\frac{\epsilon}{4}\right)^4-\epsilon\right]\phi_1^4+O_{L^{\frac{3}{2}}\cap L^{\infty}}(\epsilon^2).
\end{equation}
The key point for us is that 
\begin{equation}
\left[5\left(1+\frac{\epsilon}{4}\right)^4-\epsilon\right]\phi_1^4\ge 5\phi_1^4+4\epsilon\,\phi_1^4,
\end{equation}
hence we have gained a positive factor which will eliminate the zero eigenvalues/zero resonance. The proof is finished with the following claim and the Min-Max principle for eigenvalues.
\end{proof}

\begin{claim}\label{claimedlowerbound}
Let $V$ and $\phi_1$ be given as above. Suppose that the linearized operator $$-\Delta-V+5\phi^4_1$$ has $k\ge 0$ negative eigenvalues with corresponding eigenfunctions $\rho_1,\dots,\rho_k$, and possibly also zero eigenvalues or zero resonance. Then for $\epsilon>0$ sufficiently small, and any $f\in \dot{H}^1$ with 
\begin{equation*}
\int_{\R^3} f\,\rho_1\,dx=\cdots=\int_{\R^3} f\,\rho_k\,dx=0,
\end{equation*}
we have
\begin{equation}\label{eq:cruciallowerbound}
\int_{\R^3} (-V+5\phi_1^4+4\epsilon\,\phi_1^4)f^2+|\nabla f|^2\,dx\ge c\epsilon\,\|f\|^2_{\dot{H}^1},
\end{equation}
for some fixed $c>0$.
\end{claim}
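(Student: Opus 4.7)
The plan is a compactness--contradiction argument. Suppose (\ref{eq:cruciallowerbound}) fails; then one can produce sequences $\epsilon_n \downarrow 0$ and $f_n \in \dot{H}^1(\R^3)$ with $\|f_n\|_{\dot{H}^1} = 1$, $\int f_n \rho_i\,dx = 0$ for $i=1,\dots,k$, and
\begin{equation*}
Q_{\epsilon_n}(f_n) := \langle Lf_n, f_n\rangle + 4\epsilon_n \|\phi_1^2 f_n\|_{L^2}^2 \le \frac{\epsilon_n}{n},
\end{equation*}
where $L := -\Delta - V + 5\phi_1^4$ and the quadratic form equals the left side of (\ref{eq:cruciallowerbound}). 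Since the $L^2$-span of $\rho_1,\dots,\rho_k$ is $L$-invariant and $L \ge 0$ on its $L^2$-orthogonal complement (by the min--max principle), the orthogonality condition $\int f_n\rho_i\,dx = 0$ gives $\langle L f_n, f_n\rangle \ge 0$, and the two non-negative pieces of $Q_{\epsilon_n}$ must separately tend to zero: $\langle Lf_n, f_n\rangle \to 0$ and $\|\phi_1^2 f_n\|_{L^2}\to 0$.

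The second step is the limit procedure. Pass to a subsequence with $f_n \rightharpoonup f^*$ weakly in $\dot{H}^1$. Both $V$ and $\phi_1^4$ lie in $L^{3/2}(\R^3)$ --- the former by the assumption $|V| \lesssim (1+|x|)^{-\beta}$ with $\beta > 2$, the latter by the decay estimate $|\phi_1| \lesssim (1+|x|)^{-1}$ from Appendix~A --- so multiplication by $V$ or by $\phi_1^4$ is a compact operator $\dot{H}^1 \to \dot{H}^{-1}$. This gives $\int V f_n^2\,dx \to \int V(f^*)^2\,dx$ and $\int \phi_1^4 f_n^2\,dx \to \int \phi_1^4(f^*)^2\,dx$. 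Combined with $\langle L f_n, f_n\rangle \to 0$, one obtains $\|\nabla f_n\|_{L^2}^2 \to \int (V - 5\phi_1^4)(f^*)^2\,dx$. By weak lower semicontinuity and $\langle L f^*, f^*\rangle \ge 0$ (inherited from $f^* \perp \rho_i$), the right-hand side must equal $\|\nabla f^*\|_{L^2}^2$, so $f_n \to f^*$ strongly in $\dot{H}^1$. Hence $\|f^*\|_{\dot{H}^1} = 1$ and $\|\phi_1^2 f^*\|_{L^2} = 0$; in particular $f^*$ vanishes a.e.\ on the non-empty open set $\{\phi_1 \neq 0\}$.

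The final step is to deduce $f^* \equiv 0$ and conclude. Taking the first variation of the constrained quadratic form at $f^*$ (and using $f^* \perp \rho_i$ together with $L\rho_i = \lambda_i \rho_i$ to handle the $\rho_i$-components of arbitrary test functions) yields $Lf^* = 0$ distributionally. Since $-V + 5\phi_1^4 \in L^\infty_{\rm loc}(\R^3)$, the weak unique continuation principle for linear second-order elliptic operators (e.g.\ Aronszajn) propagates the vanishing of $f^*$ on the non-empty open set $\{\phi_1 \neq 0\}$ to all of $\R^3$, contradicting $\|f^*\|_{\dot{H}^1} = 1$.

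The most delicate point is the identification of the limit: the strong $\dot{H}^1$ convergence $f_n \to f^*$ hinges on the compactness of multiplication by the $L^{3/2}$ potentials $V$ and $\phi_1^4$, which lets us bypass any delicate spectral analysis at the threshold where $L$ may well carry a zero eigenvalue or zero resonance. A cleaner alternative to the unique continuation step would be to invoke Hardt--Simon / Garofalo--Lin type results stating that the zero set of a nontrivial $C^{1,\alpha}$ solution of a linear second-order elliptic equation in $\R^3$ has vanishing three-dimensional Lebesgue measure, so that $\phi_1^2 f^* = 0$ a.e.\ immediately forces $f^* = 0$ a.e.
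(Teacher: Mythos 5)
Your proof is correct, but it follows a genuinely different route than the paper's. The paper invokes Proposition~3.6 of \cite{DKM3} to produce a basis $E_1,\dots,E_m\in C_0^\infty$ dual to the kernel directions $Z_j$ (with $\int\phi_1^4E_iZ_j=\delta_{ij}$), splits an arbitrary admissible $f$ as $\sum c_iE_i+g$ with $g$ in the coercive subspace, and then separates into two cases depending on whether $\sum|c_i|$ is comparable to $\|f\|_{\dot H^1}$: in one case the $4\epsilon\phi_1^4$ term supplies the lower bound via the finite-dimensional estimate $\int\phi_1^4(\sum c_iE_i)^2\gtrsim\sum c_i^2$, and in the other the cited nondegeneracy bound does the work. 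Your argument instead runs the compactness analysis from scratch: assume failure, extract a normalized sequence with $\langle Lf_n,f_n\rangle\to0$ and $\|\phi_1^2f_n\|_{L^2}\to0$, use compactness of multiplication by the $L^{3/2}$ potentials $V$ and $\phi_1^4$ (from $\dot H^1\to\dot H^{-1}$) to pass to a strong $\dot H^1$ limit $f^*$ of unit norm with $\langle Lf^*,f^*\rangle=0$ and $\int\phi_1^4(f^*)^2=0$, derive $Lf^*=0$ from the Euler--Lagrange equation, and kill $f^*$ by unique continuation. What the paper's route buys is brevity (the compactness is packaged in the cited lemma, and no unique-continuation argument is needed at all); what yours buys is self-containedness and a cleaner treatment of the threshold: you never need to construct the $E_i$'s or even name the generalized kernel elements $Z_j$, because the constraint $\phi_1^2f^*=0$ combined with $Lf^*=0$ collapses everything at once.

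Two small points worth flagging. First, the coercivity $\langle Lf_n,f_n\rangle\ge0$ on $\{f\perp\rho_i\}$ is an $L^2$-spectral statement, whereas $f_n\in\dot H^1$ need not be in $L^2$; one should cut off, project out the $\rho_i$-components (which vanish in the limit since $\rho_i$ decays exponentially), and pass to the limit in the quadratic form, which is routine since $-V+5\phi_1^4\in L^{3/2}$. Second, both your argument and the paper's implicitly require $\phi_1\not\equiv0$ (otherwise $\{\phi_1\neq0\}$ is empty, respectively the paper's normalization $\int\phi_1^4E_iZ_j=\delta_{ij}$ is vacuous); this is consistent with how the claim is used in the surrounding lemma, where the trivial steady state needs no perturbation by $\epsilon\phi_1^4$ to begin with. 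Your ``cleaner alternative'' -- that $\{\phi_1=0\}$ is Lebesgue-null so $\phi_1^2f^*=0$ a.e.\ forces $f^*=0$ without even invoking $Lf^*=0$ -- works especially smoothly here because $\phi_1$ is \emph{radial}: the zero set of the radial profile is discrete by ODE uniqueness (vanishing of $\phi_1$ and $\phi_1'$ at some $r_0$ would force $\phi_1\equiv0$), hence $\{\phi_1=0\}$ is a countable union of spheres. This is more elementary than a Hardt--Simon or Garofalo--Lin nodal-set bound and worth noting explicitly.
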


\begin{proof} 
Consider the functional 
\begin{equation}
\Lambda (f):=\int_{\R^3} (-V+5\phi_1^4)f^2+|\nabla f|^2\,dx
\end{equation}
on the space 
\begin{equation}
X:=\left\{f\in \dot{H}^1:\,\int_{\R^3} f\,\rho_1\,dx=\cdots=\int_{\R^3} f\,\rho_k\,dx=0\right\}.
\end{equation}
Suppose 
\begin{equation*}
\left\{f\in\dot{H}^1:\,(-\Delta -V+5\phi_1^4)f=0\right\}={\rm span}\,\left\{Z_1,\dots,Z_m\right\}. 
\end{equation*}
Using the same arguments as in Proposition 3.6 in \cite{DKM3}, we can find linearly independent $E_1,\dots,E_m\in C_0^{\infty}$, with
\begin{equation}
\forall i=1\dots k,\,\forall j=1\dots m, \,\,\int \rho_i\,E_j=0,\,\, \forall i,j=1\dots m, \int \phi_1^4\,E_i\,Z_j=\delta_{ij},
\end{equation}
such that for any $f$ with
\begin{equation}
\int f\,\rho_i=0,\,\,\int f\,E_j\,\phi_1^4=0,\,\,\forall i=1\dots k, \,\,j=1\dots m,
\end{equation}
one has 
\begin{equation}\label{eq:nondegeneracybound}
\Lambda(f)\ge c\|f\|^2_{\dot{H}^1}.
\end{equation}
For any $f\in X$, we can decompose
\begin{equation}
f=\sum_{i=1}^mc_i \,E_i+g,\quad {\rm with}\,\,\int g\,Y_1=\dots=\int g\,Y_k=0,\,\,\,\int g\,E_1\,\phi_1^4=\dots=\int g\,E_m\,\phi_1^4=0.
\end{equation}
We distinguish two cases. \\

\noindent
{\it Case 1:} \, $\sum\limits_{i=1}^m|c_i|\ge \delta_1 \|f\|_{\dot{H}^1}$ for some small $\delta_1>0$. Then the claimed bound (\ref{eq:cruciallowerbound}) follows from
\begin{equation}
\Lambda(f)\ge 4\epsilon \int \phi_1^4\, f^2\ge \epsilon\int (\sum_{i=1}^m c_iE_i)^2\,\phi_1^4\,dx\ge\epsilon\sum_{i=1}^mc_i^2\ge \delta_2\delta_1\epsilon\, \|f\|^2_{\dot{H}^1},
\end{equation}
for some small $\delta_2>0$, where in the third inequality we have used the equivalence of norms in a finite dimensional space and the linear independence of $E_i$.\\

\noindent
{\it Case 2:}\, $\sum\limits_{i=1}^m|c_i|\ll \|f\|_{\dot{H}^1}$. Then the claimed bound (\ref{eq:cruciallowerbound}) follows directly from the bound (\ref{eq:nondegeneracybound}).
\end{proof}

\section{Construction of the local center-stable  manifold}
\label{sec:3}

In \cite{JiaLiuXu} we showed that there exists a dense open set $\Omega\subset Y$, such that for any potential $V\in\Omega$, there are only finitely many radial steady states and all radial finite energy solutions scatter to one 
of the steady states.  Furthermore, the linearized operator $-\Delta-V+5\vp^4$ has no zero eigenvalues nor zero resonance for any radial steady state $\vp$. Our goal in this section is to study local dynamics around a solution which scatters to an unstable excited state. Recall $$\OR{S}(t)=(S_0(t),S_1(t)), \quad t\in \R$$ denotes the solution flow. Thus given $(u_0,u_1)\in\dot{H}^1\times L^2(\R^{3})$, $\OR{S}(t)(u_0,u_1)$ is the solution to equation (\ref{eq:mainequation}) with initial data $(u_0,u_1)$. To prove our main result, let us first state the  Strichartz estimate for the linear wave equation in dimension 3.
\begin{theorem}\label{thm-Strichartz} 
Let $I$ be a time interval and let $v:I\times \R^3\rightarrow \R$ be a finite energy solution to the wave equation 
\[ (\partial_{tt}-\Delta) v =F\]
with initial data $(v(t_0), \partial_t v(t_0))= (f, g)$ for some $t_0\in I$. Then we have the estimates 
\begin{equation}\label{Strichartz}
\|(v,v_t)\|_{C_t^0 (\dot{H}^1\times L^2)}  + \|v\|_{\st{q}{r} (I\times \R^3)} \leq C(q,r) \left(\|(f,g)\|_{\hl}  +\|F\|_{\st{1}{2}(I\times \R^3)}\right),
\end{equation}
where $2\leq q \leq \infty$ and $2\leq r < \infty$ satisfy the scaling condition 
\begin{equation}\frac{1}{q}+\frac{3}{r}=\frac12 \label{scaling}\end{equation}
 and the wave admissible condition\begin{equation}  \quad \frac{1}{q} +\frac{1}{r}\leq \frac12.\label{wave-admissible}\end{equation}
 Moreover, if $f, g$ and $F$ are radial functions,  (\ref{Strichartz}) holds true when $(q,r)=(2,\infty)$.
 \end{theorem}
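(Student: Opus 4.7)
The plan is to establish the non-endpoint Strichartz estimates via the abstract framework of Keel--Tao, and then handle the radial endpoint $(q,r) = (2,\infty)$ separately using the reduction to a one-dimensional half-line wave equation characteristic of radial data in $\R^3$.

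For the non-endpoint admissible pairs, I would start from the dispersive estimate for the half-wave propagator $U(t) := e^{it\sqrt{-\Delta}}$ applied to frequency-localized data,
\begin{equation*}
\|U(t) P_k g\|_{L^\infty_x(\R^3)} \lesssim 2^{2k}\, |t|^{-1}\, \|P_k g\|_{L^1_x(\R^3)},
\end{equation*}
together with the $L^2$ conservation $\|U(t) g\|_{L^2} = \|g\|_{L^2}$. Splitting the free wave into $\pm$ half-wave components and applying the Keel--Tao $TT^*$ machinery dyadically in frequency, one obtains Strichartz bounds for each wave-admissible pair with $q > 2$, after summation via Littlewood--Paley square function estimates; the scaling condition \eqref{scaling} is precisely what makes these frequency sums close. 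The inhomogeneous estimate then follows from the Christ--Kiselev lemma for $q > 2$, or directly from the bilinear form analysis in Keel--Tao.

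For the radial endpoint $(2,\infty)$, the essential tool is the reduction of the radial 3D wave equation to a 1D problem on the half-line via $w(t,r) := r\, v(t,r)$, which satisfies
\begin{equation*}
(\partial_{tt} - \partial_{rr}) w = rF, \qquad w(t,0) = 0.
\end{equation*}
Odd-reflecting across $r = 0$, I would apply the 1D d'Alembert representation to express $w(t,r)$ explicitly in terms of $w_0 := rf$, $w_1 := rg$, and $rF$. The identity $\|(f,g)\|_{\dot{H}^1 \times L^2}^2 \sim \int_0^\infty \left(|\partial_r w_0|^2 + |w_1|^2\right) dr$ (from integration by parts) transfers the 3D energy into standard 1D Sobolev norms of $w_0,w_1$. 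The quantity $\sup_x |v(t,x)| = \sup_{r>0} |w(t,r)/r|$ is then a difference quotient of $w_0$ together with an integral average of $w_1$, both of which are controlled pointwise in $t$ by the Hardy--Littlewood maximal functions of the corresponding data; the desired $L^2_t$ bound then follows from the $L^2$-boundedness of the maximal operator.

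The main obstacle is the endpoint itself: the Keel--Tao argument famously fails at $(q,r) = (2,\infty)$ in three dimensions, and the estimate is known to be false for general non-radial data due to Knapp-type examples concentrated along a single light ray. The role of the radial assumption is precisely to exclude such focusing behavior and to reduce the problem to one-dimensional transport, where the absence of dispersion is compensated by the exact cancellation in the d'Alembert formula. Particular care is needed near $r = 0$, where $v = w/r$ is a priori indeterminate; the vanishing $w(t,0) = 0$ together with $\partial_r w(t,0)$ being controlled by the energy is what makes the pointwise limit well-defined and uniformly bounded down to the origin.
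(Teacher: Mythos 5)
Your strategy coincides with the paper's: reduce the radial $3$d wave to the $1$d half-line problem via $w(t,r) = r\,v(t,r)$, write $v$ explicitly from the d'Alembert formula, bound $\sup_{r>0}|v(t,r)|$ by maximal functions, and invoke the $L^2$-boundedness of the maximal operator together with Hardy's inequality to convert $\|(\rho f)'\|_{L^2(d\rho)}$ and $\|\rho g\|_{L^2(d\rho)}$ into the $3$d energy norm $\|(f,g)\|_{\dot H^1\times L^2}$. The non-endpoint pairs are cited from \cite{GV}; Keel--Tao is a fine alternative but not the content here.

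Where you genuinely diverge from the paper is in the treatment of the homogeneous $f$-term. After writing $v = \frac{1}{r}\bigl[f(r+t)(r+t) - f(|r-t|)(t-r)\bigr]$, the paper only uses the difference-quotient/maximal-function argument for $r<t$; for $r>t$ it bounds $\frac{r\pm t}{r}\le 2$ and proves, by a separate duality computation (really a Hardy-type inequality on the lateral supremum $\sup_{\rho>t}|f(\rho)|$ and a weighted variant involving $\frac{\rho}{1+\rho}$), that the two resulting pieces lie in $L^2_t$. Your observation that the odd extension of $w_0=\rho f(\rho)$ across $\rho=0$ makes $v(t,r) = \frac{1}{2r}\bigl[w_0(t+r)-w_0(t-r)\bigr]$ a genuine difference quotient for \emph{all} $r>0$ — with $[t-r,t+r]$ always an interval containing $t$ — collapses both cases into the single bound $\sup_r|v(t,r)|\le M[w_0'](t)$, and after Hardy (the integration by parts you mention gives $\int_0^\infty |w_0'|^2\,d\rho = \int_0^\infty \rho^2|f'|^2\,d\rho$ exactly) the estimate follows at once. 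This removes the paper's duality/change-of-variables lemma for $r>t$ entirely and is the cleaner route. The one place to be careful, which your outline is slightly vague about, is that the pointwise bound in the regime $r>t$ is not literally a ``difference quotient of $w_0$ on $(0,\infty)$'': it is a sum $w_0(r+t)+w_0(r-t)$ on the half-line, and it only becomes a difference quotient after the odd reflection, which is where the boundary condition $w(t,0)=0$ enters. Spelling that out makes the argument airtight.
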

 Later, we will call a pair $(q,r), 2\leq q, r\leq \infty$ \textit{admissible} if it satisfies (\ref{scaling}), (\ref{wave-admissible}).

See~\cite{GV} for the proof of this theorem in the nonradial  and non-endpoint case.  
The forbidden endpoint 
  $(q,r)=(2,\infty) $ was found in~\cite{KM}, where it was also proved that for the homogeneous equation ($F\equiv 0$), $ (2,\infty)$ becomes ``admissible" if the initial data is of the form $(v(t_0), \partial_tv(t_0))=(0,g)$ with $g$
 radially symmetric.  For the sake of completeness, in Appendix B, we provide the proof of the  endpoint Strichartz estimate for the inhomogeneous equation with general radial data.  \\

For applications below, we need the following Strichartz estimates for solutions to linear wave equation with potential.
\begin{lemma}\label{lm:strichartzwithpotential}
Take $V\in Y$ such that the operator $-\Delta -V$ has no zero eigenvalues or zero resonance. Denote $P^{\perp}$ as the projection operator to the continuous spectrum of $-\Delta-V$. Denote
\begin{equation}\label{eq:sqrtoperator}
\omega:=\sqrt{P^{\perp}(-\Delta -V)}.
\end{equation}
Let $I$ be a time interval with $t_0\in I$. Then for any $(f,g)\in \HL(R^3)$ and $F\in L^1_tL^2_x(I\times\R^3)$, the solution $\OR{\gamma}(t)$ to the equation
\begin{equation}
\partial_{tt}\gamma+\omega^2\gamma=P^{\perp}F, \quad (t,x)\in I\times \R^3,
\end{equation}
with $\OR{\gamma}(t_0)=P^{\perp}(f,g)$ satisfies 
\begin{equation}\label{eq:strichartzwithpotential}
\|(\gamma,\gamma_t)\|_{C_t^0 (\dot{H}^1\times L^2)}  + \|\gamma\|_{\st{q}{r} (I\times \R^3)} \leq C(q,r) \left(\|(f,g)\|_{\hl}  +\|F\|_{\st{1}{2}(I\times \R^3)}\right),
\end{equation}
where $2\leq q \leq \infty$ and $2\leq r < \infty$ satisfy the conditions (\ref{scaling}) and (\ref{wave-admissible}).  Moreover, if $f, g$ and $F$ are radial functions,  (\ref{eq:strichartzwithpotential}) holds true when $(q,r)=(2,\infty)$.
\end{lemma}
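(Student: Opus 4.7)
The plan is to derive the perturbed Strichartz inequality from the free one (Theorem~\ref{thm-Strichartz}, together with its radial endpoint extension in Appendix~B) by treating the potential term as part of the inhomogeneous forcing. Since $P^{\perp}$ commutes with $\omega^2=P^{\perp}(-\Delta-V)$ and both the initial data $P^{\perp}(f,g)$ and the forcing $P^{\perp}F$ lie in $\mathrm{Range}(P^{\perp})$, one has $\gamma(t)\in\mathrm{Range}(P^{\perp})$ for all $t\in I$; on that invariant subspace $\omega^2$ agrees with $-\Delta-V$, so $\gamma$ actually satisfies the inhomogeneous free wave equation
\begin{equation*}
\partial_{tt}\gamma-\Delta\gamma=V\gamma+P^{\perp}F,\qquad \OR{\gamma}(t_0)=P^{\perp}(f,g).
\end{equation*}
Applying Theorem~\ref{thm-Strichartz} (and Appendix~B in the radial endpoint case) to this free inhomogeneous equation, and using that $P^{\perp}$ is bounded on $\HL$, yields the a priori inequality
\begin{equation*}
\|\gamma\|_{\st{q}{r}(I\times\R^3)}+\|\OR{\gamma}\|_{C^0_t(\HL)}\leq C\bigl(\|(f,g)\|_{\HL}+\|P^{\perp}F\|_{\st{1}{2}(I\times\R^3)}+\|V\gamma\|_{\st{1}{2}(I\times\R^3)}\bigr).
\end{equation*}
The whole problem is thereby reduced to absorbing $\|V\gamma\|_{\st{1}{2}}$ into either the left-hand side or the data.

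To accomplish this absorption I would invoke the local energy decay (Kato-smoothing) estimate for the propagators $\cos(t\omega)P^{\perp}$ and $\omega^{-1}\sin(t\omega)P^{\perp}$ established in~\cite{Marius}, which hold precisely under the present hypotheses ($V\in Y$ with $\beta>2$, no negative or zero eigenvalue, and no zero resonance of $-\Delta-V$). These give a weighted bound of the form
\begin{equation*}
\bigl\|\langle x\rangle^{-\frac12-\delta}\OR{\gamma}\bigr\|_{L^2_tL^2_x}\lesssim \|(f,g)\|_{\HL}+\bigl\|\langle x\rangle^{\frac12+\delta}P^{\perp}F\bigr\|_{L^2_tL^2_x}
\end{equation*}
for some small $\delta>0$. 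Since $|V(x)|\lesssim (1+|x|)^{-\beta}$ with $\beta>2$, the weight $\langle x\rangle^{\frac12+\delta}V(x)$ lies in $L^2_x$ for $\delta$ small, so Cauchy--Schwarz in $x$ combined with a finite partition of $I$ into subintervals dominates $\|V\gamma\|_{\st{1}{2}}$ by $\|\langle x\rangle^{-1/2-\delta}\gamma\|_{L^2_tL^2_x}$, which is in turn controlled by the right-hand side of the smoothing inequality. This closes the circle and yields~(\ref{eq:strichartzwithpotential}) for all non-endpoint admissible pairs.

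The radial endpoint $(q,r)=(2,\infty)$ is the most delicate ingredient, since it fails for the free equation without radial symmetry and relies on the explicit representation of the radial wave propagator in Appendix~B. My approach would be to transfer the free radial endpoint to the perturbed propagator via the wave operators $W_{\pm}\colon L^2(\R^3)\to\mathrm{Range}(P^{\perp})$ constructed in~\cite{Marius}, which intertwine $\omega=W_{\pm}\sqrt{-\Delta}\,W_{\pm}^{*}$ and which, under the present mild decay assumption on $V$, are bounded on the relevant radial Lebesgue and Sobolev spaces. Conjugating by $W_{\pm}$ transports the free radial endpoint from Appendix~B to $e^{\pm it\omega}P^{\perp}$, and a Duhamel formula then delivers the inhomogeneous version. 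As a fallback, one could repeat the proof of Appendix~B verbatim with the perturbed outgoing/incoming resolvents $(\omega^{2}-(\sigma\pm i0)^{2})^{-1}P^{\perp}$ in place of the free resolvents, using the limiting absorption principle from~\cite{Marius}.

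The main technical obstacle is neither the perturbative absorption of $V\gamma$ nor the passage from the homogeneous to the inhomogeneous equation; rather it is the combination of two features that are simultaneously delicate: the weak decay $\beta>2$, which forces us to rely on~\cite{Marius} instead of the classical $L^p$-boundedness of wave operators (which typically requires $\beta>5$), and the radial endpoint $(2,\infty)$, which has no analogue outside the radial class. Once these two inputs are in place, the remaining verification, including the preservation of radial symmetry by $P^{\perp}$ and by the flow, is routine bookkeeping.
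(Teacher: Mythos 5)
Your proposal identifies the right key ingredient—Beceanu's structure theorem for wave operators and the intertwining $\varphi(\omega)=W_+\varphi(|\nabla|)W_+^*$—but deploys it only for the radial endpoint $(2,\infty)$. The paper actually uses this wave-operator conjugation \emph{uniformly} for the whole lemma: it writes the Duhamel representation of $\gamma$, conjugates each term by $W_+$ via the intertwining identity, invokes the $L^p$-boundedness of $W_+$ (which \cite{Marius} does provide, for $1\leq p\leq\infty$), and then simply applies the free Strichartz estimates of Theorem~\ref{thm-Strichartz} and Appendix B. The only technical wrinkle is the $\cos(\omega t)P^\perp f$ term, which the paper handles by writing $P^\perp f=\omega^{-1}\tilde f$ with $\tilde f\in P^\perp L^2$ (justified by the linear isomorphism $\omega:P^\perp\dot H^1\to P^\perp L^2$, Claim~\ref{claim:linearisomorphism}) and using $W_+^*\omega^{-1}=|\nabla|^{-1}W_+^*$. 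Your ``fallback'' is in fact the cleaner and complete argument; I would promote it to the main proof.

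Your primary route—treating $V\gamma$ as a forcing term and absorbing it via a Kato smoothing estimate—has two genuine gaps. First, the citation is off: \cite{Marius} proves an $L^p$ structure theorem for wave operators, not a local energy decay estimate for $\cos(t\omega)P^\perp$ and $\omega^{-1}\sin(t\omega)P^\perp$; such smoothing bounds exist in the literature but are a separate input and would need to be cited and matched carefully to the hypothesis $\beta>2$. Second, and more fundamentally, the absorption does not close as written. The free Strichartz inequality you are invoking takes the forcing in $L^1_tL^2_x$, whereas the smoothing estimate controls $\|\langle x\rangle^{-1/2-\delta}\gamma\|_{L^2_tL^2_x}$; passing from $L^2_t$ to $L^1_t$ via Cauchy--Schwarz and ``a finite partition of $I$'' only works when $I$ has finite length, but the applications in this paper (Step~1 of Theorem~\ref{th:localmanifold}, for instance) take $I=[T,\infty)$. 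To repair this one would need a mixed Strichartz/smoothing inhomogeneous estimate that accepts the source split as $G_1+G_2$ with $G_1\in L^1_tL^2_x$ and $\langle x\rangle^{1/2+\delta}G_2\in L^2_tL^2_x$—a nontrivial additional ingredient that is not stated or proved in your sketch. The wave-operator conjugation avoids this entirely by reducing directly to the free estimates.
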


\smallskip
\noindent
{\it Remark.} Our proof of Strichartz estimates (\ref{eq:strichartzwithpotential}) relies crucially on $L^p$ boundedness of wave operators with minimal decay conditions on $V$, obtained by Beceanu \cite{Marius}. For earlier important work on $L^p$ boundedness of wave operators, see Yajima\cite{Yajima} and references therein.
\begin{proof} The energy estimate on $\OR{\gamma}$ follows from standard integration by parts argument and inequality (\ref{eq:l2boundforomega}) below. We will therefore concentrate only on the Strichartz estimates.
Recall the definition of the forward wave operator
\begin{equation}
W_+:=s-\lim_{t\to\infty} e^{it(-\Delta-V)}e^{-it(-\Delta)}.
\end{equation}
It is well known that for $V\in Y$, $W_+$ is linearly isomorphic from $L^2$ to $P^{\perp}L^2$, see for example \cite{agmon2}. In \cite{Marius} Beceanu obtained among other things an important structural theorem for $W_+$, which implies that $W_+$ is also bounded in $L^p$ spaces with $1\leq p\leq \infty$:
\begin{equation}\label{eq:lpbound}
\|W_+\varphi\|_{L^p}\lesssim \|\varphi\|_{L^p}\quad \forall \varphi\in L^2\cap L^p.
\end{equation}
Hence $W_+$ can be naturally extended as a bounded operator in $L^p$ for $1\leq p<\infty$ and in $L^{\infty}_0$, which is the completion of $L^2\cap L^{\infty}$ in $L^{\infty}$. An important fact of $W_+$ is the intertwining property
\begin{equation}\label{eq:intertwine}
\varphi(\omega)=W_+\varphi(|\nabla|)W^{\ast}_+,
\end{equation}
which holds for $\varphi\in L^{\infty}$, and also for more general $\varphi$ by limiting arguments provided that one can obtain suitable bounds. Here $W^{\ast}_+$ is the adjoint operator of $W_+$. Note that $\OR{\gamma}(t)$ admits the following representation
\begin{eqnarray*}
\gamma(t)&=&\cos{(\omega t)}\,P^{\perp}f+\frac{\sin{\omega t}}{\omega} P^{\perp}g+\int_0^t\frac{\sin{\omega (t-s)}}{\omega} P^{\perp}F(s)\,ds\\
&=&W_+\cos{(|\nabla|t)}W_+^{\ast}\,f+W_+\frac{\sin{|\nabla| t}}{|\nabla|} W_+^{\ast}\,g+W_+\int_0^t\frac{\sin{|\nabla| (t-s)}}{|\nabla|}W_+^{\ast} P^{\perp}F(s)\,ds\\
&=&I+II+III.
\end{eqnarray*}
The Strichartz estimates for part II and III then follow directly from the $L^p$ boundedness (\ref{eq:lpbound}) of $W_+$ and the corresponding Strichartz estimates for free radiations. For the endpoint $(q,r)=(2,\infty)$, we only need to note in addition that $P^{\perp}$ leaves the space of radial functions invariant, since $V$ is radial. It remains to consider I. Firstly we claim
\begin{claim}\label{claim:linearisomorphism}
The operator $\omega$ which is initially defined in $H^2$, satisfies 
\begin{equation}\label{eq:l2boundforomega}
\|\nabla \varphi\|_{L^2}^2\lesssim\int(\omega \varphi)^2\lesssim \|\nabla \varphi\|_{L^2}^2,\quad\forall \varphi\in P^{\perp}H^2.
\end{equation}
and can be extended naturally as a linear isomorphism from $P^{\perp}\dot{H}^1$ to $P^{\perp}L^2$.
\end{claim}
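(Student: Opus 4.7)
The plan is to first establish the quadratic form identity
\[
\int(\omega\varphi)^2\,dx \;=\; \int|\nabla\varphi|^2\,dx \;-\; \int V\varphi^2\,dx, \qquad \varphi\in P^{\perp}H^2,
\]
and then to prove both inequalities of the norm equivalence. The identity is immediate from $\omega^2=P^{\perp}(-\Delta-V)$ together with $P^{\perp}\varphi=\varphi$ after integration by parts. For the upper bound, the assumption $V\in Y$ with $\beta>2$ guarantees $V\in L^{3/2}(\R^3)$, so H\"older's inequality and the Sobolev embedding $\dot H^1\hookrightarrow L^6$ yield $\bigl|\int V\varphi^2\bigr|\lesssim\|V\|_{L^{3/2}}\|\nabla\varphi\|_{L^2}^2$, whence $\int(\omega\varphi)^2\lesssim\|\nabla\varphi\|_{L^2}^2$.

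The lower bound is where the absence of zero eigenvalues and zero resonance enters decisively. I would argue by contradiction: if no $c>0$ works, pick $\{\varphi_n\}\subset P^{\perp}H^2$ with $\|\nabla\varphi_n\|_{L^2}=1$ and $\int(\omega\varphi_n)^2\to 0$, and extract a weak limit $\varphi_n\rightharpoonup\varphi$ in $\dot H^1$. The decay $\beta>2$ renders multiplication by $V$ compact from $\dot H^1$ to $\dot H^{-1}$: split $V=V\mathbf{1}_{|x|\le R}+V\mathbf{1}_{|x|>R}$, apply Rellich--Kondrachov to the first piece and a small-norm $L^{3/2}$ bound to the second. Hence $\int V\varphi_n^2\to\int V\varphi^2=1$, forcing $\varphi\neq 0$. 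Weak lower semicontinuity combined with $\int|\nabla\varphi_n|^2-\int V\varphi_n^2\to 0$ upgrades the weak convergence to strong convergence in $\dot H^1$, yielding in the limit $(-\Delta-V)\varphi=0$ with $\varphi\in P^{\perp}\dot H^1\setminus\{0\}$. Such a $\varphi$ is either a zero-energy eigenfunction (if $\varphi\in L^2$) or a zero resonance (if $\varphi\in\dot H^1\setminus L^2$), both excluded by assumption; this is the contradiction.

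Density of $P^{\perp}H^2$ in $P^{\perp}\dot H^1$ follows from boundedness of $P^{\perp}$ on $\dot H^1$, itself a consequence of the smoothness and rapid decay of the finitely many bound states of $-\Delta-V$. The norm equivalence then extends $\omega$ by continuity to a bounded, injective map $P^{\perp}\dot H^1\to P^{\perp}L^2$ with closed range. For surjectivity I would use the intertwining identity \eqref{eq:intertwine}: given $g\in P^{\perp}L^2$, set $h:=W_+^{\ast}g\in L^2$ and $\varphi:=W_+\bigl(|\nabla|^{-1}h\bigr)$. By the $\dot H^1$-boundedness of $W_+$, a consequence of Beceanu's structure theorem in \cite{Marius}, $\varphi\in P^{\perp}\dot H^1$, and \eqref{eq:intertwine} gives $\omega\varphi=W_+|\nabla|W_+^{\ast}W_+|\nabla|^{-1}h=W_+h=P^{\perp}g=g$, completing the isomorphism.

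The main obstacle is the coercive lower bound. Since the continuous spectrum of $-\Delta-V$ on $P^{\perp}L^2$ reaches down to the origin, there is no spectral gap to be exploited, and mere positivity of the quadratic form does not compare it with the homogeneous Sobolev norm. What rescues the comparison is precisely that any sequence defeating coercivity concentrates its mass at zero energy, and the decay $\beta>2$ makes $V$-multiplication compact enough to extract a genuine zero-energy state in the limit, which the hypothesis forbids.
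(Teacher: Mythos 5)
Your proof is correct in substance and your compactness argument for the coercive lower bound is exactly the mechanism the paper tacitly invokes when it asserts the bound ``follows from the assumption that $-\Delta-V$ has no zero eigenvalues or zero resonance''; you have supplied the details the paper omits (compactness of $V$-multiplication from $\dot H^1$ to $\dot H^{-1}$ when $\beta>2$, weak lower semicontinuity, and identification of the limit as a zero-energy state). One small simplification: you do not actually need to upgrade to strong convergence. Once you know $\varphi\neq 0$, $\|\nabla\varphi\|_{L^2}^2\le 1$, and $\int V\varphi^2=1$, the nonnegativity of the quadratic form on $P^\perp$ forces $\|\nabla\varphi\|_{L^2}^2=\int V\varphi^2$, and then testing the Euler--Lagrange relation against arbitrary $\psi\in\dot H^1$ (the components along $\rho_i$ vanish automatically since $\varphi\perp\rho_i$ and $(-\Delta-V)\rho_i=-k_i^2\rho_i$) already yields $(-\Delta-V)\varphi=0$.

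Where you diverge from the paper is in the surjectivity step. The paper's route is more elementary and avoids wave operators entirely: once $\omega$ is known to be injective with closed range, one observes that $\operatorname{ran}(\omega)\supseteq\operatorname{ran}(\omega^2)=\operatorname{ran}\bigl(P^\perp(-\Delta-V)\bigr)$, and the latter is dense in $P^\perp L^2$ because the self-adjoint operator $P^\perp(-\Delta-V)$ has trivial kernel on $P^\perp L^2$ (no zero eigenvalues). Closed plus dense gives all of $P^\perp L^2$. Your intertwining argument also works, but be careful with the justification of $W_+\colon\dot H^1\to\dot H^1$: Beceanu's structure theorem as cited gives $L^p$-boundedness, not directly $\dot H^1$-boundedness. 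What rescues your argument is that $\dot H^1$-boundedness of $W_+$ can be \emph{derived} from the $L^2$-boundedness plus the intertwining $\omega W_+=W_+|\nabla|$ plus the norm equivalence you have just proved, so the logic is not circular, but the attribution to the structure theorem alone is slightly misplaced. Given that the dense-range observation is shorter and more self-contained, you might prefer it.
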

Let us assume this claim momentarily. Then the Strichartz estimate for part I is easy to prove. By Claim \ref{claim:linearisomorphism} we can write $P^{\perp}f=\omega^{-1}\tilde{f}$ for some $\tilde{f}\in P^{\perp}L^2$. Thus 
\begin{eqnarray*}
W_+\cos{(|\nabla|t)}W_+^{\ast}\,f&=&W_+\cos{(|\nabla|t)}W_+^{\ast}\,\omega^{-1}\tilde{f}\\
                                                  &=&W_+\cos{(|\nabla|t)}|\nabla|^{-1}W_+^{\ast}\,\tilde{f},
\end{eqnarray*}
where we have used the fact that $W_+^{\ast}\omega^{-1}=|\nabla|^{-1}W_+^{\ast}$ on $P^{\perp}L^2$, which follows from the intertwining property (\ref{eq:intertwine}) by suitable limiting arguments with the help of bounds (\ref{eq:l2boundforomega}) and (\ref{eq:lpbound}). Note that $W_+^{\ast}\tilde{f}\in L^2$, consequently $|\nabla|^{-1}W_+^{\ast}\tilde{f}\in\dot{H}^1$. Hence the Strichartz estimates for part I follow straightforwardly from the corresponding estimates for free radiations and the bound (\ref{eq:lpbound}).
\end{proof}

Now we give a brief proof of Claim \ref{claim:linearisomorphism}. The second part of the inequality (\ref{eq:l2boundforomega}) is an easy consequence of the fact that $\omega$ is self adjoint and $\omega^2=P^{\perp}(-\Delta-V)$, and an integration by parts argument. The first part of the inequality follows from the assumption that $-\Delta-V$ has no zero eigenvalues or zero resonance. Hence $\omega$ can be extended as a bounded operator from $P^{\perp}\dot{H}^1$ to $P^{\perp}L^2$. Moreover $\omega$ has closed range due to the bound (\ref{eq:l2boundforomega}). Since $-\Delta-V$ has no zero eigenvalues, we conclude that the range of $P^{\perp}(-\Delta-V)$ is dense in $P^{\perp}L^2$. Thus the range of $\omega$ which is bigger than the range of $P^{\perp}(-\Delta-V)$ is also dense in $P^{\perp}L^2$. Combining these two facts, we see that $\omega$ is indeed a linear isomorphism from $P^{\perp}\dot{H}^1$ to $P^{\perp}L^2$ and the claim is proved.\\

\medskip  
  
  Our main goal in this section is to prove the following result.
\begin{theorem}\label{th:localmanifold}
Let $\Omega$ be a dense open subset of $Y$ such that equation (\ref{eq:mainequation}) has only finitely many radial steady states, all of which are hyperbolic. Suppose $V\in \Omega\subset Y$. Suppose $\overrightarrow{U}(t)$ is a radial finite energy solution to equation (\ref{eq:mainequation}) which scatters to an unstable steady state $(\phi,0)$. Let 
\begin{equation}
-k_1^2\leq-k_2^2\leq\cdots\leq-k_n^2<0
\end{equation}
be the negative eigenvalues of $-\Delta-V+5\vp^4$ restricted to radial functions (counted with multiplicity) with normalized eigenfunctions $\rho_1,\,\rho_2,\dots,\rho_n$, respectively. Decompose 
\begin{equation}
\RHL(\R^3)=X_s\oplus X_u,
\end{equation}
where 
\begin{equation}
X_s=\left\{(u_0,u_1)\in \RHL(\R^3):\,\langle k_ju_0+ u_1,\rho_j\rangle_{L^2}=0,\,{\rm for\,all}\,1\leq j\leq n\right\},
\end{equation}
and 
\begin{equation}
X_u={\rm span}\,\left\{(\rho_j,k_j\rho_j),\,\,1\leq j\leq n\right\}.
\end{equation}
Then there exist $\epsilon_0>0$, $T$ sufficiently large, a ball $B_{\epsilon_0}((0,0))\subset \RHL(\R^3)$, and a smooth mapping
\begin{equation}
\Psi: \overrightarrow{U}(T)+\left(B_{\epsilon_0}((0,0))\cap X_s\right)\longrightarrow \RHL,
\end{equation}
satisfying $\Psi(\overrightarrow{U}(T))=\overrightarrow{U}(T)$, with the following property. Let $\widetilde{\mathcal{M}}$ be the graph of $\Psi$ and set $\mathcal{M}=\OR{S}(-T)\widetilde{\mathcal{M}}$. Then any solution to equation (\ref{eq:mainequation}) with initial data $(u_0,u_1)\in\mathcal{M}$ scatters to $(\vp,0)$. Moreover, there is an $\epsilon_1$ with $0<\epsilon_1<\epsilon_0$, such that if a solution $\overrightarrow{u}(t)$ with initial data $(u_0,u_1)\in B_{\epsilon_1}(\overrightarrow{U}(0))\subset \RHL(\R^3)$ satisfies 
\begin{equation}
\|\overrightarrow{u}(t)-\overrightarrow{U}(t)\|_{\dot{H}^1\times L^2}<\epsilon_1 \,\,{\rm for\,\,all\,\,}t\ge 0,
\end{equation}
then $(u_0,u_1)\in \mathcal{M}$.
\end{theorem}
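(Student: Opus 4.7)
\medskip

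\noindent\textbf{Proof proposal.} The plan is to set up a fixed-point argument for the difference $w(t) := u(t) - U(t)$ on the half-line $[T,\infty)$ with $T$ chosen sufficiently large, which will produce the manifold $\widetilde{\mathcal{M}}$ at time $T$; the manifold $\mathcal{M}$ at time $0$ is then defined by pulling back via $\OR{S}(-T)$. Since $U$ itself solves (\ref{eq:mainequation}), $w$ satisfies
\begin{equation*}
\partial_{tt}w + \mathcal{L}_\phi w \;=\; -\tilde N(U,w) - 5(U^4 - \phi^4)\,w,
\end{equation*}
where $\mathcal{L}_\phi = -\Delta - V + 5\phi^4$ and $\tilde N(U,w) := 10U^3 w^2 + 10 U^2 w^3 + 5 U w^4 + w^5$ is at least quadratic in $w$. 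The hypothesis that $\OR{U}$ scatters to $(\phi,0)$ gives $U - \phi \in L^5_t L^{10}_x([0,\infty)\times\R^3)$, so for any $\delta > 0$ we may choose $T$ large enough that $\|U-\phi\|_{L^5_t L^{10}_x([T,\infty))} < \delta$. This dispersive smallness (rather than energy smallness, which need not hold since $\OR{U}(T)$ is not necessarily close to $(\phi,0)$ in $\HL$) will be the single smallness used to close the contraction.

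\smallskip

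Next I would spectrally decompose $w$ with respect to $\mathcal{L}_\phi$, writing $w(t,x) = \sum_{j=1}^n a_j(t)\rho_j(x) + w_c(t,x)$ with $w_c = P_c w$ on the continuous spectrum; the eigenfunctions $\rho_j$ decay exponentially. Passing to the hyperbolic coordinates $y_j^\pm := a_j \pm \dot a_j/k_j$, the scalar equations $\ddot a_j - k_j^2 a_j = -\langle F,\rho_j\rangle$ (with $F$ denoting the right-hand side above) decouple into $\dot y_j^- + k_j y_j^- = \langle F,\rho_j\rangle/k_j$, which I solve forward from $t = T$ with prescribed data, and $\dot y_j^+ - k_j y_j^+ = -\langle F,\rho_j\rangle/k_j$, for which the center-stable requirement forces
\begin{equation*}
y_j^+(t) \;=\; -\frac{1}{k_j}\int_t^\infty e^{k_j(t-s)}\langle F(s),\rho_j\rangle\,ds.
\end{equation*}
The continuous part $w_c$ satisfies $\partial_{tt}w_c + \omega^2 w_c = P_c F$ and is controlled by Lemma \ref{lm:strichartzwithpotential} applied to the effective potential $V - 5\phi^4 \in Y$ (which has no zero eigenvalue or resonance by hyperbolicity of $\phi$), yielding the full Strichartz estimates including the radial endpoint $(2,\infty)$. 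Reading off the stable components $(y_j^-(T), P_c \OR{w}(T))$ from a perturbation $(f,g) \in X_s$ at time $T$, these three ingredients define a solution map $\mathcal T$ on the space $L^\infty_t(\HL)\cap L^5_t L^{10}_x([T,\infty))$, whose fixed point is the desired $w$.

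\smallskip

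To run the contraction I need $\|\tilde N(U,w)\|_{L^1_t L^2_x([T,\infty))} + \|(U^4-\phi^4)w\|_{L^1_t L^2_x([T,\infty))}$ to be bounded by a small multiple of $\|w\|_{L^5_t L^{10}_x}$, together with absolute integrability of $\langle F,\rho_j\rangle$ in time (which follows from exponential decay of $\rho_j$ combined with local spacetime integrability of $F$). For the $L^1_t L^2_x$ bound I would expand $U = \phi + v$ with $v := U - \phi$ and handle each monomial $\phi^a v^b w^c$ separately: purely-$w$ monomials $\phi^a w^{5-a}$ are controlled via pointwise bounds on $\phi$ (a bounded function decaying like $|x|^{-1}$, hence in every $L^p$ with $p>3$, see Appendix~A) combined with Strichartz norms of $w$, while monomials containing a factor of $v$ pick up a gain of $\delta$ from the smallness of $v$. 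Smoothness of the parametrizing map $\Psi$ then follows from the implicit function theorem applied to the polynomial fixed-point equation.

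\smallskip

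For the uniqueness statement, suppose $\OR{u}(t)$ stays within $\epsilon_1$ of $\OR{U}(t)$ for all $t \geq 0$, so the unstable coordinates $y_j^+(t)$ of $w = u - U$ remain bounded in $t$. Integrating the ODE $\dot y_j^+ - k_j y_j^+ = -\langle F,\rho_j\rangle/k_j$ forward from $T$ and using boundedness of $y_j^+$ together with the growth factor $e^{k_j t}$ forces $y_j^+(T)$ to equal precisely the backward-Duhamel value appearing in the fixed-point construction; hence $\OR{u}(T) \in \widetilde{\mathcal{M}}$ and $(u_0,u_1) \in \mathcal{M}$. The main technical obstacle is the $L^1_t L^2_x$ nonlinear estimate: the background $\OR{U}$ need not be close to $(\phi,0)$ in energy, and the stationary factors $\phi^a$ arising from the binomial expansion do not live in any Strichartz space $L^q_t L^r_x$ with $q<\infty$, so the contraction must be closed by exploiting a careful combination of pointwise bounds on $\phi$ with the dispersive smallness of $v = U - \phi$ on $[T,\infty)$.
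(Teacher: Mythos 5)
Your overall strategy coincides with the paper's: spectrally decompose the difference $w=u-U$ with respect to $\mathcal{L}_\phi$, pass to hyperbolic coordinates on the discrete part, impose the stability condition on the unstable modes via a backward Duhamel formula, control the continuous part via the Strichartz estimates of Lemma~\ref{lm:strichartzwithpotential}, and close a contraction on $[T,\infty)$ with $T$ large; the uniqueness statement then follows by observing that boundedness of the unstable coordinates forces the stability condition. That much is right. However, there are two genuine gaps, both of which the paper addresses explicitly.

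First, the assertion ``the hypothesis that $\OR{U}$ scatters to $(\phi,0)$ gives $U-\phi\in L^5_tL^{10}_x([0,\infty)\times\R^3)$'' is not an immediate consequence of (\ref{eq:convergenceinenergy}). Scattering only gives convergence of $\OR{U}(t)-\OR{U}^L(t)$ to $(\phi,0)$ in $\HL$; since $\OR{U}(T)$ need not be close to $(\phi,0)$ in energy, one cannot bootstrap a global Strichartz bound for $h=U-\phi$ from energy smallness. This is precisely the content of Step~1 of the paper (together with Step~0 and Claim~\ref{Claim}), which requires a nontrivial auxiliary lemma showing that the $\omega$-evolution $\Gamma(t-T)\,\vec{\gamma}(T)$ has small $L^2_tL^\infty_x$ norm on $[T,\infty)$ for $T$ large, because $\vec{\gamma}(T)\approx P^\perp\OR{U}^L(T)$ and the free radiation disperses. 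The paper flags this around (\ref{eq:strichartzradiation}) as a point that ``was not usually mentioned in the literature.'' Treating it as given is the first missing ingredient.

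Second, the contraction cannot be closed in the space $L^\infty_t(\HL)\cap L^5_tL^{10}_x$. Consider the lowest-order contribution to the linear-in-$w$ coefficient, namely $\phi^3\,h\,w$ (coming from $5(U^4-\phi^4)w$ with $h=U-\phi$), which you must bound in $L^1_tL^2_x$. If $h,w\in L^5_tL^{10}_x$ then $hw\in L^{5/2}_tL^5_x$, and Hölder would require $\phi^3\in L^{5/3}_tL^{10/3}_x$, which is impossible since $\phi$ is time-independent. The same issue appears in the purely-$w$ term $\phi^3 w^2$. In fact, requiring $1/q_1+1/q_2=1$ for two admissible exponents $q_i\ge 2$ forces $q_1=q_2=2$ and hence $r_1=r_2=\infty$: the radial endpoint $L^2_tL^\infty_x$ is not a convenience but a necessity. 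You correctly identify this obstruction in your closing paragraph (``the stationary factors $\phi^a$ \dots do not live in any Strichartz space $L^q_tL^r_x$ with $q<\infty$''), but ``pointwise bounds on $\phi$'' alone cannot repair it, since the problem is temporal, not spatial. The paper resolves it by working in the norm $X=\sum_i\|\lambda_i\|_{L^\infty\cap L^2_t}+\|\gamma\|_{L^\infty_t\dot H^1\cap L^2_tL^\infty_x}$ and using $\|\phi^3 h\,\eta\|_{L^1_tL^2_x}\le \|\phi\|_{L^6}^3\|h\|_{L^2_tL^\infty_x}\|\eta\|_{L^2_tL^\infty_x}$, placing both dynamical factors at the endpoint $(2,\infty)$, which is available precisely because of radiality (Appendix~B). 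Once one works in this space and has Step~1 in hand, the rest of your argument (stability condition, smooth parametrization, and the uniqueness via the forward ODE for $y_j^+$) matches the paper's.
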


\begin{proof}By assumption, there exists  free radial radiation $\OR{U}^L$, such that
\begin{equation}\label{scatter-vp}
\lim_{t\to  \infty}\|\OR{U}(t)-(\phi,0)- \OR{U}^L(t)\|_{\dot{H}^1\times L^2}=0.
\end{equation}
We divide  our construction of the center-stable  manifold into a series of  steps:

\smallskip

\noindent
{\it Step 0: $L^6$ decay for free waves.} We observe that for any finite energy free radiation $\OR{u}^L$, we have  
\begin{equation} \|u^L(t)\|_{L^6_x} \rightarrow 0 \quad\quad \text{ as }\quad t\rightarrow \infty. \label{linear-small}\end{equation}
For smooth $\OR{u}^L(0)$ with ${\rm supp\,}\OR{u}^L(0)\Subset B_R$, we have 
\begin{equation*}
|u^L(t,x)|\leq \frac{C}{t}\chi_{t-R\leq|x|\leq t+R},\,\,\, {\rm for}\,\, t>R.
\end{equation*}
Then (\ref{linear-small}) follows by direct calculation. For general initial data, (\ref{linear-small}) follows from approximations by compactly supported smooth functions and the uniform bound $\|u^L(t)\|_{L^6(\R^3)}\leq C\|\OR{u}^L(0)\|_{\HL(\R^3)}$.

\smallskip

\noindent
{\it Step 1: space-time estimates for $U-\vp$.}
Denote $h(t,x) =U(t,x) -\vp(x)$, then $h$ satisfies the equation 
\begin{equation}h_{tt}-\Delta h -V(x)h + 5\vp^4 h +N(\vp, h)=0, \label{eq-h}\end{equation}
where 
\begin{equation*}
N(\vp, h) =(\vp+h)^5 -\phi^5 -5\vp^4h.
\end{equation*}
Since $\OR{U}\in L_t^\infty([0,\infty), \hl)$ and $U \in L^5_tL^{10}_x(I\times \R^3)$ for any finite interval $I$, by equation (\ref{eq:mainequation}) and Strichartz estimate (\ref{Strichartz}) we see $U\in L^2_tL^{\infty}_x(I\times \R^3)$. By standard elliptic estimates, we know that $\phi \in C^1(\R^3)$(see also Appendix A),  hence $h\in L^2_tL^{\infty}_x(I\times \R^3)$ for any finite time interval $I$.  In what follows, we will show that
\[\|h\|_{L^2_tL^{\infty}_x([0,\infty)\times \R^3)} <\infty.\]
Recall that $\rho_1, \ldots, \rho_n$ are the $n$ radial $L^{2}$ normalized orthogonal eigenfunctions of the operator $$\mathcal{L}_{\vp}=-\Delta - V + 5\vp^4,$$ corresponding to the eigenvalues (counting multiplicity) $-k_1^2\leq \ldots -k_n^2 <0$, respectively. From Agmon's estimate \cite{agmon}, we know these eigenfunctions decay exponentially. Writing
\begin{equation*}
h = \lambda_1(t)\rho_1 +\cdots + \lambda_n(t)\rho_n +\gamma,
\end{equation*}
with $\gamma\perp \rho_i$ for $ i=1,\cdots ,n$,  and plugging this  into equation (\ref{eq-h}) we obtain 
\begin{align}\label{eq:parametrizedequation}
 \sum_{i=1}^n(\ddot{\lambda}_i(t) -k_i^2 \lambda_i(t))\rho_i + \ddot{\gamma} +\mathcal{L}_{\vp}\gamma = N(\vp,h).
 \end{align}
 Denote by $P_i$   the projection operator onto the $i-$th eigenfunction and by $P^{\perp}$   the projection operator  onto the continuous spectrum restricted to radial functions~\footnote{Note that $P^{\perp}$ can be written via Stone's formula as integral of resolvant of $\mathcal{L}_{\phi}$, hence it is invariant for radial functions. In particular, $P^{\perp}$ does not involve any non-radial eigenfunctions.}, i.e., 
 \begin{align*}
 P_i =\rho_i \otimes \rho_i , \quad\quad
 P^{\perp} =I -\sum_{i=1}^n\rho_i \otimes \rho_i 
 \end{align*}
Applying the projection operators $P_i$ and $P^{\perp}$ to equation (\ref{eq:parametrizedequation}), we derive the following equations for $\lambda_i(t)$ and $\gamma(t,x)$: 
 \begin{equation}\label{system}\left\{\begin{aligned}
 \ddot{\lambda}_i(t) -k_i^2 \lambda_i(t) & = P_i N(\vp,h):=N_{\rho_i},\quad\quad i=1,\ldots n\\
 \ddot{\gamma} +\omega^2 \gamma &=P^{\perp} N(\vp,h) := N_c, \quad \quad\omega:=\sqrt{P^{\perp}\mathcal{L}_{\vp}}.
 \end{aligned}\right.\end{equation}
 Note that the steady state $\vp$ decays at the rate $O(\frac{1}{1+|x|})$ as $|x|\to\infty$, hence the potential in the operator $\mathcal{L}_{\vp}$ which is $-V+5\vp^4$, decays like $O(\frac{1}{(1+|x|)^{\min\{\beta,4\}}})$. This decay rate is better than the critical rate $O(\frac{1}{|x|^2})$ as $|x|\rightarrow \infty$ (in fact $-V+5\vp^4\in Y$). Hence we can  apply the result of Lemma \ref{lm:strichartzwithpotential} and conclude that Strichartz estimates as in Lemma \ref{lm:strichartzwithpotential} hold for solutions of the equation
\begin{equation}\label{eq:linearequationwithpotential}
v_{tt}+\omega^2 v =F,
\end{equation}
with $F$ radial and satisfying the compatibility condition $P^{\perp}F=F$. 

From (\ref{scatter-vp}) and (\ref{linear-small}), we know that $$\lim_{t\rightarrow \infty}\|h(t,x)\|_{L^\infty_tL^6_x([T,\infty)\times \R^3)} =0 $$ Also  using the fact that $\rho_i$ decay exponentially, we   have 
\[|\lambda_i(t)|=|\la \rho_i| h\ra |\leq \|\rho_i\|_{L^{\frac65}} \|h(t,x)\|_{L^6_x(\R^3)}\rightarrow 0 \hspace{1cm}\text{ as }t\rightarrow \infty .\]
Let $\Gamma(t)$ be the solution operator to the  equation $v_{tt}+\omega^2 v=0$, i.e., 
\[\Gamma(t-t_0)(\gamma(t_0), \dot{\gamma}(t_0)) = \cos (\omega (t-t_0))\gamma(t_0)+\frac{1}{\omega}\sin (\omega (t-t_0))\dot{\gamma}(t_0).\]
We claim:
\begin{claim}\label{Claim} Given any $\epsilon\ll 1$, we have  \[\|\Gamma(t-T)(\gamma(T), \dot{\gamma}(T))\|_{\st{2}{\infty}( [T,\infty)\times \R^3)} \leq \epsilon.  \]
for sufficiently large $T >0$.
\end{claim}
We postpone the proof of Claim~\ref{Claim} to the end of the proof of Theorem~\ref{th:localmanifold}.

Hence given a small positive number $\epsilon \ll 1$, which will be chosen later,  we can pick a large time $T=T(\epsilon,U)$, such that 
\begin{align}
\|h\|_{\st{\infty}{6}([T,\infty)\times \R^3)}& \leq \epsilon \label{h-small}\\
\|\lambda_i(t)\|_{L^\infty_t ([T,\infty))} &\leq \epsilon \hspace{1cm}    \label{small-lambda} \\
  \|\Gamma(t-T)(\gamma(T), \dot{\gamma}(T))\|_{\st{2}{\infty}( [T,\infty)\times \R^3)} &\leq \epsilon. \label{small-gamma}
\end{align}
From the equation for $\lambda_i(t)$ in (\ref{system}), we conclude that for $t\geq T$
\begin{align*}
\lambda_i(t)= & \cosh (k_i(t-T))\lambda_i(T)+\frac{1}{k_i}\sinh(k_i(t-T)) \dot{\lambda}_i(T)\\ &+\frac{1}{k_i}\int_T^t \sinh (k_i(t-s))N_{\rho_i}(s)\, ds\\
= & \frac{e^{k_i(t-T)}}{2}\left[\lambda_i(T)+\frac{1}{k_i}\dot{\lambda}_i(T) +\frac{1}{k_i}\int_T^t e^{k_i(T-s)}N_{\rho_i}(s)\, ds\right]+\mathcal{R}(t),
\end{align*}
where $\mathcal{R}(t)$ denotes a term  that remains bounded for bounded $N_{\rho_i}(s)$. By (\ref{small-lambda}), 
 we obtain the following stability condition  
\begin{equation}\label{condition} \dot{\lambda}_i(T)= - k_i\lambda_i(T)- \int_T^\infty e^{k_i(T-s)}N_{\rho_i}(s)ds. \end{equation}
Under this condition we can rewrite equation (\ref{system}) as the following integral equation 
\begin{equation}\label{system-1}\left\{\begin{aligned}
\lambda_i(t)&=e^{-k_i(t-T)}\left[\lambda_i(T)+\frac{1}{2k_i} \int_T^\infty e^{k_i (T-s)}N_{\rho_i}(s)ds\right]-\frac{1}{2k_i}\int_T^\infty e^{-k_i|t-s|}N_{\rho_i}(s)ds, \\
\gamma(t)&=\cos (\omega (t-T))\gamma(T)+\frac{1}{\omega}\sin (\omega (t-T))\dot{\gamma}(T)+\int_T^t\frac{\sin (\omega(t-s))}{\omega}N_c (s)ds. 
\end{aligned}\right.\end{equation}
For any time $\widetilde{T}>T$, we define 
\begin{align*} \|(\lambda_1,\ldots, \lambda_n, \gamma)\|_{X([T,\widetilde{T}))}:&= \sum_{i=1}^n\|\lambda_i(t)\|_{L_t^2([T, \widetilde{T}))} + \|\gamma\|_{\st{2}{\infty}([T,\widetilde{T})\times \R^3)}.
\end{align*}
With the help of Strichartz estimates from Lemma \ref{lm:strichartzwithpotential}, by estimating (\ref{system-1}), we get that
\begin{align}
\|\lambda_i(t)\|_{L^2([T, \widetilde{T}))}\leq &C_1\left( |\lambda_i(T)| + \|N_{\rho_i}\|_{L^{1}_t([T,\widetilde{T}))} + \|N_{\rho_i}\|_{L^{\infty}_t([\widetilde{T},\infty))}\right),  \label{lambda}\\
\|\gamma\|_{\st{2}{\infty}([T,\widetilde{T})\times \R^3)}\leq & C_2 \left(\|\Gamma(t-T)(\gamma(T), \dot{\gamma}(T))\|_{\st{2}{\infty}( [T,\widetilde{T})\times \R^3)} + \|N_c\|_{\st{1}{2}( [T,\widetilde{T})\times \R^3)}\right). \label{gamma}
\end{align}
Here that constant $C_1$ depends on the $L^1$ and $L^2$ integrals of $e^{-k_it}$ 
and the 
constant $C_2$ depends only on the constants in the Strichartz estimates.  

In (\ref{gamma}), instead of estimating initial data $(\gamma(T), \dot{\gamma}(T))$ in $\hl(\R^3)$ which may not be  small, we estimate its free evolution in   $\st{2}{\infty}( [T,\widetilde{T})\times \R^3)$. Consequently,  we obtain smallness because of (\ref{small-gamma}).

Using  the fact that $$N_{\rho_i}=\la \rho_i | N(\vp,h)\ra, \; N_{\rho}=\sum_{i} N_{\rho_i} \rho_{i},\; N_c=N-N_{\rho}$$ and  the exponential decay of $\rho_i$, we have
\begin{equation}\|N_{\rho_i}\|_{L^{1}_t([T,\widetilde{T}))},  \|N_c\|_{\st{1}{2}( [T,\widetilde{T})\times \R^3)}  \leq  \|N(\vp,h)\|_{\st{1}{2}([T,\widetilde{T})\times \R^3)}. \label{N-bound}\end{equation}
Recall that $$N(\vp, h) = 10 \vp^3h^2 + 10 \vp^2 h^3 +5 \vp h^4 +h^5.$$ Hence,  by H\"older inequalities, we have 
\begin{align*}
\|\vp^3 h^2\|_{\st{1}{2}([T,\widetilde{T})\times \R^3)} &\leq \|\vp\|_{L^6_x}^3 \|h\|^2_{\st{2}{\infty}([T,\widetilde{T})\times \R^3)},\\
\|\vp^2 h^3\|_{\st{1}{2}([T,\widetilde{T})\times \R^3)} &\leq \|\vp\|_{L^6_x}^2 \|h\|^2_{\st{2}{\infty}([T,\widetilde{T})\times \R^3)} \|h\|_{\st{\infty}{6}([T,\widetilde{T})\times \R^3)}, \\
\|\vp h^4\|_{\st{1}{2}([T,\widetilde{T})\times \R^3)} &\leq  \|\vp\|_{L^6_x} \|h\|^2_{\st{2}{\infty}([T,\widetilde{T})\times \R^3)} \|h\|^2_{\st{\infty}{6}([T,\widetilde{T})\times \R^3)}, \\ \|h^5\|_{\st{1}{2}([T,\widetilde{T})\times \R^3)}  &\leq    \|h\|^2_{\st{2}{\infty}([T,\widetilde{T})\times \R^3)} \|h\|^3_{\st{\infty}{6}([T,\widetilde{T})\times \R^3)},\end{align*}
Consequently
\begin{equation}\label{N-12bound}
\|N(\phi, h)\|_{\st{1}{2}([T,\widetilde{T})\times \R^3)}\leq C \|h\|^2_{\st{2}{\infty}([T,\widetilde{T})\times \R^3)},
\end{equation}
 and 
 \begin{align}\|N_{\rho_i}\|_{L^{\infty}_t([\widetilde{T},\infty))} & \leq C \|\rho_i\|_{L^{6}_x(\R^3)}\|N(\vp,h)\|_{\st{\infty}{\frac{6}{5}}([\widetilde{T},\infty)\times\R^3)}\notag \\ & \leq C \sum_{i=2}^5\|\vp\|_{L^6_x}^{5-i}\|h\|_{\st{\infty}{6}( [\widetilde{T},\infty)\times \R^3)}^i \leq C\epsilon^2. \label{N-inftybound}\end{align}
 Using (\ref{h-small}) and $$\|h\|_{\st{2}{\infty}( [T,\widetilde{T})\times \R^3)}\lesssim \|(\lambda_1, \cdots, \lambda_n, \gamma)\|_{X([T,\widetilde{T}))}$$ with constant depending on  $\|\rho_i\|_{L^\infty_x}$, we can combine estimates (\ref{lambda}),(\ref{gamma}) with (\ref{small-lambda}), (\ref{small-gamma}) and (\ref{N-bound})-(\ref{N-inftybound}) to get
 \begin{align*}
  \|(\lambda_1, \cdots, \lambda_n, \gamma)\|_{X([T,\widetilde{T}))} \leq K\left\{\epsilon  +   \|(\lambda_1, \cdots, \lambda_n, \gamma)\|_{X([T,\widetilde{T}))}^2 +  \epsilon^2 \right\},  \end{align*}
Here $K$ is some constant depending only on the constants in Strichartz inequalities for equation (\ref{eq:linearequationwithpotential}) and  $\|\vp\|_{L^6_x}$ and $\|\rho_i\|_{L^\infty_x}$.  Since this estimate is true for all $\widetilde{T} > T$,  we can choose $\epsilon\ll 1$, which can be achieved by taking $T$ sufficiently large,  such that 
  \[ (4K^2+1)\epsilon <1.\]
  By a continuity argument  we then obtain that 
 \[  \|(\lambda_1, \cdots, \lambda_n, \gamma)\|_{X([T,\infty))}  \leq 2K\epsilon.\]
 which implies that $\|h\|_{\st{2}{\infty}( [T,\infty)\times \R^3)}\lesssim \epsilon$. Using interpolation between the $\st{\infty}{6}$ and $\st{2}{\infty}$ norms, we can also obtain 
\begin{equation} \|h\|_{L^q_tL^r_x( [T,\infty)\times \R^3)}  \lesssim  \epsilon , \quad \text{ for any admissible pair } (q,r), q\geq 2.\label{small-h}\end{equation}
In particular, we infer that $h\in L^5_tL^{10}_x([0,\infty)\times \R^3)$. 

\smallskip

\noindent
{\it Step 2: construction of the center-stable manifold near a solution $U$.}  Given a radial finite energy solution $U$ to (\ref{eq:mainequation}) satisfying (\ref{scatter-vp}), 
we consider another radial finite energy solution $u$, with $\|\OR{U}(T)-\OR{u}(T)\|_{\HL(\R^3)}$ small for a fixed large time $T$ from Step 1 (we may need to take $T$ large to close the estimates below, which of course can be done). We write $u= U + \eta $, then  
$\eta$ satisfies the equation 
\[\eta_{tt}-\Delta \eta - V(x)\eta +(U+\eta)^5-U^5=0, \,\,(t,x)\in(T,\infty).\]
Plugging in $U=\vp +h$, we can  further write the equation as 
\begin{equation}\label{eta-eq}
 \eta_{tt} + \mathcal{L}_{\vp}\eta +\tilde{{N}}(\vp, h, \eta)=0, \,\,(t,x)\in(T,\infty),
\end{equation}
with $$\tilde{{N}}(\vp, h, \eta) = (\vp +h +\eta)^5- (\vp + h)^5 - 5\vp^4\eta$$
We note that $\tilde{N}$ still contains terms linear in $\eta$. However, a closer inspection shows that the coefficients of the linear terms in $\eta$ decay in both space and time, and can be made small if we choose $T$ sufficiently large.  First write $$\eta =\tl_1(t)\rho_1 +\cdots+ \tl_n(t)\rho_n +\tg, \qquad \tg\perp \rho_i$$ for $i=1,\cdots, n$. We use similar arguments as in step~1 to obtain a solution $\eta$ which stays small for all positive times with given $(\tl_1(T),\cdots,\tl_n(T))$ and $(\tg,\dot{\tg})(T)$.
  We can obtain equations for $\tl_i, \tg$ similar to (\ref{system}). Since we seek  a forward solution which grows at most polynomially, we obtain a similar  necessary and sufficient stability condition as (\ref{condition}) 
\begin{equation}\label{condition2} \dot{\tl}_i(T)= - k_i\tl_i(T)- \int_T^\infty e^{k_i(T-s)}\tilde{N}_{\rho_i}(s)ds.
\end{equation}
Using equations (\ref{eta-eq}) and (\ref{condition2}) we arrive at the system of equations for $\tl_i$ and $ \tg$,
\begin{equation}\label{system-2}\left\{\begin{aligned}
\tl_i(t)&=e^{-k_i(t-T)}\left[\tl_i(T)+\frac{1}{2k_i} \int_T^\infty e^{k_i (T-s)}\tilde{N}_{\rho_i}(s)ds\right]-\frac{1}{2k_i}\int_T^\infty e^{-k_i|t-s|}\tilde{N}_{\rho_i}(s)\,ds, \\
\tg(t)&=\cos (\omega (t-T))\tg(T)+\frac{1}{\omega}\sin (\omega (t-T))\dot{\tg}(T)+\frac{1}{\omega}\int_T^t\sin (\omega(t-s))\tilde{N}_c (s)\,ds. 
\end{aligned}\right.\end{equation}
Define 
\begin{equation} \|(\tl_1,\ldots, \tl_n, \tg)\|_{X}:= \sum_{i=1}^n\|\tl_i(t)\|_{L^\infty_t \cap L_t^2([T,  \infty))} + \|\tg\|_{L^\infty_t{\dot{H}^1}\cap \st{2}{\infty}( [T,\infty)\times \R^3)}.\label{define-X}\end{equation}
Estimating system (\ref{system-2}), we obtain that 
 \begin{align}
\|\tl_i(t)\|_{L^{\infty}\cap L^2([T, \infty))}\lesssim & |\tl_i(T)| + \|\tilde{N}_{\rho_i}\|_{L^1_t([T,\infty))}  \lesssim |\tl_i(T)| +  \|\tilde{N}\|_{\st{1}{2}( [T,\infty)\times \R^3)}, \label{tilde-lambda} \\
\|\tg\|_{L^\infty_t{\dot{H}^1}\cap\st{2}{\infty}( [T,\infty)\times\R^3)}\lesssim& \|(\tg(T), \dot{\tg}(T))\|_{\hl} + \|\tilde{N}\|_{\st{1}{2}( [T,\infty)\times \R^3)}. \label{tilde-gamma}
\end{align}
Recalling $|\tilde{N}| \lesssim  \sum_{j=1}^4 |\vp^{4-j} h^j \eta |+ \sum_{k\geq 2, i+j+k=5} |\vp^i h^j   \eta^k|$, we have 
\begin{align*}
\|\vp^3 h\eta\|_{\st{1}{2}( [T,\infty)\times \R^3)}& \leq  \|\vp\|_{L^6_x}^3 \|h\|_{\st{2}{\infty}([T,\infty)\times \R^3)}\|\eta\|_{\st{2}{\infty}([T,\infty)\times \R^3)},\\
\|\vp^2 h^2\eta\|_{\st{1}{2}( [T,\infty)\times \R^3)}& \leq  \|\vp\|_{L^6_x}^2 \|h\|_{\st{\infty}{6}([T,\infty)\times \R^3)}\|h\|_{\st{2}{\infty}([T,\infty)\times \R^3)}\|\eta\|_{\st{2}{\infty}([T,\infty)\times \R^3)},\\
\|\vp h^3\eta\|_{\st{1}{2}( [T,\infty)\times \R^3)}& \leq  \|\vp\|_{L^6_x}\|h\|^2_{\st{\infty}{6}([T,\infty)\times \R^3)}\|h\|_{\st{2}{\infty}([T,\infty)\times \R^3)}\|\eta\|_{\st{2}{\infty}([T,\infty)\times \R^3)},\\
\| h^4\eta\|_{\st{1}{2}( [T,\infty)\times \R^3)}& \leq   \|h\|^3_{\st{\infty}{6}([T,\infty)\times \R^3)}\|h\|_{\st{2}{\infty}([T,\infty)\times \R^3)}\|\eta\|_{\st{2}{\infty}([T,\infty)\times \R^3)}.
\end{align*}
Using (\ref{small-h}), we get that 
\begin{equation}\label{N1-estimate}\left\|\sum_{j=1}^4 \vp^{4-j} h^j \eta\right\|_{\st{1}{2}( [T,\infty)\times \R^3)}\lesssim \epsilon  \|\eta\|_{ \st{2}{\infty}([T,\infty)\times \R^3)}.\end{equation}
The higher order terms are easier to estimate. We can always place $h$ in $L^\infty_tL^6_x$, whence 
\begin{align}\label{N2-estimate}
\left\| \sum_{k\geq 2, i+j+k=5}\vp^i h^j   \eta^k\right\|_{\st{1}{2}( [T,\infty)\times \R^3)}\lesssim \sum_{k=2}^5\|\eta\|^k_{L^\infty_t{\dot{H}^1}\cap \st{2}{\infty}([T,\infty)\times \R^3)}.
\end{align}

Since $\|\eta\|_{L^{\infty}_t\dot{H}^1\cap\st{2}{\infty}([T,\infty)\times \R^3)}\lesssim \|(\tl_1, \cdots, \tl_n, \tg)\|_{X([T,\infty))}$, we can combine the preceding estimates and get that
\begin{align*}\|(\tl_1,\cdots, \tl_n, \tg)\|_{X([T,\infty))}\leq & \,  K \left(\sum_{i=1}^n |\tl_i(T)| + \|(\tg(T), \dot{\tg}(T))\|_{\hl} \right)
\\ &+ K\epsilon \|(\tl_1,\cdots, \tl_n, \tg)\|_{X([T,\infty))} + K \sum_{k=2}^5\|(\tl_1,\cdots, \tl_n, \tg)\|_{X([T,\infty))}^k,\end{align*}
where $K>1$ is a constant only depending on the constants in the Strichartz estimates for equation (\ref{eq:linearequationwithpotential}) and $\|\phi\|_{L^6(\R^3)}$ and $\|\rho_i\|_{L^\infty_x}$. This inequality implies
that  if we take $\epsilon=\epsilon_0$ sufficiently small (which can be achieved by choosing $T$ suitably large), and $\delta<\epsilon_0$ with 
\begin{equation}\label{small-data}\sum_{i=1}^n |\tl_i(T)| + \|(\tg(T), \dot{\tg}(T))\|_{\hl}\leq \delta,\end{equation}
such that $K\epsilon_0 <\frac{1}{4}$ and $K^2\delta<\frac{1}{32}$, then 
the map defined by the right-hand side of system (\ref{system-2}) takes a ball  $B_{2K\delta}(0)$ into itself. Moreover, we can check by the same argument   that this map is in fact
 a contraction. Thus for any given small $(\tl_1(T), \cdots \tl_n(T), \tg(T))$ satisfying (\ref{small-data}), we obtain a unique fixed point of (\ref{system-2}). Then 
\begin{equation*}
u(t,x):=U(t,x) +\sum_{i=1}^k \tl_i(t)\rho_i+\tg(t,x)
\end{equation*}
 solves the equation (\ref{eq:mainequation}) on $\R^3\times [T,\infty)$, satisfying 
\begin{equation}\|\OR{u}-\OR{U}\|_{L_t^\infty( [T,\infty);\hl)}
\leq C \delta \label{epsilon-difference}\end{equation}
 with Lipschitz dependence on the data $\tl_i(T)$ and $(\tg(T),\dot{\tg}(T))$. Since the nonlinearity $\tilde{N}$ only involves integer powers of $\eta$, we see that the integral terms in (\ref{system-2}) have smooth dependence on $\tl_i, \tg$. Hence we conclude that $\tl_i(t),\tg(t,x)$ and the solution $u(t,x)$ actually have smooth dependence on the data.

 By the estimates on $\tilde{\lambda}_i$ and $\tilde{\gamma}$, we conclude in addition that
\begin{equation*}
\eta=\sum_{i=1}^n \tl_i(t)\rho_i+\tg(t,x)\in L^2_tL^{\infty}_x([T,\infty)\times \R^3),
\end{equation*}
 hence $\OR{u}(t)$ scatters to the same steady state as $\OR{U}(t)$ which is $(\phi,0)$.

We can now define
\begin{equation}
\Psi: \overrightarrow{U}(T)+\left(B_{\epsilon_0}((0,0))\cap X_s\right)\longrightarrow \dot{H}^1\times L^2,
\end{equation}
as follows: for any $(\tg_0,\tg_1)\in P^{\perp}\left(\RHL(\R^3)\right)$   and $\tl_i\in \R$ such that
\begin{equation*}
\mathcal{\xi}:=\sum_{i=1}^n\tl_i(\rho_i,-k_i\rho_i)+(\tg_0,\tg_1)+ \overrightarrow{U}(T)\in  \overrightarrow{U}(T)+\left(B_{\epsilon_0}((0,0))\cap X_s\right),
\end{equation*}
set 
\begin{equation*}
\tl_i(T)=\tl_i, \,\,\,{\rm for\,\,}i=1,\dots,n \,\,\,{\rm and}\,\,(\tg(T),\dot{\tg}(T))=(\tg_0,\tg_1).
\end{equation*}
Then with $\dot{\tl}_i(T)$ given by  (\ref{condition2}), we define
\begin{equation*}
\Psi(\mathcal{\xi}):=\left(\sum_{i=1}^n\tl_i(T)\rho_i+\tg_0,\sum_{i=1}^n\dot{\tl}_i(T)\rho_i+\tg_1\right) +\OR{U}(T).
\end{equation*}
If $\epsilon_0$ is chosen sufficiently small, then $\dot{\tl}_i$ is uniquely determined by contraction mapping in the above.  
We define  $\widetilde{\mathcal{M}}$ as the graph of $\Psi$ and let $\mathcal{M}$ be $\OR{S}(-T)(\widetilde{\mathcal{M}})$.
We can then check that $\Psi,\,\mathcal{M},\,\widetilde{\mathcal{M}}$ verify the requirements of the theorem. Since $\OR{S}(T)$ is a diffeomorphism, $\mathcal{M}$ is a $C^1$ manifold. 
 We remark that due to the presence of radiations $\OR{U}^L$, the graph $\widetilde{\mathcal{M}}$ is in general not tangent to the center-stable subspace $X_s$.


\smallskip

\noindent 
{\it Step 3: unconditional uniqueness.} Now suppose that we are given a solution $u$ to equation (\ref{eq:mainequation}), which satisfies
 \[\|\OR{u}-\OR{U}\|_{L^\infty( [0,\infty);\hl)}\leq\epsilon_1\ll\epsilon_0.\]
 We need to show that $\OR{u}(T)\in \widetilde{\mathcal{M}}$. We denote \[\eta(t,x) = u(t,x)-U(t,x) = \sum_{i=1}^n \tl_i(t)\rho_i +\tg(t,x),\] 
 then  $\OR{\eta}\in L^\infty_t([0,\infty);\hl)$. 
By the fact that $u, U$ are solutions to equation (\ref{eq:mainequation}) and Strichartz estimates, we see that 
 $\eta \in L^q_tL^r_x (I\times \R^3)$ for any finite interval $I\subseteq[0,\infty)$ and admissible pair $(q,r)$, we get for any $\tilde{T}>T$, 
 \begin{align} 
& \|\tl_i(t)\|_{L_t^\infty([T,\infty))} +  \|\vec{\tg}(t,x)\|_{ L_t^\infty( [T,\infty);\hl) }\lesssim \epsilon_1,\label{delta-small}\\
& \tl_i(t)\in    L^2([T,\tilde{T})), \notag \\
&  \tg(t,x)\in L^2_tL^{\infty}_x( [T,\tilde{T})\times \R^3). \notag
 \end{align}
 Notice the $L^\infty$ bound on $\tl_i$ implies that the stability condition   (\ref{condition2}) must hold true, so we are again reduced to considering system (\ref{system-2}). 
 Now we wish to show that $\tl_i(t)\in L^2([T,\infty)) $ and $\tg(t,x)\in L^2_tL^{\infty}_x([T,\infty)\times \R^3)$. To do this, we follow similar arguments as in step 1. Define the norm
\begin{align*} \|(\tl_1,\cdots, \tl_n, \tg)\|_{X([T,\tilde{T}))}:&= \sum_{i=1}^n\|\tl_i(t)\|_{L_t^2([T, \tilde{T}))} + \|\tg\|_{\st{2}{\infty}([T,\tilde{T})\times \R^3)}
\end{align*}
By estimating (\ref{system-2}) similar to (\ref{lambda}), (\ref{gamma}), we get 
\begin{align*}
\sum_{i=1}^n\|\tl_i(t)\|_{L^2([T, \tilde{T}))} + \|\tg\|_{\st{2}{\infty}( [T,\tilde{T})\times \R^3)}\lesssim &\sum_{i=1}^n |\tl_i(T)| + \|(\tg(T), \dot{\tg}(T))\|_{\hl}\\ &  + \|\tilde{N}\|_{\st{1}{2}( [T,\tilde{T})\times \R^3)} + \|\tilde{N}\|_{L^{\infty}_tL^{\frac{6}{5}}_x([\tilde{T},\infty)\times \R^3)}  \end{align*}
Recall we have $h=U-\phi, \|h\|_{\st{2}{\infty}( [T,\infty)\times \R^3)} \leq \epsilon=\epsilon_0$.  Using the same estimate as in (\ref{N1-estimate}) and (\ref{N2-estimate}) on the time interval $[T,\tilde{T})$, we obtain that
\[\|\tilde{N}\|_{\st{1}{2}( [T,\tilde{T})\times \R^3)} \lesssim \epsilon_0 \|\eta\|_{\st{2}{\infty}( [T,\tilde{T})\times \R^3)}+\sum_{k=2}^5\|\eta\|^k_{L^\infty_t{\dot{H}^1}\cap \st{2}{\infty}([T,\tilde{T})\times \R^3)},\]
and 
\[\|\tilde{N}\|_{L^{\infty}_tL^{\frac{6}{5}}_x( [\tilde{T},\infty)\times \R^3)}   \lesssim \sum_{i+j+k=5, k\geq 1}\|\vp\|_{L^6_x}^i\|h\|^j_{L^\infty_t L^6_x( [\tilde{T},\infty)\times \R^3)}\|\eta\|^k_{L^\infty_tL^6_x( [\tilde{T},\infty)\times \R^3)}\lesssim \epsilon_1.\]
Hence  
\[\|(\tl_1,\cdots, \tl_n, \tg)\|_{X([T,\tilde{T}))} \lesssim \epsilon_1 + \epsilon_0 \|(\tl_1,\cdots, \tl_n, \tg)\|_{X([T,\tilde{T}))}+K \sum_{k=2}^5\|(\tl_1,\cdots, \tl_n, \tg)\|_{X[T,\tilde{T})}^k.\]
From this, by a continuity argument, we can conclude that
\[\|(\tl_1,\cdots, \tl_n, \tg)\|_{X([T,\infty))} \leq \liminf_{\tilde{T}\rightarrow \infty}\|(\tl_1,\cdots, \tl_n, \tg)\|_{X([T,\tilde{T}))}\leq C \epsilon_1 <\epsilon_0,\]
and the contraction mapping theorem then implies $\OR{u}(T)\in\widetilde{\mathcal{M}}$.  

\smallskip 

\noindent
{\it Step 4: summary.} Let us sum up our construction as follows: consider any point $(U_0,U_1)\in \mathcal{M}_{\vp}$, which generates a solution $U(t,x)$ to equation (\ref{eq:mainequation}) satisfying (\ref{scatter-vp}). For sufficiently large time $T$, we can construct a smooth graph $\widetilde{\mathcal{M}}$ of co-dimension $n$ in $$B_{\epsilon_0}(\OR{U}(T))\in \RHL$$ such that solutions starting from $\widetilde{\mathcal{M}}$ remain close to $\OR{U}(t)$ for all $t\ge T$ and scatter to $(\phi,0)$. The graph can also be parametrized smoothly by $$\tl_1(T), \cdots \tl_n(T)\in \R, \vec{\tg}(T)\in P^{\perp}(\RHL)$$ in the following sense.  For the parameters satisfying 
\[\sum_{i=1}^n |\tl_i(T)| + \|(\tg(T), \dot{\tg}(T))\|_{\hl}\leq \epsilon_0\]
there exists a unique solution ${u}$ to equation (\ref{eq:mainequation}) on $t\geq T$ satisfying 
\[u(T) = U(T) +\sum_{i=1}^n \tl_i(T)\rho_i +\tg(T,x), \quad P^{\perp}\dot{u}(T) = \dot{\tg}(T)+P^{\perp}\partial_tU(T),\]
with the property that $u(t)$ scatters to $\phi$,     
and \[\|\OR{u}(t) -\OR{U}(t)\|_{L^\infty_t ([T,\infty),\hl)}\leq C \epsilon_0.\]
Moreover, any solution that satisfies $\|\OR{u}(t)-\OR{U}(t)\|_{\HL}<\epsilon_1$ with some $\epsilon_1<\epsilon_0$, for all times $t\geq T$ necessarily starts on $\widetilde{\mathcal{M}}$. Using the solution flow $\OR{S}(t)$, we pull back our construction to time $0$, $\mathcal{M}=\OR{S}(-T)\widetilde{\mathcal{M}}$, and the theorem is proved.
\end{proof}
Now we give a proof of Claim~\ref{Claim}.  Claim~\ref{Claim} will be proved as a consequence of the following lemma. 
\begin{lemma}
Let $\overrightarrow{U}^L$ be a radial finite energy free radiation and $(\phi, 0)$ be a steady state to equation (\ref{eq:mainequation}). Recall that 
\[\omega =\sqrt{P^{\perp}(-\Delta - V +5\phi^4)}.\]
Let $\gamma$ be the solution to 
\begin{equation}
\left\{\begin{aligned} \partial_{tt}\gamma +\omega^2 \gamma &=0, \hspace{1cm} \text{ in } [T,\infty)\times \R^3,
\\
\overrightarrow{\gamma}(T) & = P^{\perp} (\overrightarrow{U}^L(T)). 
\end{aligned}\right. 
\end{equation}
For any $\epsilon>0$, if we take $T=T(\epsilon, \overrightarrow{U}^L)>0$ sufficiently large, then 
\begin{equation}
 \|\gamma\|_{\st{2}{\infty}([T,\infty)\times \R^3)} <\epsilon. 
\end{equation}
\end{lemma}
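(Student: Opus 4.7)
\medskip
\noindent\textit{Proof plan.} The strategy is to decompose $\gamma$ into a main part that mirrors the free radiation $U^L$ up to the spectral projection, plus a correction driven by the perturbation $(V-5\phi^4)$, and then to exploit that $\|U^L\|_{\st{2}{\infty}(\R\times\R^3)}<\infty$ by the radial endpoint Strichartz estimate from Appendix~B, which immediately yields $\|U^L\|_{\st{2}{\infty}([T,\infty)\times\R^3)}\to 0$ as $T\to\infty$.

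First, I would set $\gamma(t)=P^{\perp}U^L(t)+\eta(t)$ for $t\ge T$. Since $P^{\perp}$ commutes with $-\Delta-V+5\phi^4$ and $U^L$ solves the free wave equation, a direct computation gives $\OR{\eta}(T)=0$ together with
\[
\eta_{tt}+\omega^2\eta=P^{\perp}(V-5\phi^4)U^L.
\]
For the main part, I would write $P^{\perp}U^L=U^L-\sum_{i}\langle\rho_i,U^L\rangle\rho_i$. The first summand's $\st{2}{\infty}([T,\infty))$ norm tends to zero by the tail estimate on $U^L$; each spectral coefficient is then controlled via Agmon's exponential decay $\|\rho_i\|_{L^1_x}<\infty$ and the inequality $|\langle\rho_i,U^L(t)\rangle|\le\|\rho_i\|_{L^1_x}\|U^L(t)\|_{L^\infty_x}$, giving a matching $L^2_t$ tail bound after multiplication by $\rho_i\in L^\infty_x$.

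The heart of the argument will be to show $\|\eta\|_{\st{2}{\infty}([T,\infty))}\to 0$; by the radial endpoint of Lemma~\ref{lm:strichartzwithpotential} this is controlled by $\|(V-5\phi^4)U^L\|_{\st{1}{2}([T,\infty))}$, which is not obviously small for a generic finite energy free wave. To handle this, I would approximate $\OR{U}^L(0)$ in $\hl$ by smooth radial data $(f_n,g_n)\in C_c^\infty(B_{R_n})$ with error $<\delta$; Lemma~\ref{lm:strichartzwithpotential} applied to the homogeneous $\omega$-evolution of the remainder then contributes at most $C\delta$ to $\|\gamma\|_{\st{2}{\infty}([T,\infty))}$, uniformly in $T$. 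For the compactly supported approximant $U^L_n$, strong Huygens' principle in three space dimensions confines $U^L_n(t,\cdot)$ to the shell $\{||x|-t|\le R_n\}$ with the classical pointwise bound $|U^L_n(t,x)|\lesssim C_n/t$ there, and in particular $\|U^L_n(t)\|_{L^2_x}$ remains bounded in $t$; combining with the decay $|V(x)-5\phi(x)^4|\lesssim(1+|x|)^{-\beta_0}$ for $\beta_0:=\min(\beta,4)>2$ gives $\|(V-5\phi^4)U^L_n(t)\|_{L^2_x}\lesssim_n t^{-\beta_0}$ for $t\gg R_n$, whence $\|(V-5\phi^4)U^L_n\|_{\st{1}{2}([T,\infty))}\lesssim_n T^{1-\beta_0}\to 0$. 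Picking $\delta$ small first and then $T$ large closes the argument. The main obstacle is precisely this source estimate: strong Huygens and the support localization of $U^L_n$ are both essential to convert the a priori uniform Strichartz bound on $\eta$ into a genuinely vanishing tail, and this is what forces the density reduction to compactly supported data.
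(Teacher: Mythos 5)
Your proposal is correct and takes essentially the same approach as the paper: approximate $\OR{U}^L$ by a compactly supported radial free radiation, exploit the strong Huygens principle so that the source term $(V-5\phi^4)\widetilde{U}^L$ decays in $\st{1}{2}$ as the light shell moves outward, and close via the radial endpoint Strichartz estimate of Lemma~\ref{lm:strichartzwithpotential}. The paper merely organizes the decomposition slightly differently, setting $v := \gamma - P^{\perp}\widetilde{U}^L$ directly so that the data-approximation error and the inhomogeneous correction are bundled into a single object, which is a cosmetic reorganization of the same argument you give.
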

\begin{proof} For a given $\epsilon>0$, fix $0<\delta \ll \epsilon$ to be determined below. We can take a radial smooth compactly supported (in space) free radiation $\overrightarrow{\widetilde{U}}^L$ such that 
\begin{equation}
\|\overrightarrow{U}^L(0)-\overrightarrow{\widetilde{U}}^L(0)\|_{\hl(\R^3)} \leq \delta.  \label{UU-small}
\end{equation}
Let us assume that ${\rm supp}\overrightarrow{\widetilde{U}}^L(0) \Subset B_R(0)$ for some $R>0.$ Hence by strong Huygens' principle, for large time $T$ we have ${\rm supp}\overrightarrow{\widetilde{U}}^L(T) \Subset B_{T+R}\backslash B_{T-R}$. Since $\overrightarrow{\widetilde{U}}^L$ is a free radiation, we see that 
\begin{equation}
\partial_{tt} {\widetilde{U}}^L -\Delta  {\widetilde{U}}^L - V  {\widetilde{U}}^L + 5\phi^4  {\widetilde{U}}^L = - V  {\widetilde{U}}^L +5\phi^4  {\widetilde{U}}^L, \quad\text{ in } (0,\infty)\times \R^3. \label{eq:tildeU}
 \end{equation}
 By the decay property of $V, \, 5\phi^4$ and the support property of $ {\widetilde{U}}^L$, simple calculations show that 
 \[\lim_{T\rightarrow \infty } \|- V {\widetilde{U}}^L +5\phi^4 {\widetilde{U}}^L\|_{\st{1}{2}([T,\infty)\times \R^3)} =0.\]
 Choose $T$ sufficiently large, such that 
 \begin{equation}
  \|- V  {\widetilde{U}}^L +5\phi^4  {\widetilde{U}}^L\|_{\st{1}{2}([T,\infty)\times \R^3)} \leq \delta.\label{RHS-small}
 \end{equation}
 Note that $\overrightarrow{v}:= \overrightarrow{\gamma} - P^{\perp}\overrightarrow{\widetilde{U}}^L$ solves 
 \[\partial_{tt} v +\omega^2 v = - P^{\perp}\left(  - V  {\widetilde{U}}^L +5\phi^4  {\widetilde{U}}^L \right), \quad (t,x)\in [T, \infty)\times \R^3, \]
 with initial data $\overrightarrow{v}(T)= P^{\perp}\left(
 \overrightarrow{U}^L(T) - P^{\perp}\overrightarrow{\widetilde{U}}^L(T)\right)$. By the bounds (\ref{UU-small}) and (\ref{RHS-small}), energy conservation for free radiation, and Strichartz estimates from Lemma \ref{lm:strichartzwithpotential}, we can conclude that 
 \begin{equation}\|v\|_{\st{2}{\infty}([T,\infty)\times \R^3)}\leq C\delta. \label{small-v}\end{equation}
 Since $\overrightarrow{\widetilde{U}}^L$ is a finite energy free radiation, if we choose $T$ sufficiently large, we have 
 \begin{equation}
 \| {\widetilde{U}}^L\|_{\st{2}{\infty}([T,\infty)\times \R^3)}\leq C\delta. \label{small-UT}
 \end{equation}
 Combining bounds (\ref{small-v}) and (\ref{small-UT}), and fixing $\delta$ small, the lemma is proved. 
\end{proof}
Now the proof of Claim~\ref{Claim} is easy. Note that due to the fact that 
\[\lim_{T\rightarrow \infty} \|\overrightarrow{U}(T) - (\phi,0) -\overrightarrow{U}^L(T)\|_{\hl(\R^3)} =0,\]
we see that the initial data for $\gamma$ satisfies 
\[\lim_{T\rightarrow \infty} \|\overrightarrow{\gamma}(T)-P^{\perp}\overrightarrow{U}^L(T)\|_{\hl(\R^3)} =0.\]
Hence the claim follows from the above lemma and Strichartz estimates. 

\section{Profile decomposition and channel of energy inequality}
\label{sec:4}

In this section we recall some well-known properties of profile decompositions first introduced in the context of wave equations by Bahouri, Gerard\cite{BaGe}, and channel of energy inequalities discovered by Duyckaerts, Kenig and Merle \cite{DKM,DKM1}. For both, we require the versions adapted to the wave equation with a potential. We refer the reader to \cite{JiaLiuXu} for proofs. We first recall the following perturbation result.
\begin{lemma}\label{lm:perturbationresult}
Let $0\in I\subset \R$ be an interval of time. Suppose $\tilde{u}(t,x)\in C_t(I,\dot{H}^1(\R^3))$ with $\|\tilde{u}\|_{L^5_tL^{10}_x(I\times \R^3)}\leq M<\infty$, $\|a\|_{L^{5/4}_tL^{5/2}_x(I\times \R^3)}\leq \beta<\infty$ and $e(t,x),\,f(t,x)\in L^1_tL^2_x(I\times \R^3)$, satisfy
\begin{equation}
\partial_{tt}\tilde{u}-\Delta \tilde{u}+a(t,x)\tilde{u}+\tilde{u}^5=e,
\end{equation}
with initial data $\overrightarrow{\tilde{u}}(0)=(\tilde{u}_0,\tilde{u}_1)\in \dot{H}^1\times L^2$. Suppose for some sufficiently small positive $\epsilon<\epsilon_0=\epsilon_0(M,\beta)$, 
\begin{equation}
\||e|+|f|\|_{L^1_tL^2_x(I\times \R^3)}+\|(u_0,u_1)-(\tilde{u}_0,\tilde{u}_1)\|_{\dot{H}^1\times L^2}<\epsilon.
\end{equation}
Then there is a unique solution $u\in C(I,\dot{H}^1)$ with $\|u\|_{L^5_tL^{10}_x(I\times \R^3)}<\infty$, satisfying the equation
\begin{equation}
\partial_{tt}u-\Delta u+a(t,x)u+u^5=f,
\end{equation}
with initial data $\OR{u}(0)=\overrightarrow{u}(0)=(u_0,u_1)$. Moreover, we have the following estimate
\begin{equation}
\sup_{t\in I}\|\overrightarrow{u}(t)-\overrightarrow{\tilde{u}}(t)\|_{\dot{H}^1\times L^2}+\|u-\tilde{u}\|_{L^5_tL^{10}_x(I\times \R^3)}<C(M,\beta)\epsilon.
\end{equation}
\end{lemma}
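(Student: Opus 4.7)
\medskip

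\noindent
\textbf{Proof proposal for Lemma 4.1.} The plan is to carry out a standard Strichartz--based contraction/bootstrap argument for the difference $w=u-\tilde u$, exploiting the subcriticality of every nonlinear and potential term in the Strichartz norm $L^5_tL^{10}_x$. The crucial observation is that the hypotheses on $a$ and on $\tilde u$ are stated in exponents that match the inhomogeneous Strichartz pair $L^1_tL^2_x$ via H\"older: indeed, since $\tfrac{1}{5/4}+\tfrac{1}{5}=1$ and $\tfrac{1}{5/2}+\tfrac{1}{10}=\tfrac12$, one has $\|aw\|_{L^1_tL^2_x}\le \|a\|_{L^{5/4}_tL^{5/2}_x}\|w\|_{L^5_tL^{10}_x}$, and the same bookkeeping gives $\|\tilde u^j w^{5-j}\|_{L^1_tL^2_x}\lesssim \|\tilde u\|_{L^5_tL^{10}_x}^{\,j}\|w\|_{L^5_tL^{10}_x}^{\,5-j}$ for $0\le j\le 4$.

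First I would partition $I$ into finitely many subintervals $I_1,\dots,I_N$ with $N=N(M,\beta)$ such that both $\|\tilde u\|_{L^5_tL^{10}_x(I_j\times\R^3)}\le\delta$ and $\|a\|_{L^{5/4}_tL^{5/2}_x(I_j\times\R^3)}\le\delta$ hold on each $I_j$, where $\delta=\delta(M,\beta)$ is a small absolute constant to be fixed below (this relies only on the finiteness of the two global norms, via absolute continuity of the Lebesgue integral). On $I_1$, write $w=u-\tilde u$; it formally satisfies
\begin{equation*}
\partial_{tt}w-\Delta w+a\,w+\bigl((\tilde u+w)^5-\tilde u^5\bigr)=f-e,
\qquad \vec w(0)=(u_0,u_1)-(\tilde u_0,\tilde u_1).
\end{equation*}
I would set this up as a fixed point problem for $w$ in the complete metric space
$X_j=\{w\in C_t(I_j;\dot H^1):\|w\|_{L^5_tL^{10}_x(I_j\times\R^3)}\le\eta\}$
with $\eta=\eta(M,\beta,\epsilon)$ to be chosen, defining $\Phi(w)$ as the Duhamel solution of the linear wave equation with source $-aw-((\tilde u+w)^5-\tilde u^5)+f-e$. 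Theorem 3.1 gives
\begin{equation*}
\|\Phi(w)\|_{C_t(\dot H^1)\cap L^5_tL^{10}_x(I_1\times\R^3)}\le C\bigl(\epsilon+\delta\eta+\sum_{j=0}^{4}\delta^{\,j}\eta^{\,5-j}\bigr),
\end{equation*}
and, choosing first $\delta$ small (so that $C\delta<\tfrac14$ and $C\sum \delta^j\eta^{5-j}<\tfrac14\eta$ for $\eta\le 1$), then $\eta$ of size $\sim\epsilon$, one gets that $\Phi$ maps $X_1$ into itself; the same arithmetic shows $\Phi$ is a contraction on $X_1$ (the difference $\Phi(w)-\Phi(w')$ produces the same Strichartz multipliers times $\|w-w'\|_{L^5_tL^{10}_x}$). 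This yields the unique solution $w$ on $I_1$ with $\|w\|_{L^5_tL^{10}_x(I_1\times\R^3)}\le C\epsilon$ and, by the energy part of Strichartz, $\sup_{t\in I_1}\|\vec w(t)\|_{\dot H^1\times L^2}\le C\epsilon$.

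Next I would iterate across $I_2,\dots,I_N$. At the endpoint of $I_j$ the new ``initial data error'' for $w$ on $I_{j+1}$ is the energy norm of $\vec w$, which by the previous step is bounded by $C_j\epsilon$, with the constants $C_j$ growing geometrically in $j$. Since $N$ depends only on $(M,\beta)$, one absorbs the accumulated factor $C_1\cdots C_N$ into the final constant $C(M,\beta)$, and a further reduction of the initial threshold $\epsilon_0(M,\beta)$ ensures that the smallness needed for the fixed point argument is preserved on every subinterval. Concatenating the local solutions produces the desired solution $u=\tilde u+w$ on all of $I$ with $\|u-\tilde u\|_{L^5_tL^{10}_x(I\times\R^3)}+\sup_{t\in I}\|\vec u(t)-\vec{\tilde u}(t)\|_{\dot H^1\times L^2}\le C(M,\beta)\epsilon$, and uniqueness in the Strichartz class follows from the same contraction.

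The only genuine difficulty is the subinterval bookkeeping: one must verify that the $N$ chosen purely from $(M,\beta)$ is compatible with the propagated data errors, which forces the final smallness threshold $\epsilon_0(M,\beta)$ to depend multiplicatively on the subinterval count. Everything else is routine H\"older plus Strichartz, and in particular the potential $a$ is treated on exactly the same footing as a single power of $\tilde u$, which is why the hypothesis $a\in L^{5/4}_tL^{5/2}_x$ is dictated by scaling.
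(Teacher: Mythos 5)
The paper does not prove this lemma here but refers to the companion paper \cite{JiaLiuXu} for the argument; the proof you give is the standard perturbation/bootstrap argument and is correct in its essentials, so you are on the same track. One minor imprecision: the bound $C\sum_{j=0}^{4}\delta^{j}\eta^{5-j}<\tfrac14\eta$ cannot be secured by choosing $\delta$ alone, because the $j=0$ term $C\eta^5$ carries no factor of $\delta$; the constraint also requires $\eta$ itself to be small, which your setup does deliver since $\eta\sim\epsilon<\epsilon_0(M,\beta)$, but the phrase ``for $\eta\le 1$'' should be ``for $\eta$ sufficiently small,'' and the thresholds $\delta$ and $\epsilon_0$ should be fixed jointly. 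Apart from that, the H\"older bookkeeping $\|aw\|_{L^1_tL^2_x}\le\|a\|_{L^{5/4}_tL^{5/2}_x}\|w\|_{L^5_tL^{10}_x}$ and $\|\tilde u^{j}w^{5-j}\|_{L^1_tL^2_x}\lesssim\|\tilde u\|^{j}_{L^5_tL^{10}_x}\|w\|^{5-j}_{L^5_tL^{10}_x}$, the decomposition of $I$ into $N=N(M,\beta)$ subintervals via absolute continuity of the $L^5_tL^{10}_x$ and $L^{5/4}_tL^{5/2}_x$ norms, the contraction on each $X_j$, and the geometric propagation of the energy error absorbed into $C(M,\beta)$ by shrinking $\epsilon_0(M,\beta)$ are all correct.
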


Lemma \ref{lm:perturbationresult} has the following implication concerning  global existence and scattering for defocusing energy critical wave equation with  potential,  decaying both in space and time.

\begin{lemma}\label{lm:globalregularityofperturbedwave}
Let $I$ be an interval of time and $a\in L^{5/4}_tL^{5/2}_x\cap L^1_tL^3_x(I\times \R^3)$, and $f\in L^1_tL^2_x(I\times \R^3)$, with bounds $\|a\|_{L^{5/4}_tL^{5/2}_x}+\|a\|_{L^1_tL^3_x}\leq M$ and $\|f\|_{L^1_tL^2_x}\leq \beta$. Then there exists a unique solution $u\in C(I,\dot{H}^1)\cap L^5_tL^{10}_x(I\times \R^3)$ to the equation
\begin{equation}\label{eq:perturbedwave}
\partial_{tt}u-\Delta u+a(t,x)u+u^5=f,
\end{equation} 
with initial data $(u_0,u_1)\in \dot{H}^1\times L^2$ ($\|(u_0,u_1)\|_{\dot{H}^1\times L^2}\leq E$). Moreover, we have
\begin{equation}\label{eq:bound}
\|u\|_{L^5_tL^{10}_x(I\times \R^3)}\leq C(E,M,\beta).
\end{equation}
Thus if $I=\R$, then there exist solutions $u^L_{+},\,u^L_{-}$ to free wave equation, such that
\begin{eqnarray}
&&\lim_{t\to +\infty}\|u(t)-u^L_{+}(t)\|_{\dot{H}^1\times L^2}=0,\\
&&\lim_{t\to -\infty}\|u(t)-u^L_{-}(t)\|_{\dot{H}^1\times L^2}=0.
\end{eqnarray}
\end{lemma}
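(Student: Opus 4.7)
The plan is to bootstrap from the global scattering result for the unperturbed defocusing energy-critical wave equation (Grillakis, Struwe, Bahouri--Shatah) via a finite iteration of Lemma~\ref{lm:perturbationresult}, once an a priori bound on $\|\OR{u}(t)\|_{\hl}$ is in hand.

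First I would establish the a priori energy estimate. Set $E(t) = \tfrac12\|\nabla u(t)\|_{L^2}^2 + \tfrac12\|u_t(t)\|_{L^2}^2 + \tfrac16\|u(t)\|_{L^6}^6$. Multiplying the equation by $u_t$ and integrating in space yields
\begin{equation*}
\frac{d}{dt}E(t) = -\int a\,u\,u_t\,dx + \int f\,u_t\,dx.
\end{equation*}
By H\"older and the Sobolev embedding $\dot{H}^1\hookrightarrow L^6$, the first term is bounded by $C\|a(t)\|_{L^3_x}E(t)$ and the second by $\|f(t)\|_{L^2_x}\sqrt{2E(t)}$. Writing $y(t)=\sqrt{E(t)}$, one obtains $y'(t)\leq C\|a(t)\|_{L^3_x}\,y(t)+\tfrac{1}{\sqrt 2}\|f(t)\|_{L^2_x}$. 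Gronwall applied on $I$, using $\|a\|_{L^1_tL^3_x}\leq M$ and $\|f\|_{L^1_tL^2_x}\leq\beta$, gives a uniform bound $E(t)\leq E_\ast^2$ with $E_\ast=E_\ast(E,M,\beta)$. In particular, $\sup_{t\in I}\|\OR{u}(t)\|_{\hl}\leq C(E,M,\beta)$.

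Next I would upgrade this to the global space-time bound via iteration of Lemma~\ref{lm:perturbationresult}. For any $t_0\in I$ and any subinterval $J\subset I$ containing $t_0$, let $\tilde u_J$ denote the solution of the pure defocusing equation $\partial_{tt}\tilde u-\Delta\tilde u+\tilde u^5=0$ with data $(u(t_0),u_t(t_0))$ at $t_0$; by the classical global scattering result for the unperturbed equation together with the a priori energy bound, $\|\tilde u_J\|_{L^5_tL^{10}_x(\R\times\R^3)}\leq F(E_\ast)$ for an explicit increasing function $F$. Partition $I$ into $N=N(M,\beta,\delta)$ consecutive subintervals $I_1,\ldots,I_N$ on which $\|a\|_{L^{5/4}_tL^{5/2}_x(I_j)}+\|f\|_{L^1_tL^2_x(I_j)}\leq\delta$. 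Since $\tilde u_{I_j}$ solves $\partial_{tt}\tilde u-\Delta\tilde u+a\tilde u+\tilde u^5 = a\tilde u_{I_j}$, the error $e=a\tilde u_{I_j}$ in Lemma~\ref{lm:perturbationresult} is controlled in $L^1_tL^2_x(I_j)$ by $\|a\|_{L^{5/4}_tL^{5/2}_x(I_j)}\|\tilde u_{I_j}\|_{L^5_tL^{10}_x(I_j)}\leq \delta F(E_\ast)$. Fixing $\delta=\delta(E_\ast,\beta)$ small enough that $\delta(1+F(E_\ast))<\epsilon_0(F(E_\ast),\beta)$, the perturbation lemma applies on each $I_j$ and yields $\|u\|_{L^5_tL^{10}_x(I_j)}\leq F(E_\ast)+C\delta$. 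Summing in $j$ gives the required bound $\|u\|_{L^5_tL^{10}_x(I)}\leq C(E,M,\beta)$.

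Uniqueness follows by applying Lemma~\ref{lm:perturbationresult} to two candidate solutions. When $I=\R$, the finiteness of $\|u\|_{L^5_tL^{10}_x(\R\times\R^3)}$, combined with the Strichartz inequality applied to $\partial_{tt}u-\Delta u=f-au-u^5$, produces the scattering profiles $u^L_\pm$ in the standard fashion. The main obstacle is really the a priori energy step: the $L^{5/4}_tL^{5/2}_x$ hypothesis on $a$, though natural for Strichartz-type purposes, would not suffice to close a Gronwall argument at the energy level; the additional $L^1_tL^3_x$ assumption is precisely what is needed to tame the borderline term $\int a u u_t\,dx$. Once the energy bound is secured, the iteration of the perturbation lemma is routine.
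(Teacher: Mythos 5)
Your proof is correct and follows what is almost certainly the same route as the reference \cite{JiaLiuXu} (this paper states Lemma~\ref{lm:globalregularityofperturbedwave} without proof, deferring to that work): an a priori energy bound via Gronwall, followed by a partition of $I$ into finitely many subintervals on which the perturbation lemma can be applied with the pure defocusing evolution as the approximate solution, and finally scattering from the resulting global finite $L^5_tL^{10}_x$ bound since $f-au-u^5\in L^1_tL^2_x(\R\times\R^3)$.

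Two small points worth tightening. First, the number of subintervals $N$ in your partition is chosen after $\delta$, which in turn depends on $E_\ast$ and hence on $(E,M,\beta)$; you should state the order of choices explicitly ($E_\ast$, then $\delta$ via the smallness threshold $\epsilon_0$ in Lemma~\ref{lm:perturbationresult}, then $N$ so that $\|a\|_{L^{5/4}_tL^{5/2}_x(I_j)}^{5/4}+\|f\|_{L^1_tL^2_x(I_j)}$ is small on each piece) so that the final bound is honestly $C(E,M,\beta)$ and not circular. Second, the argument is really a continuation/bootstrap: local existence gives a solution on a maximal interval, the energy bound holds as long as the solution exists, and the perturbation lemma on each $I_j$ extends the solution with a controlled $L^5_tL^{10}_x$ norm, so the standard blow-up criterion gives global existence on all of $I$; it is worth spelling this out rather than assuming $u$ is given on $I$ from the start. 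Your closing observation that the $L^1_tL^3_x$ hypothesis on $a$ is exactly what closes the energy inequality (since $L^{5/4}_tL^{5/2}_x$ alone would force the inadmissible $L^{10}_x$ norm of $u$ into the estimate) is accurate and is precisely the reason both norms appear in the hypotheses.
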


The following lemma shows that for potentials $a\in L^{5/4}_tL^{5/2}_x$ (thus with space-time decay), large or small profiles are essentially not influenced by the potential.

\begin{lemma}\label{lm:aux}
Let $a\in L^{5/4}_tL^{5/2}_x(\R\times \R^3)$ and $U^L$ be a solution to the free wave equation in $\R\times \R^3$. Take parameters $(\lambda_n,t_n)$ with $\lambda_n>0,\,t_n\in \R$. Assume one of the following conditions holds:\\
1. $t_n\equiv 0$, $\lim\limits_{n\to\infty}(\lambda_n+\frac{1}{\lambda_n})=\infty$,\\
2. $\lim\limits_{n\to\infty}\frac{t_n}{\lambda_n}\in\{\pm\infty\}$.\\
Let $U$ be the nonlinear profile associated with $U^L,\,\lambda_n,\,t_n$. More precisely
\begin{equation}\label{eq:defocusingwave}
\partial_{tt}U-\Delta U+U^5=0\,\,{\rm in\,} \R\times \R^3,
\end{equation}
with $\overrightarrow{U}(0)=(U^L(0),\partial_tU^L(0))$\, if\, $t_n\equiv 0$; or with 
\begin{equation}
\lim_{t\to +\infty}\|\overrightarrow{U}(t)-\overrightarrow{U^L}(t)\|_{\dot{H}^1\times L^2}=0,\,\,(\lim_{t\to-\infty})
\end{equation}
 if\, $\lim_{n\to\infty}\frac{t_n}{\lambda_n}=-\infty$ ($\lim=\infty$ respectively). Let $u_n$ be the solution to the Cauchy problem
\begin{equation}
\partial_{tt}u_n-\Delta u_n+a(t,x)u_n+u_n^5=0\,\,{\rm in\,}\R\times \R^3,
\end{equation}
with $\overrightarrow{u}_n(0)=\left(\frac{1}{\lambda_n^{1/2}}U^L(-\frac{t_n}{\lambda_n},\frac{x}{\lambda_n}),\frac{1}{\lambda_n^{3/2}}\partial_tU^L(-\frac{t_n}{\lambda_n},\frac{x}{\lambda_n})\right)$. Then
\begin{equation}
\lim_{n\to\infty}\left(\sup_{t\in \R}\|\overrightarrow{u_n}(t)-\overrightarrow{U_n}(t)\|_{\dot{H}^1\times L^2}+\|u_n-U_n\|_{L^5_tL^{10}_x(\R\times \R^3)}\right)=0,
\end{equation}
where $\overrightarrow{U}_n(x,t)=\left(\frac{1}{\lambda_n^{1/2}}U(\frac{t-t_n}{\lambda_n},\frac{x}{\lambda_n}),\frac{1}{\lambda_n^{3/2}}\partial_tU(\frac{t-t_n}{\lambda_n},\frac{x}{\lambda_n})\right)$.
\end{lemma}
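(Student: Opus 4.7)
\smallskip
\noindent
\textbf{Proof proposal for Lemma \ref{lm:aux}.} The plan is to realize $U_n$ as an approximate solution to the equation with potential $a$, and then invoke the perturbation result Lemma \ref{lm:perturbationresult}. Since $U$ satisfies (\ref{eq:defocusingwave}), by scale invariance $U_n$ satisfies $\partial_{tt}U_n-\Delta U_n+U_n^5=0$ on $\R\times\R^3$, so that
\begin{equation*}
\partial_{tt}U_n-\Delta U_n + a(t,x) U_n + U_n^5 = a(t,x) U_n =: e_n.
\end{equation*}
Also, $\|U_n\|_{L^5_tL^{10}_x}=\|U\|_{L^5_tL^{10}_x}$ is bounded (by scattering plus global well-posedness for defocusing energy-critical wave), and $\|a\|_{L^{5/4}_tL^{5/2}_x}$ is bounded by hypothesis. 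The task therefore reduces to proving two vanishing statements: (i) $\|e_n\|_{L^1_tL^2_x(\R\times\R^3)}\to 0$, and (ii) $\|\overrightarrow{u}_n(0)-\overrightarrow{U}_n(0)\|_{\dot H^1\times L^2}\to 0$; given these, Lemma \ref{lm:perturbationresult} delivers the conclusion.

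Statement (ii) is essentially built into the definitions. In case 1 ($t_n\equiv 0$) one reads directly from the definition of the nonlinear profile that $\overrightarrow{u}_n(0)=\overrightarrow{U}_n(0)$, so the difference is identically zero. In case 2, $\overrightarrow{u}_n(0)$ uses the free profile $\overrightarrow{U}^L(-t_n/\lambda_n)$ while $\overrightarrow{U}_n(0)$ uses $\overrightarrow{U}(-t_n/\lambda_n)$. After undoing the scaling, which is an isometry on $\dot H^1\times L^2$, the difference equals $\|\overrightarrow{U}(-t_n/\lambda_n)-\overrightarrow{U}^L(-t_n/\lambda_n)\|_{\dot H^1\times L^2}$, which tends to zero by the scattering hypothesis on $U$ as $-t_n/\lambda_n\to\pm\infty$.

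The main work is (i); this will be the obstacle, because a naive H\"older estimate is exactly scale-invariant and produces boundedness rather than smallness. I would exploit concentration via a two-parameter cutoff. Given $\epsilon>0$, approximate $a$ in $L^{5/4}_tL^{5/2}_x$ by its truncation $a_M:=a\,\mathbf{1}_{\{|t|\leq M,\,|x|\leq M\}}$, so that $\|a-a_M\|_{L^{5/4}_tL^{5/2}_x}<\epsilon$ for $M$ large. H\"older immediately controls
\begin{equation*}
\|(a-a_M) U_n\|_{L^1_tL^2_x}\leq \|a-a_M\|_{L^{5/4}_tL^{5/2}_x}\|U_n\|_{L^5_tL^{10}_x}<\epsilon\,\|U\|_{L^5_tL^{10}_x}.
\end{equation*}
For the remainder $\|a_M U_n\|_{L^1_tL^2_x}$ I would change to rescaled variables $s=(t-t_n)/\lambda_n$, $y=x/\lambda_n$, which turns the integral into $\|b_{n,M}\,U\|_{L^1_sL^2_y}$ with $b_{n,M}(s,y)=\lambda_n^2 a_M(\lambda_n s+t_n,\lambda_n y)$ supported in $E_n:=\{|\lambda_n s+t_n|\leq M,\ |y|\leq M/\lambda_n\}$. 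Since $\|b_{n,M}\|_{L^{5/4}_sL^{5/2}_y}=\|a_M\|_{L^{5/4}_tL^{5/2}_x}\leq \|a\|_{L^{5/4}_tL^{5/2}_x}$ (wave-scale invariance of this norm), H\"older gives
\begin{equation*}
\|a_M U_n\|_{L^1_tL^2_x}\leq \|a\|_{L^{5/4}_tL^{5/2}_x}\,\|U\,\mathbf{1}_{E_n}\|_{L^5_sL^{10}_y}.
\end{equation*}
It then remains to check that in every case the set $E_n$ degenerates in a way that forces $\|U\mathbf{1}_{E_n}\|_{L^5_sL^{10}_y}\to 0$ by dominated convergence on $\int\|U(s,\cdot)\|_{L^{10}_y}^5\,ds$: if $t_n\equiv 0$ and $\lambda_n\to\infty$, the projection of $E_n$ onto $s$ is $\{|s|\leq M/\lambda_n\}$, which shrinks to a point; if $t_n\equiv 0$ and $\lambda_n\to 0$, the same projection still shrinks to $\{0\}$ as $M/\lambda_n$ is replaced by the vanishing measure argument combined with $|y|\leq M/\lambda_n\to 0$ controlling $\|U(s,\cdot)\mathbf{1}_{|y|\leq M/\lambda_n}\|_{L^{10}_y}\to 0$ pointwise in $s$; and in case 2, $E_n$ is contained in $\{|s+t_n/\lambda_n|\leq M/\lambda_n\}$ whose $s$-projection is pushed to $\pm\infty$, so the tail $\|U\mathbf{1}_{|s|\geq C_n}\|_{L^5_sL^{10}_y}$ vanishes. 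Taking $M$ large first and then $n\to\infty$ yields $\|aU_n\|_{L^1_tL^2_x}\to 0$, which closes the argument.
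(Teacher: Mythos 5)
Your overall strategy is the right one: write $U_n$ as an approximate solution to the equation with potential $a$, note that the error is $e_n = a\,U_n$, and feed this into Lemma \ref{lm:perturbationresult}. The verification of smallness of the initial-data mismatch is correct (it is zero in case 1 and reduces to the scattering hypothesis in case 2), and the Hölder reduction to showing $\|a\,U_n\|_{L^1_tL^2_x}\to 0$ is the standard and sound route.

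However, there is a genuine gap in the verification of $\|a\,U_n\|_{L^1_tL^2_x}\to 0$: your truncation argument only truncates $a$, and this does not suffice when $\lambda_n\to 0$. After the change of variables $s=(t-t_n)/\lambda_n$, $y=x/\lambda_n$, the truncated potential $a_M$ (supported in $|t|\leq M,\,|x|\leq M$) is carried to a set $E_n=\{|\lambda_n s+t_n|\leq M,\ |y|\leq M/\lambda_n\}$. Your argument hinges on $E_n$ degenerating so that $\|U\,\mathbf{1}_{E_n}\|_{L^5_sL^{10}_y}\to 0$. When $\lambda_n\to\infty$ the $s$-interval shrinks and this works; when $t_n/\lambda_n\to\pm\infty$ with $\lambda_n$ bounded below the interval escapes to infinity and this works. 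But when $\lambda_n\to 0$ (a genuine possibility in case~1, and also in case~2, e.g.\ $t_n=\sqrt{\lambda_n}\to 0$), one has $M/\lambda_n\to\infty$, not $\to 0$ as you wrote, so $E_n$ \emph{expands} — in the case $t_n\equiv 0$, $\lambda_n\to 0$, the set $E_n$ exhausts $\R\times\R^3$ and $\|U\,\mathbf{1}_{E_n}\|_{L^5_sL^{10}_y}\to\|U\|_{L^5_sL^{10}_y}\neq 0$, so the claimed smallness fails. The missing ingredient is a \emph{second} truncation: approximate $U$ in $L^5_sL^{10}_y$ by a compactly supported $U_M$ with $\|U-U_M\|_{L^5L^{10}}<\epsilon$ and set $U_{n,M}(t,x)=\lambda_n^{-1/2}U_M((t-t_n)/\lambda_n, x/\lambda_n)$. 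Then $\|a(U_n-U_{n,M})\|_{L^1L^2}\leq \epsilon\,\|a\|_{L^{5/4}L^{5/2}}$ by Hölder and scale invariance, and $\|a_M\,U_{n,M}\|_{L^1L^2}\leq \|a\|_{L^{5/4}L^{5/2}({\rm supp}\,U_{n,M})}\,\|U\|_{L^5L^{10}}$; when $\lambda_n\to 0$, ${\rm supp}\,U_{n,M}$ is contained in a region of diameter $O(M\lambda_n)$ so $\|a\|_{L^{5/4}L^{5/2}({\rm supp}\,U_{n,M})}\to 0$ by dominated convergence. Combining the two truncations and splitting into the subcases (one dominated-convergence argument on the $a$ side when $\lambda_n$ is bounded below and escapes, the other on the $U_n$ side when $\lambda_n\to 0$) closes the argument in every case.
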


The following profile decomposition adapted for wave equation with potential plays an important role in our analysis.

\begin{lemma}\label{lm:profilewithpotential}
Let $a\in L^{5/4}_tL^{5/2}_x\cap L^{1}_tL^3_x(\R\times \R^3)$. Suppose a radial sequence $(u_{0n},u_{1n})\in \dot{H}^1\times L^2$ is uniformly bounded and that we have the following linear profile decompositions (see Bahouri-Gerard\cite{BaGe})
\begin{equation}
(u_{0n},u_{1n})=\overrightarrow{U}^L_1(0)+\sum_{j=2}^J(\frac{1}{\lambda_{jn}^{1/2}}U^L_j(-\frac{t_{jn}}{\lambda_{jn}},\frac{x}{\lambda_{jn}}),\frac{1}{\lambda_{jn}^{3/2}}\partial_tU^L_j(-\frac{t_{jn}}{\lambda_{jn}},\frac{x}{\lambda_{jn}}))+\overrightarrow{w}_{Jn}(0),
\end{equation}
with the following properties:\\
\begin{eqnarray*}
&&U^L_j,\,w_{Jn} \,{\rm \,are\,\,radial\,\,and\, \,solve\,\,the\,\, free\, \,wave\,\, equation}\,\,{\rm for\,\,each}\,\,j,\,J;\\
&&{\rm either}\,\,t_{jn}\in \R,\,\lambda_{jn}>0,\,\lim_{n\to\infty}\frac{t_{jn}}{\lambda_{jn}}\in\{\pm\infty\}{\rm \,\,or\,\,}t_{jn}\equiv 0,\,\lim_{n\to\infty}\left(\lambda_{jn}+\frac{1}{\lambda_{jn}}\right)=\infty;\\
&& {\rm for\,}\,j\neq j',\,\lim_{n\to\infty}\left(\frac{\lambda_{jn}}{\lambda_{j'n}}+\frac{\lambda_{j'n}}{\lambda_{jn}}+\frac{|t_{jn}-t_{j'n}|}{\lambda_{jn}}\right)=\infty;\\
&&{write}\,\,w_{Jn}(t,x)=\frac{1}{\lambda_{jn}^{1/2}}\tilde{w}^j_{Jn}(\frac{t-t_{jn}}{\lambda_{jn}},\frac{x}{\lambda_{jn}}),\,\,{then}\,\,\tilde{w}_{Jn}^j\rightharpoonup 0,\,\,{\rm and\,}\,w_{Jn}\rightharpoonup 0,\,\,{\rm as}\,\,n\to\infty;\\
&&\lim_{J\to\infty}\limsup_{n\to\infty}\|w_{Jn}\|_{L^5_tL^{10}_x(\R \times \R^3)}=0.
\end{eqnarray*}
Let $U_1$ satisfy
\begin{equation}
\partial_{tt}U_1-\Delta U_1+a(t,x)U_1+U_1^5=0,{\rm \,in\,}\R\times \R^3,
\end{equation}
with $\overrightarrow{U}_1(0)=\overrightarrow{U}^L_1(0)$.
Let $U_j$ be the nonlinear profile associated to $U^L_j,\,\lambda_{jn},\,t_{jn}$ as defined in Lemma \ref{lm:aux} for $j\ge 2$. Let $u_n$ be the solution to
\begin{equation}
\partial_{tt}u_n-\Delta u_n+a(t,x)u_n+u_n^5=0,\,{\rm in\,}\R\times \R^3,
\end{equation}
with $\overrightarrow{u}_n(0)=(u_{0n},u_{1n})$. Then we have the following decomposition: 
\begin{equation}\label{eq:decomposition*}
\overrightarrow{u}_n(t)=\overrightarrow{U_1}(t)+\sum_{j=2}^J \overrightarrow{U}_{jn}(t)+\overrightarrow{w}_{Jn}(t)+\overrightarrow{r}_{Jn}(t),
\end{equation}
with 
\begin{equation}\label{eq:errorterm*}
\lim_{J\to\infty}\limsup_{n\to\infty}\left(\sup_{t\in \R}\|\overrightarrow{r}_{Jn}(t)\|_{\dot{H}^1\times L^2}+\|r_{Jn}\|_{L^5_tL^{10}_x(\R\times \R^3)}\right)=0,
\end{equation}
where $\overrightarrow{U}_{jn}(t,x)=\left(\frac{1}{\lambda_{jn}^{1/2}}U_j(\frac{t-t_{jn}}{\lambda_{jn}},\frac{x}{\lambda_{jn}}),\frac{1}{\lambda_{jn}^{3/2}}\partial_tU_j(\frac{t-t_{jn}}{\lambda_{jn}},\frac{x}{\lambda_{jn}})\right)$. Moreover, denoting $U_{1n}=U_1$, for $\rho_n>\sigma_n>0$ and $\theta_n\in \R$ we have the following orthogonality property for $1\leq j\not= j'$
\begin{eqnarray}
&&\label{eq:no1}\lim_{n\to\infty}\int_{\sigma_n<|x|<\rho_n}\nabla U_{jn}\nabla U_{j'n}+\partial_tU_{jn}\partial_tU_{j'n}(\theta_n,x)\,dx=0;\\
&&\label{eq:no2}\lim_{n\to\infty}\int_{\sigma_n<|x|<\rho_n}\nabla U_{jn}\nabla w_{Jn}+\partial_tU_{jn}\partial_tw_{Jn}(\theta_n,x)\,dx=0.
\end{eqnarray}
\end{lemma}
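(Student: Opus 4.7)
The plan is to follow the standard Bahouri--G\'erard nonlinear profile construction, adapted to accommodate the space-time decaying potential $a(t,x)$ via the perturbation Lemma 4.1 and the profile-vs-potential decoupling of Lemma 4.3.

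First I would verify each nonlinear profile is globally well-defined with finite $\st{5}{10}$ norm: $U_1$ is handled by Lemma 4.2 (global existence/scattering for the perturbed defocusing equation with finite energy data), while $U_j$ for $j\ge 2$ is a global solution to the \emph{unperturbed} defocusing equation, covered by the classical Struwe--Grillakis--Bahouri--Shatah theory. By the energy Pythagorean identity at $t=0$ (standard in Bahouri--G\'erard), for any $\varepsilon>0$ only finitely many profiles have energy exceeding $\varepsilon$, so the tail profiles obey small-data Strichartz bounds and $\sum_{j\ge J_0}\|U_j\|_{\st{5}{10}}^2$ is controlled by the total energy and hence summable. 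This is what ultimately justifies truncating the sum at some large $J$.

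Second, I would set
\begin{equation*}
\OR{v}_{Jn}(t) := \OR{U}_1(t) + \sum_{j=2}^J \OR{U}_{jn}(t) + \OR{w}_{Jn}(t),
\end{equation*}
and insert it into the perturbed equation to compute
\begin{equation*}
E_{Jn} := \partial_{tt}v_{Jn} - \Delta v_{Jn} + a\,v_{Jn} + v_{Jn}^5.
\end{equation*}
For $j\ge 2$ the profile $\OR{U}_{jn}$ solves the unperturbed equation, so $E_{Jn}$ contains the spurious terms $a\sum_{j\ge 2}U_{jn}+aw_{Jn}$ together with the cross interactions from $v_{Jn}^5$. To neutralize the first piece I would invoke Lemma 4.3, which asserts that the solution to the perturbed equation with the rescaled/shifted data of each $\OR{U}_{jn}$ agrees with $\OR{U}_{jn}$ up to an $o(1)$ error in energy and $\st{5}{10}$; thus the incorrect potential contribution for each profile is absorbable into a negligible correction. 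For the last piece, since $a\in \st{5/4}{5/2}\cap\st{1}{3}$ and $\|w_{Jn}\|_{\st{5}{10}}\to 0$ as $J\to\infty$ (after $n\to\infty$), H\"older yields $\|a\,w_{Jn}\|_{\st{1}{2}}=o_{J,n}(1)$.

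The delicate and main step is controlling the nonlinear interaction
\begin{equation*}
v_{Jn}^5 - U_1^5 - \sum_{j=2}^J U_{jn}^5 - w_{Jn}^5 \;=\; \text{(cross terms)}
\end{equation*}
in $\st{1}{2}$. This is the classical heart of Bahouri--G\'erard: the pseudo-orthogonality $\lim_n\bigl(\tfrac{\lambda_{jn}}{\lambda_{j'n}}+\tfrac{\lambda_{j'n}}{\lambda_{jn}}+\tfrac{|t_{jn}-t_{j'n}|}{\lambda_{jn}}\bigr)=\infty$ for $j\ne j'$ implies, via change of variables into each profile frame and density of smooth compactly supported approximants of the nonlinear profiles, that every genuine cross term tends to zero in $\st{1}{2}$ as $n\to\infty$; products involving $w_{Jn}$ are tamed by the smallness of $\|w_{Jn}\|_{\st{5}{10}}$ and Strichartz embedding. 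Together with the tail estimate from Step~1, this yields $\|E_{Jn}\|_{\st{1}{2}}\to 0$ after first letting $n\to\infty$ and then $J\to\infty$. Since the linear profile decomposition gives $\OR{v}_{Jn}(0)=(u_{0n},u_{1n})+o_n(1)$ in $\HL$, Lemma 4.1 with $\tilde u=v_{Jn}$ and $u=u_n$ then delivers (4.7) with $\OR{r}_{Jn}:=\OR{u}_n-\OR{v}_{Jn}$ satisfying (4.8). The orthogonality relations (4.9)--(4.10) over annuli are inherited from the free-linear case (where they follow from the parameter pseudo-orthogonality by rescaling and Cauchy--Schwarz) and transferred to the nonlinear profiles via Lemma 4.3, which controls the energy-norm difference between $\OR{U}_{jn}$ and the rescaled free nonlinear profile uniformly in time.

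The hardest part, to my eye, is juggling the ``wrong'' potential coupling for the concentrated/translated profiles while keeping the nonlinear cross-term estimates uniform in $J$ before the limit $n\to\infty$; Lemma 4.3 is the critical lubricant allowing the decoupling, and its careful application is what distinguishes this argument from the usual Bahouri--G\'erard one without potential.
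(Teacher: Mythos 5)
The paper does not actually prove this lemma here; it explicitly refers the reader to \cite{JiaLiuXu} for the proof, so there is no in-paper proof to compare against. Evaluating your plan on its own merits, the outline is essentially the expected Bahouri--G\'erard nonlinear profile decomposition adapted to the space-time decaying potential $a$, with Lemma~\ref{lm:perturbationresult} supplying the stability step, Lemma~\ref{lm:globalregularityofperturbedwave} supplying global $\st{5}{10}$ bounds for $U_1$, and Lemma~\ref{lm:aux} handling the fact that the translated/rescaled profiles $U_{jn}$ with $j\ge 2$ solve the \emph{unperturbed} equation. The decomposition into ``spurious potential terms'' $a\,U_{jn}$, $a\,w_{Jn}$ and nonlinear cross terms in $v_{Jn}^5$, followed by the perturbation lemma, is the right structure and the order of limits ($n\to\infty$ then $J\to\infty$) is correct.

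Two points are a bit loose. First, your invocation of Lemma~\ref{lm:aux} to ``neutralize'' the terms $a\,U_{jn}$ is indirect as phrased: Lemma~\ref{lm:aux} compares the perturbed solution with rescaled data to the unperturbed nonlinear profile, but what you actually need in the error bookkeeping is the direct estimate $\|a\,U_{jn}\|_{\st{1}{2}}\to 0$ as $n\to\infty$ for each fixed $j\ge 2$. That estimate follows from $a\in\st{5/4}{5/2}$, the $\st{5}{10}$ bound on the rescaled profile, and the divergence of the parameters $(\lambda_{jn},t_{jn})$ which pushes the effective support of $U_{jn}$ away from the regions where $a$ carries mass; it is a scale-invariant estimate (both norms are invariant under the wave scaling), so the decay of $a$ is genuinely used. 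Making this direct estimate, rather than quoting Lemma~\ref{lm:aux} as a black box, keeps the error bookkeeping clean and is almost certainly how the proof in \cite{JiaLiuXu} proceeds. Second, the orthogonality relations \eqref{eq:no1}--\eqref{eq:no2} over \emph{arbitrary} annuli $\sigma_n<|x|<\rho_n$ at \emph{arbitrary} times $\theta_n$ are a nontrivial assertion needed later for the exterior-energy arguments, and they do not just transfer from the linear case ``via Lemma~\ref{lm:aux}.'' The standard argument approximates each nonlinear profile $U_j$ by a compactly supported smooth function in its own $(t,x)$ frame, then uses the divergence of the scale/time parameters for $j\ne j'$ (and the vanishing weak limits for $w_{Jn}$) to show that the supports in the original frame effectively disjointify in any annulus, so the cross integrals vanish. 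You gesture at this but should state it explicitly, since it is the part of the lemma that is most specific to the channel-of-energy application downstream.

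Aside from these two points needing fuller justification, the overall strategy is sound and matches what the cited proof is expected to do.
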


We also need the following channel of energy inequality from \cite{JiaLiuXu}, which was proved with similar arguments as in Duyckaerts, Kenig and Merle \cite{DKM}.
\begin{theorem}\label{th:channelofenergy}
Suppose radial finite energy $(u_0,u_1)\not\equiv (\phi,0)$ for any steady state solution $(\phi,0)$ of equation (\ref{eq:mainequation}). Let $u\in C(\R,\dot{H}^1)\cap L^5_tL^{10}_x((-T,T)\times \R^3)$ for any $T\in(0,\infty)$ be the unique solution to equation (\ref{eq:mainequation}) with $\overrightarrow{u}(0)=(u_0,u_1)$. Then there exists $R>0$ and $\delta>0$ such that
\begin{equation}\label{eq:channelofenergy}
\int_{|x|\ge R+|t|} [|\nabla u|^2+(\partial_tu)^2](t,x)\,dx\ge \delta>0,
\end{equation}
for all $t\ge 0$ or all $t\leq 0$.
\end{theorem}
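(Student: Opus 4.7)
The plan is to argue by contradiction, combining the classical Duyckaerts--Kenig--Merle exterior energy inequality for the radial free 3D wave equation with a perturbation argument that treats $-Vu + u^5$ as a small correction in the far field, and closing with a rigidity step that invokes the soliton resolution in Theorem~\ref{th:JLX}.

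Suppose the conclusion fails. Then there exist sequences $R_m \uparrow \infty$, $\delta_m \downarrow 0$, and times $t_m^+ > 0$, $t_m^- < 0$ (necessarily with $|t_m^\pm| \to \infty$, as otherwise continuity in time at $t = 0$ would already produce a positive exterior energy bound for some finite $R$) such that
\[
\int_{|x| > R_m + |t_m^\pm|} \bigl(|\nabla u|^2 + (\partial_t u)^2\bigr)(t_m^\pm, x)\, dx < \delta_m.
\]
First I would compare $\OR{u}$ in the exterior region $\{|x| > R + |t|\}$ with the free radial wave $\OR{u}^L$ having Cauchy data $(u_0, u_1)\mathbf{1}_{|x|>R}$ at $t = 0$. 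Since $V \in Y$ decays polynomially and $u\in L^5_tL^{10}_x(\R \times \R^3)$ (obtained by linearizing around an appropriate steady state $\phi$, treating $a := -V + 5\phi^4$ plus lower-order corrections in Lemma~\ref{lm:globalregularityofperturbedwave}), the forcing $-Vu + u^5$ restricted to the exterior region has $L^1_tL^2_x$ norm tending to $0$ as $R \to \infty$. Combined with finite speed of propagation and Lemma~\ref{lm:perturbationresult}, this yields
\[
\sup_{t\in \R}\, \bigl\|\OR{u}(t) - \OR{u}^L(t)\bigr\|_{\dot{H}^1\times L^2(|x|> R+|t|)} \longrightarrow 0 \quad \text{as } R \to \infty.
\]

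Next, invoking the one-sided exterior channel-of-energy inequality for radial free 3D waves (at least one of the forward or backward exterior energies at time $t$ outside $|x|>R+|t|$ is bounded below by $\tfrac12 \|\pi_R^\perp(u_0,u_1)\|^2$, where $\pi_R^\perp$ projects off the one-dimensional obstruction $\mathrm{span}\{(1/r,0)\}$ on $|x|>R$), together with the preceding approximation and the failure hypothesis, we are forced into
\[
\bigl\|\pi_R^\perp(u_0,u_1)\bigr\|_{\dot{H}^1\times L^2(|x|>R)} \longrightarrow 0 \quad \text{as } R \to \infty,
\]
which is a strong structural constraint on the far-field tail of $(u_0, u_1)$.

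The main obstacle is the rigidity step that converts this constraint into the conclusion $(u_0,u_1) = (\phi,0)$ for some steady state, contradicting the hypothesis. I would apply the profile decomposition of Lemma~\ref{lm:profilewithpotential} along the sequences $\OR{u}(t_m^\pm)$: the vanishing exterior energy at both time ends, combined with the orthogonality identities (\ref{eq:no1})--(\ref{eq:no2}) and Lemma~\ref{lm:aux}, forces every nontrivial profile with either $\lambda_{jm}\to 0,\infty$ or $|t_{jm}|/\lambda_{jm}\to \infty$ to escape to infinity and thus to contribute to the exterior energy, contradicting the smallness there. This forces the trajectory $\{\OR{u}(t)\}_{t\in \R}$ to be pre-compact in $\dot{H}^1\times L^2$ modulo the harmless $(\alpha/r, 0)$-tail. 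Applying Theorem~\ref{th:JLX} in both time directions then forces the free-radiation components to vanish, so $\OR{u}(t)\to (\phi_\pm, 0)$ for radial steady states $\phi_\pm$ as $t \to \pm\infty$. Finally, equality of energies together with uniqueness of the Cauchy problem (backward and forward) forces $\phi_+ = \phi_- =: \phi$ and then $\OR{u} \equiv (\phi, 0)$, contradicting our assumption on $(u_0, u_1)$.
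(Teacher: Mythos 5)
The paper does not actually prove Theorem~\ref{th:channelofenergy} here; it quotes it from \cite{JiaLiuXu}, where it is established by a direct argument in the style of \cite{DKM}. Your first two moves --- comparing $\OR{u}$ in the exterior cone with a free wave via Lemma~\ref{lm:perturbationresult} and the decay of $V$, and then applying the radial exterior channel-of-energy inequality with the projection $\pi_R^\perp$ off $\mathrm{span}\{(1/r,0)\}$ --- are the right opening and match the DKM framework. The problem is the rigidity step, and it is a genuine gap, not a detail.

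Two issues. First, invoking Theorem~\ref{th:JLX} to close the argument is circular: in \cite{JiaLiuXu} the soliton resolution of Theorem~\ref{th:JLX} is proved \emph{using} this very channel-of-energy inequality, so you cannot use the former to establish the latter. Second, and independently of circularity, the step ``$\OR{u}(t)\to(\phi_\pm,0)$ with zero radiation in both time directions, together with equality of energies and uniqueness of the Cauchy problem, forces $\phi_+=\phi_-$ and $\OR{u}\equiv(\phi,0)$'' is false as stated. Equality of energies does not give $\phi_+=\phi_-$ (note $\mathcal{E}(Q,0)=\mathcal{E}(-Q,0)$), and even if $\phi_+=\phi_-$ nothing in your chain rules out a homoclinic orbit converging to $(\phi,0)$ at both ends --- uniqueness of the Cauchy problem certainly does not, since a homoclinic orbit is a perfectly good solution. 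In fact, the statement ``a radial solution converging with no radiation to steady states in both time directions must itself be a steady state'' is precisely the content of Theorem~\ref{th:channelofenergy} in contrapositive form; you are assuming what you need to prove. The actual proof in \cite{JiaLiuXu} (following \cite{DKM}) avoids soliton resolution altogether: after the small-exterior-energy reduction it derives a differential inequality for the coefficient $\ell(R)$ of the $1/r$-tail of $u_0$ on $\{|x|>R\}$, forces $(u_0,u_1)$ to agree near infinity with a steady-state profile, and then propagates inward by unique continuation for the elliptic equation. That direct exterior-ODE argument is the missing ingredient in your proposal.
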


This theorem tells us that if a radial solution to equation (\ref{eq:mainequation}) is not a steady state, 
then it must emit energy to spatial infinity. For applications below we also need the following quantitative version of Theorem~\ref{th:channelofenergy}.

\begin{theorem}\label{th:quantitativechannel}
Take $V\in\Omega\subset Y$. Suppose that radial finite energy initial data $(u_0,u_1)\not\equiv (\phi,0)$ for any steady state solution of equation (\ref{eq:mainequation}), with $\|(u_0,u_1)\|_{\HL(\R^3)}\leq M<\infty$. Let $$u\in C(\R,\dot{H}^1)\cap L^5_tL^{10}_x((-T,T)\times \R^3)$$ for any $T\in(0,\infty)$ be the unique solution to equation (\ref{eq:mainequation}) with $\overrightarrow{u}(0)=(u_0,u_1)$. 
Denote by $\Sigma$  the set of radial steady states of equation (\ref{eq:mainequation}) and define
\begin{equation}\label{eq:distance}
\delta:=\inf\,\left\{\|(u_0,u_1)-(\phi,0)\|_{\dot{H}^1\times L^2}:\,(\phi,0)\in \Sigma\right\}>0.
\end{equation}
Then there exists $c=c(\delta,M, V)>0$ such that
\begin{equation}\label{eq:channelofenergy*}
\int_{|x|\ge |t|} [|\nabla u|^2+(\partial_tu)^2](t,x)\,dx\ge c,
\end{equation}
for all $t\ge 0$ or all $t\leq 0$.
\end{theorem}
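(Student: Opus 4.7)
The plan is to argue by contradiction via a concentration-compactness scheme that combines the profile decomposition of Lemma \ref{lm:profilewithpotential} with the qualitative channel of energy of Theorem \ref{th:channelofenergy}. Suppose the conclusion fails. Then there exist $\delta > 0$, $M > 0$, $V \in \Omega$, a sequence of radial initial data $(u_{0,n}, u_{1,n})$ with $\|(u_{0,n}, u_{1,n})\|_{\HL} \leq M$ and ${\rm dist}_{\HL}\bigl((u_{0,n}, u_{1,n}),\Sigma\bigr) \geq \delta$, and times $t_n^+ \geq 0$, $t_n^- \leq 0$ such that
\[
\int_{|x| \geq |t_n^\pm|} \bigl[|\nabla u_n|^2 + (\partial_t u_n)^2\bigr](t_n^\pm, x)\,dx \longrightarrow 0 \qquad (n \to \infty),
\]
where $u_n$ is the corresponding global solution to (\ref{eq:mainequation}).

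Next I would apply the linear Bahouri-Gerard profile decomposition to $(u_{0,n}, u_{1,n})$ and pass to a nonlinear decomposition of $\vec{u}_n(t)$ using Lemma \ref{lm:profilewithpotential}, giving
\[
\vec{u}_n(t) = \vec{U}_1(t) + \sum_{j=2}^J \vec{U}_{jn}(t) + \vec{w}_{Jn}(t) + \vec{r}_{Jn}(t),
\]
in which $U_1$ solves equation (\ref{eq:mainequation}) while the profiles $U_j$ for $j \geq 2$ solve the defocusing energy-critical wave equation without potential, since their concentrating scales or unbounded time translations make $V$ a negligible perturbation on the relevant space-time scales. The orthogonality relations (\ref{eq:no1})--(\ref{eq:no2}) make the exterior-energy functional almost additive along the profiles at $t = t_n^\pm$, so the vanishing of the total exterior energy forces each individual profile's exterior-energy contribution at $t_n^\pm$ to tend to zero.

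I then derive a contradiction profile by profile. For $U_1$, Theorem \ref{th:channelofenergy} ensures that if $U_1 \notin \Sigma$ then $\int_{|x| \geq R + |t|} \|\vec{U}_1\|_{\HL}^2 \geq c_1 > 0$ for all $t \geq 0$ or all $t \leq 0$; since $\{|x| \geq R + |t|\} \subseteq \{|x| \geq |t|\}$, this contradicts the vanishing at $t_n^+$ or $t_n^-$. Hence $U_1 \in \Sigma$. For each $j \geq 2$, the classical Duyckaerts-Kenig-Merle channel of energy for radial finite energy solutions of the free wave equation in $\R^{3+1}$, combined with the scattering of $U_j$ as a solution of the defocusing critical wave equation without potential, yields an analogous two-sided channel; a case analysis on the limiting behavior of $(t_n^\pm - t_{jn})/\lambda_{jn}$ and $|t_n^\pm|/\lambda_{jn}$ shows that any nontrivial $U_j$ must contribute a positive amount to the exterior energy at $t_n^+$ or $t_n^-$. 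Therefore all $U_j$ for $j \geq 2$ are trivial and $\vec{w}_{Jn} \to 0$ in $\HL$, which forces $(u_{0,n}, u_{1,n}) \to (\phi, 0) \in \Sigma$ strongly in $\HL$, contradicting the distance assumption.

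The main technical obstacle is the case analysis for the rescaled profiles $U_{jn}$: one must carefully track how the growing exterior region $\{|x| \geq |t_n^\pm|\}$ intersects each profile's effective support, which is dictated by the scale $\lambda_{jn}$, the shift $t_{jn}$, and the time $t_n^\pm$. A secondary point is to justify Lemma \ref{lm:profilewithpotential} in the present setting despite the fact that a time-independent $V \in Y$ is not globally in $L^{5/4}_t L^{5/2}_x(\R \times \R^3)$; this can be handled by working on finite but sufficiently large time intervals containing $t_n^\pm$, using global existence and $\HL$ bounds from Theorem \ref{th:JLX} together with finite speed of propagation to compare solutions outside the light cones on which the channel of energy is evaluated.
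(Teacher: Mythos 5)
Your overall architecture — argue by contradiction, pass to a profile decomposition adapted to the wave equation with potential, then rule out each piece using channel-of-energy estimates — is precisely the one the paper uses. The treatment of $U_1$ via Theorem~\ref{th:channelofenergy} and the elimination of the rescaled profiles $U_j$, $j\ge 2$, by a channel estimate match the paper's steps, although the paper simply cites the exterior-energy lower bound for the $U_{jn}$ from \cite{JiaLiuXu} rather than re-deriving it from the free-wave DKM channel plus the case analysis in $(t_n^\pm - t_{jn})/\lambda_{jn}$ as you propose. That alternative derivation is reasonable in spirit, but it is not carried out, and it is the more delicate route. The paper also handles the integrability issue for $V$ more simply: it observes that $\|V\|_{L^{5/4}_tL^{5/2}_x(\{|x|\ge|t|\})}<\infty$ (which holds directly because $\beta>2$), and applies Lemma~\ref{lm:profilewithpotential} in the exterior cone by finite speed of propagation, rather than truncating to finite time intervals.

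There is, however, a genuine gap in your final step. You conclude that since $U_1\in\Sigma$ and $U_j\equiv 0$ for $j\ge 2$, the remainder $\vec{w}_{Jn}\to 0$ in $\HL$, forcing $(u_{0n},u_{1n})\to(\phi,0)$ strongly and contradicting the distance hypothesis. This is false: the profile decomposition only guarantees that $w_{Jn}$ vanishes in the Strichartz norm $L^5_tL^{10}_x$ and converges weakly to $0$; it can and does retain a fixed positive amount of energy. Indeed, in the present situation the decomposition reduces to $(u_{0n},u_{1n})=\vec{U}_1^L(0)+\vec{w}_n(0)$ with $\vec{U}_1^L(0)=(\phi,0)\in\Sigma$, so the hypothesis ${\rm dist}_{\HL}((u_{0n},u_{1n}),\Sigma)\ge\delta$ forces $\|\vec{w}_n(0)\|_{\HL}\ge\delta$, not $\to 0$. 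The missing ingredient is the \emph{linear} channel of energy applied to the free wave $w_n$: since $\|\vec{w}_n(0)\|_{\HL}\ge\delta$, the exterior energy $\int_{|x|\ge|t|}(|\nabla w_n|^2+(\partial_t w_n)^2)\,dx$ is bounded below by a fixed constant for all $t\ge 0$ or all $t\le 0$. Combined with the decomposition and orthogonality (\ref{eq:no2}), this transfers to $u_n$ and produces the contradiction. Without this step, your argument does not close.
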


\begin{proof} Suppose the theorem fails. Then there exists a sequence of solutions $u_n$ to equation (\ref{eq:mainequation}) with initial data $(u_{0n},u_{1n})\in\dot{H}^1\times L^2(\R^3)$ satisfying $\|(u_{0n},u_{1n})\|_{\HL(\R^3)}\leq M$, (\ref{eq:distance}) with a uniform $\delta>0$, and
\begin{equation}\label{eq:forcontradiction}
\inf_{t\ge 0}\int_{|x|\ge |t|} |\nabla u_n|^2+(\partial_tu_n)^2(t,x)\,dx+\inf_{t\leq 0}\int_{|x|\ge |t|} |\nabla u_n|^2+(\partial_tu_n)^2(t,x)\,dx\leq \frac{1}{n}.
\end{equation}
By passing to a subsequence we can assume that $(u_{0n},u_{1n})$ admits the following profile decomposition
\begin{equation}
(u_{0n},u_{1n})=\overrightarrow{U}^L_1(0)+\sum_{j=2}^J\left(\frac{1}{\lambda_{jn}^{1/2}}U^L_j(-\frac{t_{jn}}{\lambda_{jn}},\frac{x}{\lambda_{jn}}),\frac{1}{\lambda_{jn}^{3/2}}\partial_tU^L_j(-\frac{t_{jn}}{\lambda_{jn}},\frac{x}{\lambda_{jn}})\right)+\overrightarrow{w}_{Jn}(0),
\end{equation}
with the following properties:
\begin{eqnarray*}
&&U^L_j,\,w_{Jn} \,{\rm \,are\,\,radial\,\,and\, \,solve\,\,the\,\, free\, \,wave\,\, equation}\,\,{\rm for\,\,each}\,\,j,\,J;\\
&&{\rm either}\,\,t_{jn}\in \R,\,\lambda_{jn}>0,\,\lim_{n\to\infty}\frac{t_{jn}}{\lambda_{jn}}\in\{\pm\infty\}{\rm \,\,or\,\,}t_{jn}\equiv 0,\,\lim_{n\to\infty}\left(\lambda_{jn}+\frac{1}{\lambda_{jn}}\right)=\infty;\\
&& {\rm for\,}\,j\neq j',\,\lim_{n\to\infty}\left(\frac{\lambda_{jn}}{\lambda_{j'n}}+\frac{\lambda_{j'n}}{\lambda_{jn}}+\frac{|t_{jn}-t_{j'n}|}{\lambda_{jn}}\right)=\infty;\\
&&{write}\,\,w_{Jn}(t,x)=\frac{1}{\lambda_{jn}^{1/2}}\tilde{w}^j_{Jn}(\frac{t-t_{jn}}{\lambda_{jn}},\frac{x}{\lambda_{jn}}),\,\,{then}\,\,\tilde{w}_{Jn}^j\rightharpoonup 0,\,\,{\rm and\,}\,w_{Jn}\rightharpoonup 0,\,\,{\rm as}\,\,n\to\infty;\\
&&\lim_{J\to\infty}\limsup_{n\to\infty}\|w_{Jn}\|_{L^5_tL^{10}_x(\R\times \R^3)}=0.
\end{eqnarray*}
Note that 
\begin{equation*}
\|V\|_{L^{5/4}_tL^{5/2}_x(\{(x,t):\,|x|\ge |t|\})}<\infty,
\end{equation*}
thus by finite speed of propagation we can apply Lemma \ref{lm:profilewithpotential} in the exterior light cone $\{(x,t):\,|x|\ge |t|\}$, and obtain
\begin{equation}\label{eq:decomposition}
\overrightarrow{u}_n(t)=\overrightarrow{U_1}(t)+\sum_{j=2}^J \overrightarrow{U}_{jn}(t)+\overrightarrow{w}_{Jn}(t)+\overrightarrow{r}_{Jn}(t), \quad{\rm for}\,\,|x|\ge |t|
\end{equation}
with 
\begin{equation}\label{eq:errorterm}
\lim_{J\to\infty}\limsup_{n\to\infty}\left(\sup_{t\in \R}\|\overrightarrow{r}_{Jn}(t)\|_{\dot{H}^1\times L^2}+\|r_{Jn}\|_{L^5_tL^{10}_x(\R\times \R^3)}\right)=0,
\end{equation}
where $\overrightarrow{U}_{jn}(t,x)=\left(\frac{1}{\lambda_{jn}^{1/2}}U_j(\frac{t-t_{jn}}{\lambda_{jn}},\frac{x}{\lambda_{jn}}),\frac{1}{\lambda_{jn}^{3/2}}\partial_tU_j(\frac{t-t_{jn}}{\lambda_{jn}},\frac{x}{\lambda_{jn}})\right)$, and with $U_j$ and $U_1$ given as in Lemma \ref{lm:aux} and Lemma \ref{lm:profilewithpotential}. Moreover, denoting $U_{1n}=U_1$, for $\rho_n>\sigma_n>0$ and $\theta_n\in \R$ we have the following orthogonality property for $1\leq j\not= j'$
\begin{eqnarray}
&&\label{eq:no1*}\lim_{n\to\infty}\int_{\sigma_n<|x|<\rho_n}\nabla U_{jn}\nabla U_{j'n}+\partial_tU_{jn}\partial_tU_{j'n}(\theta_n,x)\,dx=0;\label{eq:orth1}\\
&&\label{eq:no2*}\lim_{n\to\infty}\int_{\sigma_n<|x|<\rho_n}\nabla U_{jn}\nabla w_{Jn}+\partial_tU_{jn}\partial_tw_{Jn}(\theta_n,x)\,dx=0\label{eq:orth2}.
\end{eqnarray}
If for some $2\leq j\leq J$, $U_j^L\not\equiv 0$, by results in \cite{JiaLiuXu} (see remark at the end of proof of Lemma~4.6 in \cite{JiaLiuXu}) there exists some fixed $\epsilon>0$ such that 
\begin{equation}\label{eq:nonlinearchannel}
\int_{|x|\ge |t|}|\nabla U_{jn}|^2+(\partial_t U_{jn})^2(t,x)dx\ge \epsilon>0,
\end{equation}
for all $t\ge 0$ or all $t\leq 0$.
Thus by the orthogonality property of profiles (\ref{eq:orth1}),(\ref{eq:orth2}) we have
\begin{eqnarray*}
&&\inf_{t\ge 0}\int_{|x|\ge |t|}|\nabla u_n|^2+(\partial_t u_n)^2(t,x)dx+\inf_{t\leq 0}\int_{|x|\ge |t|}|\nabla u_n|^2+(\partial_t u_n)^2(t,x)dx\\
&&\ge\frac{1}{2}\inf_{t\ge0}\int_{|x|\ge |t|}|\nabla U_{jn}|^2+(\partial_t U_{jn})^2(t,x)dx+\frac{1}{2}\inf_{t\leq 0}\int_{|x|\ge |t|}|\nabla U_{jn}|^2+(\partial_t U_{jn})^2(t,x)dx\\
&&\ge \frac{\epsilon}{2}>0 
\end{eqnarray*}
for all $n$ sufficiently large. A contradiction with (\ref{eq:forcontradiction}). Thus we must have $U_j^L\equiv 0$ for $2\leq j\leq J$. The profile decompositions of $(u_{n0},u_{n1})$ then simplify to this form: 
\begin{equation}
(u_{0n},u_{1n})=\overrightarrow{U}^L_1(0)+\overrightarrow{w}_{n}(0).
\end{equation}
By the decomposition (\ref{eq:decomposition}), Theorem \ref{th:channelofenergy} and orthogonality property of profiles, using  the same arguments as above, we conclude that $\overrightarrow{U}^L_1(0)$ must be a steady state. Thus from the bound (\ref{eq:distance}) we have
\begin{equation}
\|\overrightarrow{w}_{n}(0)\|_{\dot{H}^1\times L^2(\R^3)}\ge\delta.
\end{equation}
By the channel of energy estimates for the linear wave equation, by the decomposition (\ref{eq:decomposition}), and the orthogonality property, we obtain
\begin{eqnarray*}
&&\inf_{t\ge 0}\int_{|x|\ge |t|}|\nabla u_n|^2+(\partial_t u_n)^2(t,x)dx+\inf_{t\leq 0}\int_{|x|\ge |t|}|\nabla u_n|^2+(\partial_t u_n)^2(t,x)dx\\
&&\ge\frac{1}{2}\inf_{t\ge0}\int_{|x|\ge |t|}|\nabla w_{n}|^2+(\partial_t w_{n})^2(t,x)dx+\frac{1}{2}\inf_{t\leq 0}\int_{|x|\ge |t|}|\nabla w_{n}|^2+(\partial_t w_{n})^2(t,x)dx\\
&&\ge \frac{1}{2}\delta>0, 
\end{eqnarray*}
 for all sufficiently large $n$. We thus again arrive at a contradiction with (\ref{eq:forcontradiction}). The theorem is proved.
\end{proof}

\section{Center Stable manifold of  unstable excited states}\label{sec:5}

In this section we show that any unstable excited state can only attract a finite co-dimensional manifold of solutions and finish the proof of our main Theorem \ref{th:maintheoremintro}. We first prove   Theorem~\ref{th:nongeneric} which provides the key estimate on the energy of the radiation term.

Before going into the technical details let us briefly outline the main ideas underlying the proof. By results in Section~\ref{sec:3}, 
 we know that if a solution $U(t)$ scatters to an unstable excited state $(\phi,0)$, then there exists a local finite co-dimensional center-stable  manifold around $\overrightarrow{U}(0)$, on which solutions scatter to the same excited state. We would like to show the following in a small neighborhood of $\overrightarrow{U}(0)$:  if the data $(u_0,u_1)$ does not lie on this local center-stable  manifold, then the solution $\OR{u}(t)$ with initial data $(u_0,u_1)$ will scatter to a steady state with {\em strictly less energy}, thus not to~$(\phi,0)$. Then it is  clear that the set of initial data in $\HL(\R^3)$ for which solutions scatter to an unstable excited state is a global finite co-dimensional manifold. 

Note that the local center-stable  manifold theorem \ref{th:localmanifold} guarantees that the solution $u(t)$ will exit a small ball centered at $\overrightarrow{U}(0)$. However after it exits the small ball, we will lose control on the solution based on perturbative analysis alone. Thus we need some global information about the future development of the solution $u(t)$. The key global information here is the channel of energy inequality. Roughly speaking we show by a channel of energy argument that $u(t)$ will emit at least some fixed amount of energy more than solution starting from the center-stable  manifold to spatial infinity, thus leaving $u(t)$ with less energy in a bounded region as $t\to\infty$ than is required for it  to scatter to $(\phi,0)$. This forces $u(t)$ to scatter to a different steady state from $(\phi,0)$. 
The precise result is as follows.

\begin{theorem}\label{th:nongeneric}
Let $V\in \Omega\subseteq Y$. Suppose that radial finite energy solution $\overrightarrow{U}(t)$ to equation (\ref{eq:mainequation}) scatters to an unstable excited state $(\phi,0)$. Let $\mathcal{M}$ be the local center-stable  manifold around $\overrightarrow{U}(0)$ and let $\epsilon_0,\,\epsilon_1$ be as defined in Theorem \ref{th:localmanifold}. Then there exist $\epsilon$ with $0<\epsilon<\epsilon_1<\epsilon_0$ and $\delta(\epsilon_1)\gg \epsilon $, such that for any solution $u$ with radial finite energy initial data $(u_0,u_1)\notin \mathcal{M}$ with $$\|(u_0,u_1)-\overrightarrow{U_0}\|_{\dot{H}^1\times L^2}<\epsilon,$$ we can find $L>0$ such that 
\begin{equation}\label{eq:emitmoreenergy}
\int_{|x|\ge t-L} \left[\frac{|\nabla u|^2}{2}+\frac{(\partial_tu)^2}{2}\right](t,x)\,dx\ge \mathcal{E}(\overrightarrow{U}(t))-\mathcal{E}((\phi,0))+\delta, \,\,{\rm for}\,\,t\ge L.
\end{equation}
Suppose $\OR{u}(t)$ scatters to $(\phi_{1},0)\in \Sigma$ (the set of steady states). Then 
\begin{equation}
\mathcal{E}((\phi_1,0))<\mathcal{E}((\phi,0)).
\end{equation}
\end{theorem}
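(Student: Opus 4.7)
The plan follows the heuristic sketched in the introduction (with off-manifold solution denoted $\OR{v}$ in place of the $\OR{u}$ of the theorem statement): a solution off the local center-stable manifold $\mathcal{M}$ must develop a spatially localized, exponentially growing interior perturbation which triggers a ``second radiation'' well separated from the ``old radiation'' $\OR{U}^L$, forcing $\OR{v}$ to scatter to a state of strictly lower energy than $(\phi,0)$.

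First, fix parameters. Since $\OR{U}$ scatters to $(\phi,0)$, and by local energy decay for radial free radiation, one can take $T_0$ so large that $\|\OR{U}(T_0)-(\phi,0)-\OR{U}^L(T_0)\|_{\HL}\ll\epsilon_1$ and the local energy of $\OR{U}^L(T_0)$ on a prescribed ball $B_{R_0}$ is arbitrarily small; hence $\OR{U}(T_0)\approx(\phi,0)$ on $B_{R_0}$. Well-posedness keeps $\|\OR{v}(t)-\OR{U}(t)\|_{\HL}<\eta\ll\epsilon_1$ on $[0,T_0]$ if $\epsilon$ is small enough (depending on $T_0$), so the uniqueness clause of Theorem~\ref{th:localmanifold} produces a first exit time $T_2>T_0$ with $\|\OR{v}(T_2)-\OR{U}(T_2)\|_{\HL}=\epsilon_1$. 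On $[T_0,T_2]$ the Step~2 perturbative analysis in the proof of Theorem~\ref{th:localmanifold} applies to $\OR{\eta}:=\OR{v}-\OR{U}$ via the decomposition $\OR{\eta}=\sum_i\tl_i(t)\rho_i+\tg(t)$. Since $(v_0,v_1)\notin\mathcal{M}$, the stability condition~(\ref{condition2}) fails, forcing the unstable coefficients $\tl_i(t)$ to grow as $e^{k_it}$ and dominate $\tg$, so $\sum_i|\tl_i(T_2)|\gtrsim\epsilon_1$ with $\dot{\tl}_i(T_2)\approx k_i\tl_i(T_2)$. Agmon exponential decay of $\rho_i$ localizes this piece in a fixed ball $B_{R_1}$ independent of $T_2$, giving $\|\OR{\eta}(T_2)\|_{\HL(B_{R_1})}\gtrsim\epsilon_1$ and a nontrivial forward-unstable projection of size $\sim\epsilon_1$.

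Introduce the auxiliary solution $\tilde v$ of~(\ref{eq:mainequation}) with data at time $T_2$ given by $\tilde v(T_2):=(\phi,0)+\chi\cdot\OR{\eta}(T_2)$, where $\chi$ is a smooth cutoff equal to $1$ on $B_{R_1}$ and supported in $B_{R_1+1}$. Then $\tilde v(T_2)-(\phi,0)$ is compactly supported with $\HL$-norm $\lesssim\epsilon_1$, distance to $\Sigma$ at least $\sim\epsilon_1$, and retains the forward-unstable projection of size $\sim\epsilon_1$. Applying the quantitative channel of energy (Theorem~\ref{th:quantitativechannel}) gives a one-sided lower bound $\int_{|x|\ge|t-T_2|}(|\nabla\tilde v|^2+(\partial_t\tilde v)^2)\,dx\ge c_2>0$. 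By finite speed of propagation, $\tilde v(t)=\phi$ in $|x|\ge R_1+1+|t-T_2|$, and combining the nontriviality of the forward-unstable projection with a further generic reduction of $\Omega$ making the finitely many steady-state energies distinct (in the spirit of the hyperbolicity refinement at the end of Section~\ref{sec:2}) excludes the backward direction and forces $\tilde v$ to scatter forward to some $(\phi^\sharp,0)\in\Sigma$, $\phi^\sharp\ne\phi$, with forward radiation
\[E(\tilde v^L_+)\;=\;\mathcal{E}(\tilde v)-\mathcal{E}((\phi^\sharp,0))\;\ge\;c_\ast/2,\]
where $c_\ast>0$ is the minimum energy gap between distinct steady states.

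Finally, transfer the bound to $v$. Since $\tilde v(T_2)-v(T_2)\approx-\OR{U}^L(T_2)$ is supported essentially in $|x|\gtrsim T_2/2$ (by local energy decay of $\OR{U}^L$ and the cutoff), finite speed of propagation yields $\tilde v(t)=v(t)$ in the annulus $|t-T_2|\le|x|\le R_1+1+|t-T_2|$ for $t\in[T_2,3T_2/2]$. Hence the second radiation of $\tilde v$, at radii $\sim t-T_2$, coincides with a piece of $v$, and is spatially disjoint from the ``old'' radiation of $v$ inherited from $\OR{U}^L$ near $|x|\sim t$. Summing the two disjoint pieces at a suitable moderate time, and using scattering of $v$ to some $(\phi_1,0)$ (Theorem~\ref{th:JLX}) to propagate the bound to all $t\ge L:=T_2+R_0$, one obtains
\[\int_{|x|\ge t-L}\tfrac{|\nabla v|^2+(\partial_t v)^2}{2}\,dx\;\ge\;\tfrac12\|\OR{U}^L\|^2_{\HL}+\tfrac{c_\ast}{4}\;=\;\mathcal{E}(\OR{U}(t))-\mathcal{E}((\phi,0))+\delta,\]
with $\delta:=c_\ast/4$. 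Letting $t\to\infty$, together with $\mathcal{E}(\OR{v})=\mathcal{E}((\phi_1,0))+\tfrac12\|\OR{v}^L\|^2_{\HL}$ and $\mathcal{E}(\OR{v})=\mathcal{E}(\OR{U})+O(\epsilon^2)$, yields the strict inequality $\mathcal{E}((\phi_1,0))<\mathcal{E}((\phi,0))$. The main obstacle is the third step: showing that $\tilde v$ genuinely scatters forward to a strictly lower-energy state rather than returning to $\phi$, which requires both the preserved forward-unstable projection of $\chi\OR{\eta}(T_2)$ and a genericity assumption ensuring distinct steady-state energies so that a uniform energy gap $c_\ast>0$ is available.
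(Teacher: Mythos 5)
Your high-level plan — use the channel of energy after the first exit time $T_2$ to show the off-manifold solution emits a second, spatially disjoint burst of radiation, and hence lands on a lower-energy steady state — is the right idea and matches the paper's strategy. But the argument as written has a genuine gap at the decisive step, together with some unnecessary and unproved structure.

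The decisive gap is your treatment of the time direction in the channel of energy inequality. Theorem~\ref{th:quantitativechannel} gives the exterior-energy lower bound for \emph{all} $t\ge T_2$ \emph{or} all $t\le T_2$, and you must rule out the latter. You propose to do this via the ``nontrivial forward-unstable projection'' of $\tilde v(T_2)$ together with an additional genericity hypothesis. That does not work: a nontrivial unstable projection only says $\tilde v$ exits a small ball around $(\phi,0)$ in forward time; it says nothing about which side of the channel holds, and nothing forbids $\tilde v$ from re-entering and scattering forward to $(\phi,0)$ anyway --- ruling this out is precisely what Theorem~\ref{th:nongeneric} is for, so the reasoning is essentially circular. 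The paper's argument is different and non-circular: take $\OR{w}=\OR{u}(T_2)-(\phi,0)-\OR{U}^L(T_2)$ (no cutoff), set $\OR{\tilde u}(T_2)=(\phi,0)+\OR{w}$, and use the perturbation result Lemma~\ref{lm:perturbationresult} together with the Strichartz smallness (\ref{eq:smallfreewave}) of $U^L$ away from the light cone $D$ to obtain the decomposition $\OR{u}=\OR{U}^L+\OR{\tilde u}+\OR{r}$ on $\{|x|\ge |t-T_2|\}$ with $\sup_t\|\OR{r}(t)\|_{\HL}\le C\delta_1$. If the channel held backward, evaluating this decomposition at $t=0$ and using that $\|\OR{U}^L(0)\|_{\HL(|x|>T_2)}$ is small for $T_2$ large would give $\|(u_0,u_1)\|_{\HL(|x|\ge T_2)}\ge\tfrac12 c(\epsilon_1)$, which is incompatible with $(u_0,u_1)$ being $\epsilon$-close to the fixed finite-energy $\OR{U}(0)$ once $T_2$ (which grows as $\epsilon\to 0$) is large. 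The same decomposition, combined with (\ref{eq:mostlyexterior}), then yields (\ref{eq:emitmoreenergy}) directly for all $t\ge T_2$; there is no need to invoke Theorem~\ref{th:JLX} to ``propagate'' a bound established at one moderate time, which in any case would not follow since exterior energy is not monotone increasing.

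Two further points. First, your step extracting $\sum_i|\tl_i(T_2)|\gtrsim\epsilon_1$ with $\dot{\tl}_i(T_2)\approx k_i\tl_i(T_2)$, and the resulting localization of $\OR{\eta}(T_2)$ in a fixed ball, are not consequences of Theorem~\ref{th:localmanifold} as stated (the exit could, a priori, be carried in part by the non-localized dispersive component $\tg$), and none of this structure is actually needed once you use the decomposition above. Second, the extra genericity assumption of pairwise distinct steady-state energies and the uniform gap $c_*$ are not required: the $\delta(\epsilon_1)$ in (\ref{eq:emitmoreenergy}) comes from the channel-of-energy constant $c(\epsilon_1)$, not from an energy gap. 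Once (\ref{eq:emitmoreenergy}) is established, letting $t\to\infty$ and using $\mathcal{E}(\OR{u})=\mathcal{E}(\OR{U})+O(\epsilon)$ with $\epsilon\ll\delta(\epsilon_1)$ gives $\mathcal{E}(\phi_1,0)<\mathcal{E}(\phi,0)$.
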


\begin{proof}By the local center-stable  manifold theorem of Section~\ref{sec:3}, the locally defined finite co-dimensional manifold $\mathcal{M}$ satisfies the property that any solution to equation (\ref{eq:mainequation}) with initial data on $\mathcal{M}$ scatters to $(\phi,0)$. Moreover, if a solution $\overrightarrow{u}(t)$ with initial data $(u_0,u_1)\in B_{\epsilon_1}(\overrightarrow{U}_0)$ satisfies 
\begin{equation}
\|\overrightarrow{u}(t)-\overrightarrow{U}(t)\|_{\dot{H}^1\times L^2}<\epsilon_1 \,\,{\rm for\,\,all\,\,}t\ge 0,
\end{equation}
then $(u_0,u_1)\in \mathcal{M}$. By shrinking $\epsilon_1$ if necessary we can assume that the distance from any other steady state to $(\phi,0)$ is greater than $3\epsilon_1$. Take $\epsilon<\epsilon_1$ sufficiently small to be chosen below. Since solution $\overrightarrow{U}(t)$ scatters to $(\phi,0)$ as $t\to\infty$, denoting  by $\overrightarrow{U}^L$  the scattered linear wave, we have the property that
\begin{equation}\label{eq:convergencewithradiation}
\lim_{t\to\infty}\|\overrightarrow{U}(t)-\overrightarrow{U}^L(t)-(\phi,0)\|_{\dot{H}^1\times L^2}=0.
\end{equation}
By (\ref{eq:convergencewithradiation}), the fact that $\phi\in\dot{H}^1(\R^3)$ and $U^L\in L^5_tL^{10}_x( [0,\infty)\times \R^3)$, for any small $\delta_1>0$, we can choose $L>0$ and $T_1>L$ sufficiently large such that for $t\ge T_1$ we have: 
\begin{itemize}
\item (Closeness of $\OR{U}$ to $\OR{U}^L+(\phi,0)$ and choice of the bounded region)
\begin{equation}\label{eq:decomp}
\|\overrightarrow{U}(t)-\overrightarrow{U}^L(t)-(\phi,0)\|_{\dot{H}^1\times L^2}+\|\phi\|_{\dot{H}^1(|x|\ge L)}\leq \delta_1;
\end{equation}
\item (Most energy of the free radiation is exterior)
\begin{equation}\label{eq:mostlyexterior}
\int_{|x|\ge t-T_1+L}|\nabla_{x,t}U^L|^2(t,x)\,dx\ge \int_{\R^3}|\nabla_{t,x}U^L|^2(t,x)\,dx-\delta_1;
\end{equation}
\item (Control on the Strichartz norm of the radiation)
Let $$D:=\{(x,t):\,|x|\leq T_1+L-t,\,0\leq t\leq T_1\}.$$ Then we have
\begin{equation}\label{eq:smallfreewave}
\|U^L\|_{L^5_tL^{10}_x(  (0,\infty)\times \R^3\backslash D)}<\delta_1.
\end{equation}
\end{itemize}
We remark that (\ref{eq:mostlyexterior}) ensures that $U^L$ can essentially be taken as zero for our purposes inside the region $|x|\leq t-T_1+L$ for $t\geq T_1$, which will be important to keep in mind
later, in order to  distinguish the second piece of radiation. 

\smallskip 

By the continuous dependence of the solution to equation (\ref{eq:mainequation}) with respect to the initial data in $\HL(\R^3)$ and by finite speed of propagation,  if we take $\epsilon$ sufficiently small and radial initial data $(u_0,u_1)\in\HL\backslash\mathcal{M}$ with $\|(u_0,u_1)-\OR{U}(0)\|_{\HL}<\epsilon$,  then
\begin{equation}
\|\OR{u}(T_1)-\OR{U}(T_1)\|_{\HL}
\end{equation}
can be made sufficiently small. Hence, noting that $\|V\|_{L^{5/4}_tL^{5/2}_x(|x|\ge |t|)}$ is finite, we can apply Lemma \ref{lm:perturbationresult} to conclude that 
\begin{equation}\label{eq:veryclose}
\|\overrightarrow{u}(t)-\OR{U}(t)\|_{\HL(|x|\ge t-T_1)}\leq \delta_1, \,\,{\rm for\,\,all\,\,}t\ge T_1.
\end{equation}
(\ref{eq:veryclose}) means that we can effectively identify $\OR{u}$ with $\OR{U}$ in the exterior region $$|x|\ge t-T_1,\,t\ge T_1.$$ Hence by (\ref{eq:decomp}), we see
\begin{equation}\label{eq:appbound}
\|\OR{u}(t)-\OR{U}^L(t)\|_{\HL(|x|\ge t-T_1+L)}\leq 3\delta_1,
\end{equation}
that is, we can also identify $\OR{u}$ with $\OR{U}^L$ in the exterior region $|x|\ge t-T_1+L,\,t\ge T_1$. 

\smallskip

At this point to avoid any possibility of confusion due to the many parameters, we remark that $\delta_1$ and $\epsilon$ can be made as small as we wish, with $T_1,\,L$ depending on $\delta_1$ and $\OR{U}$ only. $\epsilon$ is a small free parameter below some threshold determined by $\delta_1$. The key point for us is that $\epsilon_1>0$ is fixed no matter how small $\epsilon$ is chosen. 

\smallskip

Since $(u_0,u_1)\not\in \mathcal{M}$, there exists $T_2>0$ such that 
\begin{equation}\label{eq:expelled}
\|\OR{u}(T_2)-\OR{U}(T_2)\|_{\HL(\R^3)}= \epsilon_1.
\end{equation}
Note that the choice of $T_1$ and $L$ does not depend on $\epsilon$, thus by the continuous dependence of solution on initial data in $\dot{H}^1\times L^2$, if we choose $\epsilon$ sufficiently small, we can assume $T_2>L+T_1+1$.
 Let us consider the data $\OR{u}(T_2)$ in more detail.
By estimates (\ref{eq:decomp}) and (\ref{eq:expelled}) we can write
\begin{equation}
\OR{u}(T_2)=(\phi,0)+\OR{U}^L(T_2)+\OR{w},
\end{equation}
where $\OR{w}\in\HL$ satisfies
\begin{equation}
2\epsilon_1\ge\epsilon_1+\delta_1\ge\|\OR{w}\|_{\HL(\R^3)}\ge \epsilon_1-\delta_1\ge\epsilon_1/2,
\end{equation}
if $\delta_1$ is chosen smaller than $\frac{\epsilon_1}{2}$. Consider the solution $\tilde{u}(t)$ to equation (\ref{eq:mainequation}) with $$\OR{\tilde{u}}(T_2)=(\phi,0)+\OR{w}.$$ Then by the quantitative channel of energy inequality from Theorem \ref{th:quantitativechannel}, we infer that 
\begin{equation}\label{eq:anotherchannel}
\int_{|x|\ge |t-T_2|}|\nabla_{t,x} \tilde{u}|^2(t,x)dx\ge c(\epsilon_1)>0, \,\,\,{\rm for\,\,all\,\,}t\ge T_2\,\,\,{\rm or\,\,all\,\,}t\leq T_2.
\end{equation} 
By bound (\ref{eq:smallfreewave}) and Lemma \ref{lm:perturbationresult} we obtain that  for $|x|\ge |t-T_2|$
\begin{equation}\label{eq:anotherdecomp}
\OR{u}(t,x)=\OR{U}^L(t,x)+\OR{\tilde{u}}(t,x)+\OR{r}(t,x),
\end{equation}
where the remainder term $\OR{r}$ satisfies
\begin{equation}\label{eq:errorsmall}
\sup_{t\in \R}\|\OR{r}(t)\|_{\HL(\R^3)}\leq C\delta_1.
\end{equation}
We claim that the channel of energy inequality (\ref{eq:anotherchannel}) holds for all $t\ge T_2$. Otherwise, inequality (\ref{eq:anotherchannel}) holds for all $t\leq T_2$. By (\ref{eq:anotherdecomp}) and (\ref{eq:errorsmall}), setting $t=0$ and noting that $\|U^L\|_{\hl(|x|>T_2)}$ can be made smaller than $\delta$ if $T_2$ is large enough, we see that 
\begin{equation}
\|(u_0,u_1)\|_{\HL(|x|\ge T_2)}\ge c(\epsilon_1)-C\delta_1>\frac12c(\epsilon_1) ,
\end{equation}
if $\delta_1$ is chosen sufficiently small, a contradiction with finiteness of $\overrightarrow{U}(0)$ in $\HL$  and $\|(u_0,u_1) - \overrightarrow{U}(0)\|_{\hl}<\epsilon$ for $T_2$ large. Thus we have the following channel of energy inequality
\begin{equation}
\int_{|x|\ge t-T_2}|\nabla_{t,x} \tilde{u}|^2(t,x)\,dx\ge c(\epsilon_1)>0, \,\,{\rm for\,\,all\,\,}t\ge T_2.
\end{equation}
The estimate (\ref{eq:appbound}), decomposition (\ref{eq:anotherdecomp}) and estimate on the remainder term (\ref{eq:errorsmall}) imply for $t\ge T_2$
\begin{equation}
\int_{|x|\ge t-T_1+L}|\nabla_{t,x}\tilde{u}|^2(t,x)\, dx\leq C\delta_1,
\end{equation}
and consequently
\begin{equation}
\int_{t-T_1+L\ge |x|\ge t-T_2}|\nabla_{t,x}\tilde{u}|^2(t,x)\, dx\ge c(\epsilon_1)-C\delta_1.
\end{equation}
Hence
\begin{eqnarray*}
&&\|\OR{u}\|^2_{\HL(|x|\ge t-T_2)}\\
&&\ge \|\OR{U}^L\|^2_{\HL(|x|\ge t-T_1+L)}+c(\epsilon_1)-C\delta_1\\
&&\ge \|\OR{U}^L\|^2_{\HL(\R^3)}+c(\epsilon_1)-C\delta_1,
\end{eqnarray*}
if $\delta_1$ is chosen sufficiently small. Choose $\delta_1$ sufficiently small depending on $\epsilon_1$, (\ref{eq:emitmoreenergy}) is then proved with some $\delta=\delta(\epsilon_1)>0$.

\smallskip

\noindent

Now let $\OR{u}^L(t)$ be the free radiation term for the solution $\OR{u}(t)$ as $t\to\infty$. Then
\begin{equation}
\lim_{t\to\infty}\|\OR{u}(t)-\OR{u}^L(t)-(\phi_{1},0)\|_{\HL(\R^3 )}=0.
\end{equation}
By the first part of Theorem \ref{th:nongeneric},
\begin{equation}\label{eq:moreradiatedenergy}
\|\OR{u}^L\|^2_{\HL(\R^3)}\ge \lim_{t\rightarrow \infty }\|\OR{u}\|^2_{\HL(|x|\ge t-T_2)}\ge \|\OR{U}^L\|^2_{\HL(\R^3)}+c(\epsilon_1).
\end{equation}
Note that
\begin{eqnarray*}
&&\|\OR{U}^L\|^2_{\HL}=\mathcal{E}(\OR{U})-\mathcal{E}(\phi,0);\\
&&\|\OR{u}^L\|^2_{\HL}=\mathcal{E}(\OR{u})-\mathcal{E}(\phi_1,0).
\end{eqnarray*}
If $\|\OR{u}(0)-\OR{U}(0)\|_{\HL(\R^3)}$ is chosen small, by (\ref{eq:moreradiatedenergy}) we conclude
\begin{equation}
\mathcal{E}(\phi_1,0)<\mathcal{E}(\phi,0).
\end{equation}
The theorem is proved.\end{proof}

\begin{proof}[Proof of Theorem \ref{th:maintheoremintro}.]
We only consider the case in which $(\phi,0)$ is unstable, the case of $(\phi,0)$ being stable can be handled using standard perturbation arguments. 
Since in some small neighborhood of any point $\OR{U}(0)$ on $\mathcal{M}_{\phi}$, $\mathcal{M}_{\phi}$ coincides with the local center-stable  manifold $\mathcal{M}$ of codimension $n$ which we constructed in Section~\ref{sec:3} by Theorem \ref{th:nongeneric}, $\mathcal{M}_{\phi}$ is thus a global manifold of co-dimension~$n$. The path-connectedness follows from the following theorem. 
\end{proof}

 \begin{theorem} For any unstable excited state $(\phi, 0)$, the corresponding center-stable  manifold $\mathcal{M}_{\phi}$ is path connected. 
 \end{theorem}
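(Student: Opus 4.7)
The plan is to show that every $(u_0,u_1)\in\mathcal{M}_\phi$ lies in the path component of $(\phi,0)$, which suffices since $\mathcal{M}_\phi$ is locally path connected by Theorem~\ref{th:maintheoremintro}. Given such a point, let $\OR{U}(t)=\OR{S}(t)(u_0,u_1)$ be the corresponding solution, scattering to $(\phi,0)$ with free radiation $\OR{U}^L(t)$ as in \eqref{scatter-vp}. I will construct a continuous one-parameter family of radial finite energy solutions $\{\OR{U}_\mu\}_{\mu\in[0,1]}$ to \eqref{eq:mainequation}, each scattering to $(\phi,0)$, with $\OR{U}_0\equiv(\phi,0)$ and $\OR{U}_1=\OR{U}$. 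The continuous map $\mu\mapsto\OR{U}_\mu(0)$ then provides a path in $\mathcal{M}_\phi$ from $(\phi,0)$ to $(u_0,u_1)$.

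For each $\mu\in[0,1]$ and a fixed large time $T$, I seek $U_\mu$ in the form $U_\mu=\phi+\mu U^L+\eta_\mu$. Using that $\phi$ is stationary and $U^L$ is a free wave, a direct computation yields
\begin{equation*}
\partial_{tt}\eta_\mu+\mathcal{L}_\phi\eta_\mu=\mu(V-5\phi^4)U^L-N_0(\phi,\mu U^L,\eta_\mu),
\end{equation*}
where $N_0:=(\phi+\mu U^L+\eta_\mu)^5-\phi^5-5\phi^4(\mu U^L+\eta_\mu)$ collects the quadratic-and-higher terms. I impose the asymptotic condition $\OR{\eta}_\mu(t)\to 0$ in $\hl$ as $t\to\infty$, together with the stability conditions of the form \eqref{condition2} relating $\dot{\tilde{\lambda}}_{i,\mu}(T)$ to $\tilde{\lambda}_{i,\mu}(T)$ and the nonlinear terms. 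Prescribing the initial $X_s$ parameters at time $T$ as $\mu$ times the $X_s$ components of $\OR{U}(T)-(\phi,0)-\OR{U}^L(T)$ --- linearly interpolating between $0$ at $\mu=0$ and the actual $X_s$ values of $\OR{U}$ at $\mu=1$ --- I then adapt the contraction-mapping scheme from Step 2 of the proof of Theorem~\ref{th:localmanifold}, with the approximate reference $\phi+\mu U^L$ replacing the exact reference $U$, to produce a unique small $\eta_\mu$ in the norm $\|\cdot\|_X$ of \eqref{define-X}.

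The two uniform-in-$\mu$ bounds driving the contraction are: (i) $\|U^L\|_{\st{2}{\infty}([T,\infty)\times\R^3)}\to 0$ as $T\to\infty$, which follows by density from the pointwise $1/t$ decay of compactly supported radial free waves via the strong Huygens' principle; and (ii) $\|(V-5\phi^4)U^L\|_{\st{1}{2}([T,\infty)\times\R^3)}\to 0$ as $T\to\infty$, which uses the spatial decay $|V|+|\phi|^4\lesssim(1+|x|)^{-\min(\beta,4)}$ together with the concentration of $U^L(t,\cdot)$ near the light cone $|x|\sim t$. Since these bounds are $\mu$-independent, the contraction constants can be chosen uniformly in $\mu$, so the fixed point $\eta_\mu$ is Lipschitz in $\mu$, and hence $\mu\mapsto\OR{U}_\mu(T):=(\phi,0)+\mu\OR{U}^L(T)+\OR{\eta}_\mu(T)$ is continuous into $\RHL$. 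Composing with the continuous flow $\OR{S}(-T)$ yields continuity of $\mu\mapsto\OR{U}_\mu(0)$. The endpoints are identified as follows: at $\mu=0$ the source and $X_s$ data both vanish, so uniqueness of the small fixed point gives $\eta_0\equiv 0$ and $\OR{U}_0\equiv(\phi,0)$; at $\mu=1$, the function $U-\phi-U^L$ satisfies the same fixed-point equation with the prescribed $X_s$ data (and by \eqref{scatter-vp} its $\hl$-norm is less than $\epsilon_0$ at $T$ for $T$ large), so uniqueness of the contraction forces $\OR{U}_1=\OR{U}$.

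The main technical obstacle is bound (ii): uniformly in $\mu\in[0,1]$ establishing the $\st{1}{2}$ smallness of the source $\mu(V-5\phi^4)U^L$ on $[T,\infty)\times\R^3$. This requires careful use of the radial structure --- specifically the light-cone concentration of radial free radiations --- together with the standing hypothesis $\beta>2$ on the decay of the potential $V$.
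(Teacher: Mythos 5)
Your strategy differs genuinely from the paper's. The paper connects any two points $(u_0,u_1),(\tilde u_0,\tilde u_1)\in\mathcal{M}_\phi$ directly by seeking $w(\theta,\cdot)=(1-\theta)u+\theta\tilde u+\eta$; writing $h=u-\phi$, $\ell=\tilde u-\phi$, the $\eta$--independent part of the forcing is $\sum_{i+j+k=5,\,i\le 3}C(\theta,i,j,k)\phi^i h^j\ell^k$, which contains no $\phi^5$, $\phi^4 h$, or $\phi^4\ell$ term and is therefore at least quadratic in $(h,\ell)$, hence $O(\epsilon^2)$ in $\st{1}{2}$ once $T$ is large. You instead connect each point to the common base point $(\phi,0)$ via the homotopy $U_\mu=\phi+\mu U^L+\eta_\mu$, where the reference $\phi+\mu U^L$ is \emph{not} an exact solution and produces the inhomogeneous source $\mu(V-5\phi^4)U^L$. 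Your endpoint identifications ($\eta_0\equiv0$ and $\eta_1=h-U^L$, checked against the fixed-point equation and uniqueness) are correct, and your remark about local path-connectedness is unnecessary and slightly circular (it is part of the conclusion of Theorem~\ref{th:maintheoremintro}), but harmless: showing every point lies in the path component of $(\phi,0)$ already gives path-connectedness.

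The genuine gap is your bound~(ii): the assertion that $\|(V-5\phi^4)U^L\|_{\st{1}{2}([T,\infty)\times\R^3)}\to 0$ as $T\to\infty$ is false for general finite-energy radial free waves $U^L$. Writing $rU^L(t,r)=F(t+r)-F(t-r)$ with $F'\in L^2$, Minkowski's inequality gives
$\|(V-5\phi^4)U^L(t)\|_{L^2_x}\lesssim \bigl(|F'|^2 * K\bigr)^{1/2}(t)$
with a fixed kernel $K(\tau)\lesssim(1+|\tau|)^{2-2\beta'}$, $\beta'=\min(\beta,4)>2$. If, for instance, $|F'(s)|^2\sim s^{-1}(\log s)^{-2}$ for large $s$ (which is $L^1$, hence admissible finite energy data), then $(|F'|^2*K)^{1/2}(t)\sim t^{-1/2}(\log t)^{-1}$, which is \emph{not} in $L^1_t([T,\infty))$ for any $T$. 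The light-cone concentration you invoke holds pointwise with a genuine rate only for compactly supported data; the slowly decaying energy tail of a general radial free wave, weighted by $V$, does not integrate in $t$. This is exactly the difficulty the paper handles in the proof of the lemma following Claim~\ref{Claim}: one approximates $U^L$ by a compactly supported radial free wave $\widetilde U^L$ with $\|\OR{U}^L(0)-\OR{\widetilde U}^L(0)\|_{\hl}\le\delta$, so that $(V-5\phi^4)\widetilde U^L$ \emph{is} small in $\st{1}{2}([T,\infty))$ for $T$ large, and absorbs the small-energy remainder $U^L-\widetilde U^L$ through Strichartz estimates for the evolution rather than through the inhomogeneity. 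To repair your argument you should replace the reference $\phi+\mu U^L$ by $\phi+\mu\widetilde U^L$, i.e.\ set $U_\mu=\phi+\mu\widetilde U^L+\tilde\eta_\mu$ with $\tilde\eta_\mu=\eta_\mu+\mu(U^L-\widetilde U^L)$, prescribe $X_s$ data for $\tilde\eta_\mu(T)$ accordingly, and run the contraction with the compactly supported source. This fixes the gap but is not the minor technicality you describe; without it the contraction cannot be closed.
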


\begin{proof}
Given data $(u_0, u_1), (\tilde{u}_0,\tilde{u}_1)\in \mathcal{M}_{\vp}$, we denote the corresponding  solutions by $u, \tilde{u}$. Write $h=u-\vp, \ell=\tilde{u}-\vp$. 
Repeat step~0 and step~1  in  the proof of Theorem~\ref{th:localmanifold}. Then given any $\epsilon \ll 1$, we can find $T=T(\epsilon, u, \tilde{u})$, such that 
\begin{equation}\|h\|_{L^2_tL^\infty_x\cap L^\infty_tL^6_x(  [T,\infty)\times \R^3)}, \hspace{0.1cm}\|  \ell \|_{L^2_tL^\infty_x\cap L^\infty_tL^6_x(  [T,\infty)\times \R^3)}\leq \epsilon \label{h-l-small}.\end{equation}  
Now we seek a function $w(\theta, t, x)$ of the form 
\begin{align}w(\theta, t, x) &=(1-\theta) u +\theta\tilde{u}+\eta \notag \\ &= \vp + (1-\theta) h +\theta \ell +  \sum_{i=1}^n  \lambda_i(\theta, t)\rho_i +\gamma(\theta, t,x) \label{w-formula}\end{align}
such that  for all $\theta\in [0,1]$, $\gamma(\theta, t,x)\perp \rho_i, i=1,\ldots, n$ and  $w(\theta,t,x)$ is a solution to equation (\ref{eq:mainequation}) that scatters to $\vp$. 

For $\theta\in[0,1]$ fixed,   the equation satisfied by $\eta = \sum_{i=1}^n  \lambda_i(\theta, t)\rho_i +\gamma(\theta, t,x)$ is: 
\[\eta_{tt}-\Delta\eta -V(x)\eta + 5\vp^4\eta + N(\theta,h,\ell,\vp,\eta)=0,\]
where 
\[N(\theta,h,\ell,\vp,\eta) = (\vp + (1-\theta) h +\theta \ell +\eta )^5 -(1-\theta) (\vp+h)^5 -\theta (\vp +\ell)^5 - 5\vp^4 \eta.\]
Now  we  can repeat the stability condition (\ref{condition2}) and obtain  the reduced system of the form (\ref{system-2}).

In $N(\theta,h,\ell,\vp,\eta)$, the terms independent of $\eta$ are of the form 
\begin{align*}(\vp + (1-\theta) h +\theta \ell   )^5 -(1-\theta) (\vp+h)^5 -\theta (\vp +\ell)^5
=\sum_{i+j+k=5, i\leq 3} C(\theta, i, j, k) \vp^i h^j \ell^k. 
 \end{align*}
Notice that there are no  terms  $\vp^5$ or $\vp^4 h,  \phi^4\ell$. 
 
 Also, the linear term of $\eta$ in  $N(\theta,h,\ell,\vp,\eta)$ is
 \[5(\vp + (1-\theta) h +\theta \ell )^4 \eta   - 5\vp^4 \eta\]
hence  all linear terms involve a factor of $h$ or $\ell$.

Now we can repeat estimates (\ref{tilde-lambda})(\ref{tilde-gamma}),  then (\ref{N1-estimate}) for the linear term in $\eta$, (\ref{N2-estimate}) for higher order terms in $\eta$.  
We also have  the following estimate on terms independent of $\eta$
\[\left\|\sum_{i+j+k=5, i\leq 3} C(\theta, i, j, k)  \vp^i h^j \ell^k\right\|_{\st{1}{2}( [T,\infty)\times \R^3)}\lesssim \epsilon^2  \] 
For example, one checks that  $$ \|  \vp^3 h^2\|_{\st{1}{2}( [T,\infty)\times \R^3)}\leq \|\phi\|_{L^6}^3\|h\|^2_{L^{2}_tL_x^{\infty}}\leq 
 \epsilon^2$$ using (\ref{h-l-small}).  To sum up, using the $X$ norm defined in (\ref{define-X}), we conclude that 
 \begin{align*}\|(\lambda_1,\cdots, \lambda_n, \gamma)\|_{X([T,\infty))}\leq & \, K\epsilon^2 +  K\left( \sum_{i=1}^n |\lambda_i(\theta, T)| + \|(\gamma(\theta, T), \dot{\gamma}(\theta,T))\|_{\hl} \right)
\\ &+ K \epsilon \|(\lambda_1,\cdots, \lambda_n, \gamma)\|_{X([T,\infty))} + K \sum_{k=2}^5\|(\lambda_1,\cdots, \lambda_n, \gamma)\|_{X([T,\infty))}^k,\end{align*}
where $K$ is some absolute constant.  

Moreover, in a similar fashion one sees that the difference of two solutions satisfies a similar estimate in which   the first two terms disappear.  Using the contraction mapping principle, we   conclude that for  sufficiently small  data
\[ \sum_{i=1}^n |\lambda_i(\theta,T)| + \|(\gamma(\theta,T), \dot{\gamma}(\theta, T))\|_{\hl}\leq \delta\]
there is  a solution  $w$ as in (\ref{w-formula})   which solves (\ref{eq:mainequation}). We can also  check that $w$ scatters to $\phi$.
 
In particular, let us take 
$\lambda_i(\theta,T)=\frac{1}{n}\delta \theta(1-\theta)$ and $\vec{\gamma}(\theta,T,x)=\vec{0}$. We claim that the corresponding solution $w(\theta, t, x)$  satisfies the following relation
\begin{equation} w(0,t,x)=u(t,x),\quad w(1,t,x)=\tilde{u}(t,x),\quad \text{ for all } t\in \R. \label{w-equal}\end{equation}
In fact, notice that $\lambda_i(0,T)=0, \vec{\gamma}(0,T,x)=\vec{0}$ implies $\lambda_i(0,t)=0, \vec{\gamma}(0,t,x)=\vec{0}$ for $t\geq T$, which further implies $w(0,t,x)=u(t,x), t\geq T$. Similarly we have $w(1,t,x)=\tilde{u}(t,x), t\geq T$. Then (\ref{w-equal}) follows from the  uniqueness of solutions to equation (\ref{eq:mainequation}).

Hence $\{\vec{w}(\theta, 0, x), \theta\in [0,1]\}$ is a path in $\mathcal{M}_{\phi}$ connecting the two data $(u_0, u_1), (\tilde{u}_0, \tilde{u}_1)$.
\end{proof}

\begin{section}{Appendix A: Some elliptic estimates.}

We begin with the following lemma.

\begin{lemma}\label{lm:ellipticlemma}
Denote by $B_1$  the ball of radius $1$ in $\R^3$. Let $V\in L^{\infty}(B_1)$ and $\lambda>0$. Suppose $u\in \dot{H}^1\cap L^6(B_1)$ is a weak solution to
\begin{equation}\label{eq:ellipticapp}
-\Delta u-Vu+\lambda u^5=0,\quad {\rm in}\,\,B_1,
\end{equation}
in the sense of distributions. Then $u\in L^{\infty}(B_{\frac{1}{2}})$ and
\begin{equation}
\|u\|_{L^{\infty}(B_{\frac{1}{2}})}\leq C(\|V\|_{L^{\infty}(B_1)})\|u\|_{L^2(B_1)}.
\end{equation}
\end{lemma}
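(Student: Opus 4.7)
The plan is to exploit the defocusing sign of the nonlinearity to reduce the statement to a purely linear elliptic estimate for a subsolution, and then run the standard Moser iteration. Since $u\in\dot{H}^1\cap L^6(B_1)$, the right hand side of
\begin{equation*}
\Delta u = -Vu + \lambda u^5
\end{equation*}
lies in $L^{6/5}(B_1)$, so Kato's inequality applies in the distributional sense and gives
\begin{equation*}
\Delta|u|\ \geq\ \mathrm{sign}(u)\,\Delta u\ =\ -V|u|+\lambda|u|^5\ \geq\ -\|V\|_{L^\infty(B_1)}|u|,
\end{equation*}
where in the last step the favorable term $\lambda|u|^5\geq 0$ is simply discarded. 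Thus $w:=|u|\in\dot{H}^1\cap L^6(B_1)$ is a non-negative weak subsolution of the linear inequality $-\Delta w\leq \|V\|_{\infty}\, w$ in $B_1$. The crucial point of this reduction is that the nonlinearity disappears without a trace, which is why the final estimate depends only on $\|V\|_\infty$ and not on $\|u\|_{L^6}$.

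Next I would apply Moser iteration to $w$. For $1/2\leq r<R\leq 1$, pick a smooth cutoff $\eta$ with $\mathrm{supp}\,\eta\subset B_R$, $\eta\equiv 1$ on $B_r$, and $|\nabla\eta|\lesssim (R-r)^{-1}$. Testing the subsolution inequality against $\eta^2 w^{p-1}$ for $p\geq 2$ (after first truncating $w$ by $\min(w,M)$ to justify the test function, and sending $M\to\infty$ by monotone convergence) and absorbing the cross term with Cauchy--Schwarz/Young produces the Caccioppoli-type bound
\begin{equation*}
\int |\nabla(\eta\, w^{p/2})|^2\,dx\ \leq\ C\bigl(p,\|V\|_\infty,(R-r)^{-2}\bigr)\int_{B_R}w^p\,dx.
\end{equation*}
Combining with the Sobolev embedding $\dot{H}^1(\R^3)\hookrightarrow L^6(\R^3)$ yields the iteration inequality
\begin{equation*}
\|w\|_{L^{3p}(B_r)}\ \leq\ \bigl[C(p,\|V\|_\infty,(R-r)^{-2})\bigr]^{1/p}\|w\|_{L^p(B_R)}.
\end{equation*}
Iterating with $p_k=2\cdot 3^k$ and the geometrically shrinking sequence $r_k=\tfrac12+2^{-k-1}$, the exponents $1/p_k$ decay geometrically so that the logarithms of the stagewise constants are summable; passing to $k\to\infty$ gives
\begin{equation*}
\|u\|_{L^\infty(B_{1/2})}=\|w\|_{L^\infty(B_{1/2})}\ \leq\ C(\|V\|_{L^\infty(B_1)})\,\|u\|_{L^2(B_1)},
\end{equation*}
as required.

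The only nontrivial step is really the Kato reduction; after that the argument is classical and may alternatively be quoted from, e.g., Gilbarg--Trudinger Theorem 8.17, applied to subsolutions of $-\Delta w\leq cw$. If one prefers not to invoke Kato's inequality directly at this level of regularity, a self-contained alternative is to replace $|u|$ by $(u^2+\varepsilon)^{1/2}$, derive the analogous subsolution inequality with an $\varepsilon$-perturbation by a direct computation using $u\in\dot{H}^1$, and send $\varepsilon\to 0$ at the end. Either way, the bookkeeping for the Moser iteration is routine.
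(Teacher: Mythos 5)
Your proof is correct, and it takes a genuinely different (and somewhat cleaner) route than the paper. The paper runs a De~Giorgi level-set iteration directly on the nonlinear equation, testing against $(u-c_k)_+\eta_k$ over a nested family of balls $B_{r_k}$ with truncation levels $c_k\nearrow M$; the defocusing term $\int u^5(u-c_k)_+\eta_k\,dx\ge 0$ is discarded at that stage because of its favorable sign, which is exactly the same structural observation you make. You instead package that sign information once and for all via Kato's inequality, reducing $w=|u|$ to a non-negative subsolution of the \emph{linear} inequality $-\Delta w\le\|V\|_{\infty}w$, after which the conclusion is a standard local-boundedness estimate (Moser iteration, or simply Gilbarg--Trudinger Thm.\ 8.17). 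The Kato reduction buys you conceptual economy and lets you quote off-the-shelf linear theory, whereas the paper's direct De~Giorgi argument is self-contained and avoids invoking Kato's inequality at the stated regularity level ($\Delta u\in L^{6/5}$, which as you note is enough). Your optional $\varepsilon$-regularization $(u^2+\varepsilon)^{1/2}$ is a reasonable alternative if one wishes to avoid Kato. One small bookkeeping caveat in your Moser iteration: the stagewise constant grows polynomially in $p$ (roughly like $p^2$ from the Caccioppoli absorption and like $(R-r)^{-2}\sim 4^k$ from the cutoffs), so it is worth stating explicitly, as you implicitly do, that $\sum_k p_k^{-1}\log C_k<\infty$; with $p_k=2\cdot 3^k$ this is immediate, and the argument closes.
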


\begin{remark} The assumption on the regularity of $V$ can be significantly relaxed to $V\in L^q$ with $q>\frac{3}{2}$. For the sake of simplicity, we shall not prove the most general version. The proof is based on DeGiorgi-Nash iteration arguments.
\end{remark}

\begin{proof}Assume $u$ is not identically zero. By multiplying a positive constant to $u$, we can assume in addition that $\|u\|_{L^2(B_1)}=1$. $u$ still satisfies equation (\ref{eq:ellipticapp}) with possibly a different $\lambda$ in the equation. Fixing an $M>1$ to be determined later, we need to show $\|u\|_{L^{\infty}(B_{\frac{1}{2}})}\leq M$. For each integer $k\ge 1$, we define 
\begin{eqnarray*}
&&r_k=\frac{1}{2}+\frac{1}{2^k};\\
&&c_k=(1-\frac{1}{2^k})M.
\end{eqnarray*}
For each $k\ge 2$, we fix a smooth nonnegative cutoff function $\eta_k$ such that $\eta_k|_{B_{r_k}}\equiv 1$ and ${\rm supp}\,\eta_k\Subset B_{\frac{r_k+r_{k-1}}{2}}$, with $0\leq \eta_k\leq 1$ and $|\nabla \eta_k|\leq C2^k$. By a sign change, it suffices to prove that $u(x)\leq M$ for $x\in B_{\frac{1}{2}}$. Multiplying equation (\ref{eq:ellipticapp}) with $(u-c_k)_+\eta_k$ and integrating, we get that
\begin{eqnarray*}
&&\int_{B_1}\nabla (u-c_k)_+\nabla\left((u-c_k)_+\eta_k\right)\,dx-\int_{B_1}V(u-c_k)_+\cdot(u-c_k)_+\eta_k\,dx\\
&&\quad\quad-\int_{B_1}c_kV(u-c_k)_+\eta_k\,dx+\lambda\int_{B_1}u^5 (u-c_k)_+\eta_k\,dx=0,
\end{eqnarray*}
which implies that
\begin{eqnarray*}
&&\int_{B_{r_k}}|\nabla (u-c_k)_+|^2+(u-c_k)_+^2\,dx\\
&&\leq C(\|V\|_{L^{\infty}(B_1)}+1)\int_{B_{\frac{r_k+r_{k-1}}{2}}}(u-c_k)_+^2\,dx+\\
&&\quad\quad+\|V\|_{L^{\infty}(B_1)}c_k\int_{B_{\frac{r_k+r_{k-1}}{2}}}(u-c_k)_+\,dx+C4^k\int_{B_{\frac{r_k+r_{k-1}}{2}}}(u-c_k)_+^2\,dx.
\end{eqnarray*}
Thus,
\begin{eqnarray*}
&&\left(\int_{B_{r_k}}(u-c_k)_+^6\,dx\right)^{\frac{1}{3}}\\
&&\leq C(\|V\|_{L^{\infty}(B_1)}+1)4^k\int_{B_{r_{k-1}}}(u-c_{k-1})_+^2\,dx+\\
&&\quad+\|V\|_{L^{\infty}}c_k\left(\int_{B_{\frac{r_k+r_{k-1}}{2}}}(u-c_k)_+^2\,dx\right)^{\frac{1}{2}}\left|\{x\in B_{\frac{r_k+r_{k-1}}{2}}:\,u(x)>c_k\}\right|^{\frac{1}{2}}.
\end{eqnarray*}
Note that by Chebyshev inequality we have
\begin{eqnarray*}
\left|\{x\in B_{\frac{r_k+r_{k-1}}{2}}:\,u(x)>c_k\}\right|\leq \frac{1}{(c_k-c_{k-1})^2}\int_{B_{r_{k-1}}}(u-c_{k-1})_+^2\,dx.
\end{eqnarray*}
Hence, we get that
\begin{eqnarray*}
&&\left(\int_{B_{r_k}}(u-c_k)_+^6\,dx\right)^{\frac{1}{3}}\\
&&\leq C(\|V\|_{L^{\infty}(B_1)}+1)4^k\int_{B_{r_{k-1}}}(u-c_{k-1})_+^2\,dx+\\
&&\quad+\|V\|_{L^{\infty}}\frac{c_k}{c_k-c_{k-1}}\int_{B_{r_{k-1}}}(u-c_{k-1})_+^2\,dx\\
&&\leq C(\|V\|_{L^{\infty}(B_1)}+1)4^k\int_{B_{r_{k-1}}}(u-c_{k-1})_+^2\,dx.
\end{eqnarray*}
Note that
\begin{equation*}
\int_{B_{r_k}}(u-c_k)_+^6\,dx\ge \int_{B_{r_{k+1}}}(u-c_{k+1})_+^2\cdot(c_{k+1}-c_k)^4\,dx.
\end{equation*}
Summarizing the above inequalities, we obtain
\begin{equation}\label{eq:iterationinequality}
\left(\int_{B_{r_{k+1}}}(u-c_{k+1})_+^2\,dx\right)^{\frac{1}{3}}\leq C(\|V\|_{L^{\infty}(B_1)}+1)2^{4k}\int_{B_{r_{k-1}}}(u-c_{k-1})_+^2\,dx, \quad {\rm for\,\,all\,\,}k\ge 2.
\end{equation}
Denote 
\begin{equation}
\epsilon_k:=\left(\int_{B_{r_{k}}}(u-c_{k})_+^2\,dx\right)^{\frac{1}{3}},
\end{equation}
then equation (\ref{eq:iterationinequality}) can be written as
\begin{equation}\label{eq:iteapp}
\epsilon_{k+1}\leq C_116^{k}\epsilon_{k-1}^3\leq (C_116)^{k}\epsilon_{k-1}^3,
\end{equation}
where we have suppressed the dependence of $C_1>1$ on $V$.
Recall that a routine energy inequality implies $\|\nabla u\|_{L^2(B_{\frac{3}{4}})}\leq C\|u\|_{L^2(B_1)}=C$. Hence, if we choose $M$ sufficiently large, by the  H\"older and Chebyshev inequalities, we can assume $\epsilon_2,\,\epsilon_3$ satisfy 
\begin{equation}
\epsilon_2,\,\epsilon_3\leq C\frac{1}{M^{\frac{1}{3}}}\leq \frac{1}{(48C_1)^{6}}.
\end{equation}
Then from the iterative inequality (\ref{eq:iteapp}) we can prove by induction that $\epsilon_k\leq \frac{1}{(48C_1)^{k+3}}$ for all $k\ge 2$.
Hence we have
\begin{equation}
\lim_{k\to\infty}\int_{B_{r_k}}(u-c_k)_+^2\,dx=0.
\end{equation}
Note that $r_k\to \frac{1}{2}$ and $c_k\to M$ as $k\to\infty$, thus we conclude $u(x)\leq M$ for $x\in B_{\frac{1}{2}}$. The lemma is proved.\end{proof}

Now we are ready to prove the main result on the regularity and decay of steady states to equation (\ref{eq:mainequation}).

\begin{theorem}
Let $V\in Y$. Suppose $u\in \dot{H}^1(\R^3)\cap L^6(\R^3)$ is a steady state solution to equation (\ref{eq:mainequation}), i.e., $u$ solves
\begin{equation}\label{eq:ellipticalequationapp}
-\Delta u-Vu+u^5=0\quad{\rm in}\,\,\R^3,
\end{equation}
in the sense of distributions. Then $u\in W^{2,p}_{{\rm loc}}(\R^3)$ for any $1<p<\infty$, and
\begin{equation}\label{eq:decayapp}
|u(x)|\leq \frac{C}{(1+|x|)}, \quad{x\in \R^3.}
\end{equation}
\end{theorem}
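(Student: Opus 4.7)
The plan breaks into three stages. First, for regularity and a global $L^{\infty}$ bound, I would apply Lemma~\ref{lm:ellipticlemma} centered at an arbitrary point $x_{0}\in\R^{3}$. After translation, $u$ solves the same equation with potential $V(x_{0}+\cdot)$, so $\|u\|_{L^{\infty}(B_{1/2}(x_{0}))}\le C\|u\|_{L^{2}(B_{1}(x_{0}))}$, and since $V\in Y$ is globally bounded the constant $C$ can be chosen uniform in $x_{0}$. H\"older together with $u\in L^{6}(\R^{3})$ then gives $u\in L^{\infty}(\R^{3})$; the right-hand side of $-\Delta u=Vu-u^{5}$ lies in $L^{\infty}_{\mathrm{loc}}$, so Calder\'on--Zygmund yields $u\in W^{2,p}_{\mathrm{loc}}$ for every $p<\infty$. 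The same local inequality also gives $|u(x_{0})|\le C\|u\|_{L^{6}(B_{1}(x_{0}))}\to 0$ as $|x_{0}|\to\infty$ by absolute continuity of the $L^{6}$-norm, which is a prerequisite for the decay rate.

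Since $u\in\dot{H}^{1}(\R^{3})$ vanishes at infinity and $f:=Vu-u^{5}$ lies in $L^{6/5}+L^{\infty}$ with pointwise decay, standard elliptic theory identifies $u$ with the Newtonian potential of $f$:
\[
u(x)=\frac{1}{4\pi}\int_{\R^{3}}\frac{V(y)u(y)-u(y)^{5}}{|x-y|}\,dy.
\]
I would then bootstrap on the weighted sup-norm $\|u\|_{\alpha}:=\sup_{x}(1+|x|)^{\alpha}|u(x)|$. Assuming $\|u\|_{\alpha}<\infty$ for some $\alpha\in[0,1)$, one has $|f(y)|\lesssim(1+|y|)^{-(\alpha+\beta)}+(1+|y|)^{-5\alpha}$, and combined with the standard estimate
\[
\int_{\R^{3}}\frac{(1+|y|)^{-\eta}}{|x-y|}\,dy\le C(1+|x|)^{-\min(1,\,\eta-2)}\qquad(\eta>2),
\]
obtained by splitting into $\{|y|\le|x|/2\}$, $\{|x|/2<|y|<2|x|\}$, $\{|y|\ge 2|x|\}$, this raises the decay of $u$ to $\alpha':=\min(1,\alpha+\beta-2)$. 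Since $\beta>2$, finitely many iterations terminate at $\alpha=1$.

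The hard part is seeding the bootstrap at $\alpha=0$, because then $|u(y)|^{4}$ lacks quantitative decay and the quintic term in $f$ is not yet an integrable source. The key observation is that although $|u|^{4}$ does not decay with a rate, it is globally small at infinity: for any prescribed $\epsilon>0$ one can choose $R$ so that $\|u\|_{L^{\infty}(|y|\ge R)}^{4}<\epsilon$. I would then split the Newtonian integral into $|y|\le R$ (where $f$ is compactly supported and bounded, contributing $C_{R}/|x|$ for large $|x|$) and $|y|\ge R$ (where $|f(y)|\le[\|V\|_{Y}(1+|y|)^{-\beta}+\epsilon]|u(y)|$), and on the outer region apply a Neumann-series / contraction argument exploiting the smallness of the associated integral operator on a weighted $L^{\infty}$ space. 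This produces an initial positive decay rate $\alpha_{1}>0$ from which the main bootstrap can proceed, delivering the claimed bound $|u(x)|\le C/(1+|x|)$.
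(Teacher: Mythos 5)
Your first stage (translation-invariant use of Lemma~\ref{lm:ellipticlemma}, global $L^{\infty}$, $W^{2,p}_{\rm loc}$, and $u(x)\to 0$ at infinity) is correct and essentially matches the paper, and the Newtonian-potential representation is also legitimate since $f=Vu-u^5\in L^{6/5}(\R^3)\hookrightarrow\dot H^{-1}$ and $u\in\dot H^1$. The trouble is in the two analytic claims you lean on to produce the decay rate.

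First, your bootstrap formula $\alpha'=\min(1,\alpha+\beta-2)$ silently drops the quintic contribution. Accounting for $u^{5}$ gives $\alpha'=\min\bigl(1,\alpha+\beta-2,5\alpha-2\bigr)$, and the third entry only improves $\alpha$ when $\alpha>1/2$; for $\alpha\le 2/5$ it is not even positive. Hence even if you had a valid seed at some small $\alpha_1>0$, the iteration would stall below $1/2$ rather than terminate at $1$.

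Second, and more seriously, the seeding contraction as described cannot work. Writing $u=g+Tu$ with $(Tw)(x)=\frac{1}{4\pi}\int_{|y|\ge R}\frac{K(y)w(y)}{|x-y|}\,dy$ and $K=V-u^{4}$, the smallness $\|u^{4}\|_{L^{\infty}(|y|\ge R)}<\epsilon$ does not control the integral: since $u^{4}$ has no algebraic decay rate, the part of $K$ coming from $u^{4}$ is merely $O(\epsilon)$ with no weight, and for any $\alpha_1<2$
\[
\int_{|y|\ge R}\frac{(1+|y|)^{-\alpha_1}}{|x-y|}\,dy=+\infty,
\]
so $T$ is not even a bounded operator on $L^{\infty}_{\alpha_1}$. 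Smallness of the coefficient cannot compensate for a divergent kernel, so there is no Neumann series to sum. In addition, even granting a contraction that constructs some decaying solution $\tilde u$ of the outer integral equation, you would still need to identify $\tilde u$ with $u$, which requires a separate uniqueness argument (you cannot a priori place $u$ in the small weighted-$L^{\infty}$ ball).

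The paper sidesteps all of this with a rescaling argument. First, applying Lemma~\ref{lm:ellipticlemma} to $v(x)=R^{1/2}u(Rx+x_0)$ with $|x_0|=2R$ (so that $V_{R,x_0}=R^{2}V(R\cdot+x_0)$ is uniformly bounded) yields the quantitative intermediate decay $|u(x)|=o\bigl(|x|^{-1/2}\bigr)$, which your translation argument alone does not give. Then it rescales again, $v(x)=R^{1/2}u(Rx)$, so that the potential $V_R=R^{2}V(R\cdot)$ is \emph{small in the $Y$-norm on $B_1^{c}$} (by $\beta>2$) and the boundary trace $v|_{\partial B_1}$ is small (by the $|x|^{-1/2}$ decay). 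A perturbation/contraction is then run directly at the endpoint weight $(1+|x|)^{-1}$ for the exterior Dirichlet problem. At this weight, $u^{5}$ produces the \emph{source} $O\bigl(|y|^{-5}\bigr)$, which is integrable against $1/|x-y|$ on $B_1^{c}$, and the smallness that closes the contraction comes from the rescaled data, not from the decay of $u^{4}$. Finally the constructed decaying solution $\tilde v$ is identified with $v$ by uniqueness of small $\dot H^{1}$ solutions to the exterior problem, for which one has control because $\|\nabla u\|_{L^{2}(|y|\ge R)}\to 0$. To repair your argument you would need to import this rescaling and work directly at weight $1$ rather than try to bootstrap from $\alpha=0$.
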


\begin{proof} By Lemma \ref{lm:ellipticlemma}, we see $u\in L^{\infty}(\R^3)$. Then the $W^{2,p}_{{\rm loc}}$ estimates follow from standard elliptic regularity theory. Let us now turn to the proof of the decay estimate (\ref{eq:decayapp}). For any $2R=|x_0|>2$, set $v(x):=R^{\frac{1}{2}}u(Rx+x_0)$. Then $v$ solves 
\begin{equation*}
-\Delta u-V_{R,x_0}u+u^5=0,\quad {\rm in}\,\,B_1,
\end{equation*}
where $V_{R,x_0}=R^2V(Rx+x_0)$. Since $V\in Y$ with $\sup\limits_{x\in \R^3}(1+|x|)^{\beta}|V(x)|<\infty$ for some $\beta>2$, we see that $\|V_{R,x_0}\|_{L^{\infty}(B_1)}\leq \|V\|_Y$. Thus by Lemma \ref{lm:ellipticlemma}, 
\begin{equation*}
|v(0)|\leq \|v\|_{L^2(B_1)}\leq C\|v\|_{L^6(B_1)}\leq C\|u\|_{L^6(\{|x|\ge R/2\})}. 
\end{equation*}
Hence by rescaling and the fact that $x_0$ is arbitrary, we see that
\begin{equation*}
|u(x)|=o\left( \frac{1}{(1+|x|)^{\frac{1}{2}}}\right),\,\,\,{\rm as}\,\,|x|\to\infty.
\end{equation*}
Now choose $R>1$ sufficiently large, and set $v(x)=R^{\frac{1}{2}}u(Rx)$, then $v|_{\partial B_1}=R^{\frac{1}{2}}u|_{\partial B_R}$ is small, and $V_R:=R^2V(Rx)$ is small in the exterior of $B_1$, in the sense that
\begin{equation*}
\sup_{|x|\ge 1}|(1+|x|)^{\beta}V_R(x)|<\epsilon_R\to 0,\quad{\rm as}\,\,R\to\infty.
\end{equation*}
Then for sufficiently large $R$, by a standard perturbation argument one constructs a solution $\tilde{v}$ to the equation
\begin{equation}\label{eq:exteriorapp}
-\Delta \tilde{v}-V_R\tilde{v}+\tilde{v}^5=0,\quad {\rm in}\,\,B_R^c,
\end{equation}
with the boundary condition  $\tilde{v}|_{\partial B_1}=v|_{\partial B_1}$, with the estimates 
\begin{equation}\label{eq:preliminarydecayapp}
|\tilde{v}(x)|\leq \frac{c_R}{(1+|x|)},\quad{\rm for}\,\,|x|\ge 1.
\end{equation}
Since both $v$ and $\tilde{v}$ are small solutions in $\dot{H}^1$ to equation (\ref{eq:exteriorapp}) with the same boundary condition, we conclude $v=\tilde{v}$. Hence $v$ also satisfies (\ref{eq:preliminarydecayapp}). Scaling back to $u$, we see that $u$ satisfies
\begin{equation}
|u(x)|\leq\frac{C}{(1+|x|)}, \quad{\rm for}\,\,x\in \R^3.
\end{equation}
The theorem is proved.\end{proof}
\end{section}

\begin{section}{Appendix B: Endpoint Radial Strichartz Estimate}
In this appendix, we  give a proof of Theorem~\ref{Strichartz} in the radial setting. 
We only need to  prove the endpoint case, since other cases are known from~\cite{GV}. In the case of the homogeneous equation, Klainerman and Machedon~\cite{KM} first observed that the endpoint case $(q, r)=(2,\infty)$ holds true for data of the form $(v(0), v_t(0)) = (0, g)$ for radial function $g$. Here we extend the result to the inhomogeneous equation with general radial data. 
\begin{theorem}
Let $v$ be a finite energy solution to the 3d wave equation 
\[ (\partial_{tt}-\Delta) v =F\]
with initial data $(v(t_0), \partial_t v(t_0))= (f, g)\in \hl(\R^3)$ and assume that $f, g, F$ are radially symmetric. Then we have the estimate
\begin{equation}\label{Strichartz-Appendix}
  \|v\|_{\st{2}{\infty} (\R\times \R^3)} \leq C(q,r) \left(\|(f,g)\|_{\hl}  +\|F\|_{\st{1}{2}(\R \times \R^3)}\right),
\end{equation}
\end{theorem}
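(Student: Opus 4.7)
The plan is to reduce the endpoint inhomogeneous estimate to the homogeneous endpoint estimates via Duhamel's formula and Minkowski's inequality, then to establish the two required homogeneous endpoint estimates by a one-dimensional reduction. By Duhamel,
\[
v(t)=\cos(t|\nabla|)f+\frac{\sin(t|\nabla|)}{|\nabla|}g+\int_0^t \frac{\sin((t-s)|\nabla|)}{|\nabla|}F(s)\,ds,
\]
and the forcing term is handled by Minkowski's integral inequality in $s$: for each fixed $s$, the function $t\mapsto \chi_{[s,\infty)}(t)\frac{\sin((t-s)|\nabla|)}{|\nabla|}F(s)$ is, up to time translation, the homogeneous sine evolution of $L^2$ data, whose $\st{2}{\infty}$ norm is $\lesssim \|F(s)\|_{L^2_x}$ by the endpoint estimate for such data; integrating in $s$ produces $\|F\|_{\st{1}{2}}$. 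The remaining task is to prove the two homogeneous bounds $\|\cos(t|\nabla|)f\|_{\st{2}{\infty}}\lesssim \|f\|_{\dot{H}^1}$ and $\|\sin(t|\nabla|)|\nabla|^{-1}g\|_{\st{2}{\infty}}\lesssim \|g\|_{L^2}$ for radial data.

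For both pieces I would set $w(t,r):=rv(t,r)$ and extend it oddly in $r$. Then $w$ solves the 1D wave equation $w_{tt}-w_{rr}=0$ on $\R\times\R$ with initial data $(F(r),0)$ in the cosine case and $(0,\widetilde g(r))$ in the sine case, where $F(s):=sf(|s|)$ and $\widetilde g(s):=sg(|s|)$ are the odd extensions of $rf$ and $rg$ respectively. Since $w(t,0)=0$ by oddness, $v(t,r)=r^{-1}\int_0^r \partial_r w(t,\sigma)\,d\sigma$, and hence $\sup_{r>0}|v(t,r)|$ is controlled by the average of $|\partial_r w(t,\cdot)|$ over $[0,r]$. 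Reading $\partial_r w$ off d'Alembert's formula, $v(t,r)$ becomes a sum of integral averages of $F'$ (respectively $\widetilde g$) over intervals of length $r$ with endpoint at $\pm t$, yielding the pointwise bounds
\[
\sup_{r>0}|v(t,r)|\lesssim M|F'|(t)+M|F'|(-t)\quad\text{and}\quad \sup_{r>0}|v(t,r)|\lesssim M|\widetilde g|(t)+M|\widetilde g|(-t)
\]
in the cosine and sine cases respectively, where $M$ denotes the one-sided Hardy-Littlewood maximal operator on $\R$.

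Taking $L^2_t$ norms and invoking $L^2(\R)$-boundedness of $M$ then reduces the problem to bounding the 1D data norms by the 3D data norms. A direct calculation yields $\|\widetilde g\|_{L^2(\R)}^2\simeq \|g\|_{L^2(\R^3)}^2$, while $F'(r)=f(|r|)+|r|f'(|r|)$ combined with the 1D Hardy inequality $\int_0^\infty f^2\,dr\lesssim \int_0^\infty r^2 (f')^2\,dr$ (valid for radial $\dot{H}^1$ functions) gives $\|F'\|_{L^2(\R)}\lesssim \|f\|_{\dot{H}^1(\R^3)}$. Combining these ingredients produces both homogeneous endpoint bounds, and the first paragraph then delivers the full inhomogeneous estimate.

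The principal technical point, present already in the Klainerman-Machedon argument for the $(0,g)$ case, is that radial $\dot{H}^1(\R^3)$ functions need not lie in $L^\infty_x$ near the origin, so a pointwise-in-time $L^\infty_x$ bound is out of reach. The $L^2_t$ averaging, through $L^2$-boundedness of the maximal function, is what rescues the estimate; and the 1D reduction is what converts the 3D singularity at the origin into the harmless boundary condition $w(t,0)=0$, which is exactly what allows $v(t,r)$ to be expressed as an average of $\partial_r w$.
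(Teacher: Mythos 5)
Your proof is correct, and in the cosine piece it takes a genuinely different route from the paper's. Both arguments use Duhamel plus Minkowski to reduce the inhomogeneous term to the sine evolution of $L^2$ data, and both control the sine piece by a maximal function; the real divergence is in the case $(f,0)$. The paper works with the explicit representation $v(t,r)=\frac{1}{r}\bigl[(r+t)f(r+t)-(t-r)f(|r-t|)\bigr]$, splits into $r<t$ versus $r>t$, treats the first regime as an average of $(\rho f)'$ over an interval centered at $t$, and is forced in the second regime to estimate the two boundary terms separately via two duality computations and a change of variables. You instead pass to $w=rv$ (solving the 1D wave equation on $\R$ after odd extension), use $w(t,0)=0$ to write $v(t,r)=\frac{1}{r}\int_0^r\partial_\sigma w(t,\sigma)\,d\sigma$, and read off from d'Alembert that this is a sum of two averages of $|F'|$ over intervals of length $r$ having $t$ and $-t$ as left endpoints, where $F$ is the odd extension of $\rho f(\rho)$. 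Since these intervals always have $\pm t$ as an endpoint regardless of whether $r\lessgtr t$, a single appeal to the $L^2$-boundedness of the (one-sided or noncentered) maximal function, followed by $\|F'\|_{L^2(\R)}\lesssim\|\nabla f\|_{L^2(\R^3)}$ via the 1D Hardy inequality, finishes the estimate. This eliminates the case split and both duality arguments, at the modest cost of tracking the point $-t$ as well as $t$, which is harmless after integrating in $t$ over $\R$. Your sine-case argument, using the same $\int_0^r\partial_\sigma w$ device, is also correct and again unifies the two regimes that the paper handles by monotonicity of the intervals. One small point worth making explicit when writing this up: the identity $v(t,r)=\frac{1}{r}\int_0^r\partial_\sigma w\,d\sigma$ uses absolute continuity of $w(t,\cdot)$, which holds since $F\in\dot{H}^1(\R)$ and $\widetilde{g}\in L^2(\R)$ under the stated hypotheses on $(f,g)$.
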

\begin{proof}
The solution takes the form
\[v(t,x) = \cos (|\nabla|t)f+\frac{1}{|\nabla|}\sin (|\nabla|t)g +\int_0^t\frac{\sin (|\nabla|(t-s))}{|\nabla|}F (s)ds. \]
Case 1: $f=0, F=0$, this was proved in~\cite{KM} using the explicit formula 
\[v(t,r)=\frac{c}{r}\int_{|t-r|}^{t+r}  g(\rho) \rho d\rho , \hspace{1cm} t>0.\] 
Denote $M(\cdot)$ to be the 1d maximal function defined as $$M(f)(x)=\sup_{x\in I} \frac{1}{|I|}\int_I |f|$$ for any  interval $I\subset \R$. Extending $g$ to be an even function defined on $\R$, we then have 
\begin{align*}\sup_{r>0}|v| &\leq C \sup_{r>0}\left|\frac{1}{r}\int_{|t-r|}^{t+r}  |g(\rho) \rho| d\rho\right| \leq CM[  g(\rho)\rho](t).\end{align*}
This is obvious for   $r<t$, while for $r>t$, notice that $[t-r, t+r]$ is a larger interval than $[r-t, t+r]$, hence the term in the middle is still bounded by maximal function.  

Using the Hardy-Littlewood maximal inequality~\cite{Stein}, we get that 
\[\|\sup_{r>0} |v| \|_{L^2_t(\R^{+})}\leq C \|M[g(\rho)\rho](t)\|_{L^2_t(\R^{+})}\leq C\|g(\rho)\rho\|_{L^2(\R)} = C\|g\|_{L^2_x(\R^3)}.\]
By time reversibility, we have the same estimate when integrating on $t\in (-\infty, 0]$.

We rewrite the estimate in the following form
\begin{equation}\left\|\frac{1}{|\nabla|}\sin (|\nabla|t)g\right\|_{\st{2}{\infty} (\R\times \R^3)} \lesssim \|g\|_{L^2(\R^3)}. \label{st-g}\end{equation}
Case 2: $f=g=0$. Using (\ref{st-g})
 and Minkowski inequality, we immediately get 
\begin{align*}\left\|\int_0^t \frac{\sin (|\nabla|(t-s))}{|\nabla|}F (s)ds\right\|_{\st{2}{\infty} (\R\times \R^3)} & \lesssim 
\int_0^\infty \left\|\chi_{[0,t]}(s)\frac{\sin (|\nabla|(t-s))}{|\nabla|}F (s)\right\|_{\st{2}{\infty} (\R\times \R^3)} ds\\
& \lesssim \int_0^\infty \|F(s)\|_{L^2(\R^3)}ds = \|F\|_{\st{1}{2}(\R \times \R^3)}.
\end{align*}
Case 3: $g=0, F=0$.  For simplicity, we prove our estimates for Schwartz function $f$, and the estimates for general $f\in \dot{H}^1_{rad}$ follow by approximation. 

In the radial case, we have the following explicit expression for $v$:
\begin{align}\label{v-formula}
v= \partial_t \frac{1}{r}\int_{|t-r|}^{r+t}f(\rho)\rho \,dt = \frac{1}{r}[f(r+t)(r+t) - f(|r-t|)(t-r)],\hspace{1cm} t>0.
\end{align}
As a first step we bound
\begin{align*}
\sup_{0<r<t}|v(t,r)|\leq \sup_{0<r<t} \frac{1}{r}\left|\int_{t-r}^{t+r} (\rho f(\rho))' d\rho\right|\lesssim M [(\rho f(\rho))'](t).
\end{align*}
Applying Hardy-Littlewood maximal inequality and Hardy's inequality~\cite{Stein}, we get 
\begin{align}\|\sup_{0<r<t} |v|\|_{L^2_t(\R^{+})}\lesssim&  \|f(\rho)+\rho f'(\rho)\|_{L^2(\R^{+})} \notag \\ 
\lesssim & \left\|\frac{f}{|x|}\right\|_{L^2(\R^3)}+ \|\nabla f\|_{L^2(\R^3)} \lesssim \|\nabla f\|_{L^2(\R^3)}. \label{r-small}
\end{align}
Next if $r>t$ we claim that 
\begin{align}
\|\sup_{\rho>t}|f(\rho)|\|_{L^2_t(\R^{+})}^2 \lesssim \int_0^\infty |f'(\rho)|^2\rho^2 d\rho. \label{star}
\end{align}
Dualizing (\ref{star}) we see that it is equivalent to 
\begin{align}
\left|  \iint_{\rho>t>0}\int_{\rho}^\infty f'(w)\,dw \,  h(t,\rho)\, d\rho\, dt \right| & = \left| \int_0^\infty w f'(w)\left[ \frac{1}{w} \iint_{w>\rho>t>0} h(t, \rho) \,d \rho \, dt \right ]d w\right| \notag \\ &\lesssim \|f'(w)w\|_{L^2_w(\R^{+})}\|h\|_{L^2_tL^1_\rho(\R^{+}\times \R)}.
\end{align}
So we need to  prove  that 
\[
\left\|\frac{1}{w}\iint_{w>\rho>t>0} h(t,\rho)\,d\rho \,dt\right\|_{L^2_{w}(\R^{+})}\lesssim \|h\|_{L^2_tL^1_\rho(\R^{+}\times \R)}, \label{dagger}
\]
by  change of variables  $t=w\tau$, we have
\begin{align*}
\left\|\iint_{w >\rho > w\tau>0} |h(w\tau, \rho)| \,d\rho \,d\tau \right\|_{L^2_w(\R^{+})}\lesssim&  \left\|\int_0^1 \int_0^\infty |h(w\tau, \rho)|\, d\rho \, d\tau\right\|_{L^2_w(\R^+)}\\
\lesssim & \int_0^1\left \|\int_0^\infty |h(t,\rho)| d\rho\right\|_{L^2_t(\R^{+}) }\frac{d \tau}{\sqrt{\tau}} \\ \lesssim &\|h\|_{L^2_tL^1_\rho(\R^{+}\times \R)}
\end{align*}
as we claimed. Thus we proved (\ref{star}) whence 
\begin{equation}\left\|\sup_{r>t>0}\frac{1}{r}|f(r+t)|(r+t)\right\|_{L^2_t(\R^+)}\lesssim \|\nabla f\|_{L^2_x}.\label{r-big1}\end{equation}
Next we will prove that 
\begin{equation}\left\|\sup_{r>t>0}\frac{1}{r}|f(r-t)|(r-t)\right\|_{L^2_t(\R^{+})}\lesssim \|\nabla f\|_{L^2_x}.\label{r-big2}\end{equation}
By the change of variables $r=t(\rho+1)$, this is equivalent to 
\[
\left\|\sup_{\rho>0}\frac{\rho}{1+\rho}|f(t\rho)|\right\|_{L^2_t(\R^{+})}\lesssim\left (\int_0^\infty |f'(\rho)|^2\rho^2 d\rho\right)^{\frac12}.
\]
By duality, this is equivalent to 
\[
\left|\int_0^\infty \int_0^\infty \frac{\rho}{1+\rho} \int_{t\rho}^\infty  f'(w)\, dw \, \,h(t,\rho)\,d\rho\, dt\right|\lesssim \|f'(w)w\|_{L^2_w(\R^+)} \|h\|_{L^2_tL^1_\rho(\R^+\times \R)},
\]
or 
\[
\left|\int_0^\infty  w  f'(w)  \left[\frac{1}{w}\iint_{w>t\rho>0}\frac{\rho}{1+\rho} h(t,\rho)\, d\rho \,dt\right] dw\right|\lesssim  \|f'(w)w\|_{L^2_w(\R^+)} \|h\|_{L^2_tL^1_\rho(\R^+\times \R)}.\]
So we only need to show that 
\[\left\|\frac{1}{w}\iint_{w>t\rho>0}\frac{\rho}{1+\rho} h(t,\rho)\, d\rho\, dt\right\|_{L^2_w(\R^+)} \lesssim \|h\|_{L^2_tL^1_\rho(\R^+\times \R)}.\]
By the change of variables $t=\tau w$
\begin{align*}
&\left\|\frac{1}{w}\iint_{w>t\rho}\frac{\rho}{1+\rho} h(t,\rho)\,d\rho\, dt\right\|_{L^2_w(\R^+)}\\
=& \left\| \iint_{1>\tau\rho}\frac{\rho}{1+\rho} h(\tau w,\rho)\,d\rho\, d\tau\right\|_{L^2_w(\R^+)}
\\
\lesssim& \left\|\int_1^\infty \int_0^{\frac{1}{\rho}} |h(\tau w,\rho)| \,d\tau \,d\rho\right\|_{L^2_w(\R^+)} + \left\|\int_0^1\int_0^{\frac{1}{\rho}} \rho |h(\tau w,\rho)|   \,d\tau\, d\rho\right \|_{L^2_w(\R^+)}\\
\lesssim & \left\|\int_0^1 \int_0^{\infty} |h(\tau w,\rho)|  \,d\rho \,d\tau\right\|_{L^2_w(\R^+)} +\left\|\int_0^\infty \int_0^{\min(\frac{1}{\tau}, 1)}\rho |h(\tau w,\rho)|\,d\rho \,d\tau \right\|_{L^2_w(\R^+)}\\
\lesssim &\int_0^1 \|h\|_{L^2_tL^1_\rho(\R^+\times \R)} \frac{d\tau}{\sqrt{\tau}}+\left \|\int_0^\infty \int_0^{\infty}\frac{1}{1+\tau} |h(\tau w,\rho)| \, d\rho \,d\tau\right \|_{L^2_w(\R^+)}\\
\lesssim & \|h\|_{L^2_tL^1_\rho(\R^+\times \R)}+\int_0^\infty \frac{1}{1+\tau}\tau^{-\frac12}d\tau \|h\|_{L^2_tL^1_\rho(\R^+\times \R)}
\\ \lesssim &\|h\|_{L^2_tL^1_\rho(\R^+\times \R)}
\end{align*}
Hence combining (\ref{v-formula}) and (\ref{r-big1}),  (\ref{r-big2}), we deduce
\[\|\sup_{r>t} |v|\|_{L^2_t(\R^+)}\lesssim  \|\nabla f\|_{L^2(\R^3)}. \]
Together with (\ref{r-small}) we have proved
\[\|v\|_{L^2_tL^\infty_x(\R^+\times \R^3)}\lesssim \|\nabla f\|_{L^2(\R^3)}. \]
By   time reversibility, we  get
\[\|v\|_{L^2_tL^\infty_x(\R\times \R^3)}\lesssim \|\nabla f\|_{L^2(\R^3)}\]
 and we are done. 
\end{proof}
 
\end{section}

\bigskip

\noindent
{\sl Acknowledgement:} The proof of the uniqueness of ground states up to a change of sign was communicated to us by Tianling Jin.

\end{document}